\documentclass{ws-m3as}
\usepackage[english]{babel}
\usepackage{comment}

\usepackage{amsmath,amssymb}
\usepackage{graphicx}
\usepackage{tikz}
\usetikzlibrary{calc}
\usepackage[dvipsnames]{xcolor}
\usepackage[colorlinks=true, allcolors=blue]{hyperref}

\usepackage{lineno}
\usepackage{mathtools}
\usepackage{pgfplots}
\pgfplotsset{compat=1.15}
\usepackage{mathrsfs}
\usetikzlibrary{arrows}

\DeclareFontFamily{OT1}{pzc}{}
\DeclareFontShape{OT1}{pzc}{m}{it}{<-> s * [1.10] pzcmi7t}{}
\DeclareMathAlphabet{\smallcal}{OT1}{pzc}{m}{it}
\newcommand{\physH}{\smallcal{h}}
\newcommand{\elemSet}{\mathcal{E}}

\usepackage{framed}
\usepackage{subfiles}

\newcommand{\neighbori}{\hat{I}}
\newcommand{\neighborj}{\hat{J}}

\newcommand{\tpprojector}{\widehat{\Pi}^{(r,r)}_{p,k,\mathbf Z}}

\newcommand{\tpprojectorS}{\widehat{\Pi}_{AS}}
\newcommand{\tpprojectorSph}{\Pi_{AS}}

\newcommand{\tpprojectorsimple}{\mathcal{Q}}
\newcommand{\tpprojectorsimpleI}[1]{\tpprojectorsimple_{#1}}

\newcommand{\edgeextensionS}[2]{E_{#1}^{(#2)}} 
\newcommand{\edgeextension}[3]{\edgeextensionS{#1}{#2}\left( #3 \right)}

\newcommand{\edgeprojectorS}[2]{\Pi_{#1}^{(#2)}}

\newcommand{\vardelta}{\delta}

\DeclareMathOperator*{\essinf}{ess.\hspace{-.2em}inf}

\newcommand{\citeRefs}[1]{Refs.~\refcite{#1}}
\newcommand{\citeOneRef}[1]{Ref.~\refcite{#1}}
\newcommand{\citeRefDetail}[2]{Ref.~\refcite{#2}, #1}

\title{Robust approximation error estimates for analysis-suitable~$G^1$ isogeometric multi-patch discretizations}

\author{Fatima Hasanova}
\address{Johann Radon Institute for Computational and Applied Mathematics,\\
Austrian Academy of Sciences,\\
Altenberger Str. 69, 4040 Linz, Austria\\
fatima.hasanova@ricam.oeaw.ac.at}

\author{Stefan Takacs}

\address{Johannes Kepler University,\\
Altenberger Str. 69, 4040 Linz, Austria\\
stefan.takacs@numa.uni-linz.ac.at}

\author{Thomas Takacs}

\address{Johann Radon Institute for Computational and Applied Mathematics,\\
Austrian Academy of Sciences,\\
Altenberger Str. 69, 4040 Linz, Austria\\
thomas.takacs@ricam.oeaw.ac.at}

\begin{document}
\maketitle

\begin{abstract}
We prove $p$-robust approximation error estimates for $H^2$-conforming isogeometric discretizations over planar multi-patch domains. Possible applications are fourth order boundary value problems, like the biharmonic equation or Kirchhoff–Love plates. Using Isogeometric Analysis, such conforming discretizations can be constructed effortlessly for the single-patch case. In order to obtain a globally $H^2$-conforming discretization in the multi-patch case, the functions must be $C^1$-smooth across the interfaces between the patches. To obtain optimal approximation properties, those $C^1$-smooth spaces must also reproduce splines of sufficiently high degree for traces and transversal derivatives at all patch interfaces. Such constructions are based on some assumptions on the geometry. We restrict ourselves to the class of analysis-suitable $G^1$ (AS-$G^1$) multi-patch domains, which is the subset of $C^0$-matching multi-patch domains that allows the definition of spline spaces that yield the necessary reproduction properties without the need to locally increase the degree. While approximation error estimates have been established for single-patch and $C^0$ isogeometric multi-patch spaces, corresponding results for the $C^1$ multi-patch setting have been missing. The resulting bounds on the approximation error depend on the geometry parameterization and on the Sobolev regularity of the target function, but are independent of the spline degree $p$.
\end{abstract}
\keywords{
$H^2$-conforming discretizations;
multi-patch Isogeometric Analysis; 
approximation error estimates; 
analysis-suitable $G^1$ parameterizations.
}
\ccode{AMS Subject Classification: 
41A15, 
65D07, 
65N15, 
65N30, 
74S22 
}

\section{Introduction}\label{sec:Intro}

We provide approximation error estimates for $C^1$-smooth isogeometric multi-patch discretizations. Since the resulting function spaces are $H^2$-conforming, possible applications are conforming Galerkin discretizations of fourth-order partial differential equations, such as the biharmonic equation, Kirchhoff--Love plates and shells, cf. \citeRefs{kiendl2009isogeometric,kiendl2010bending} and phase-field formulations like the Cahn--Hilliard equation, cf. \citeOneRef{gomez2008isogeometric}. Since Isogeometric Analysis (IGA), cf. \citeRefs{hughes2005isogeometric,cottrell2009isogeometric}, uses spline-based constructions, arbitrarily smooth spaces can be constructed effortlessly. Usually, non-trivial geometries cannot be represented using a single smooth parameterization as required for (single-patch) IGA. One of the most common generalizations is multi-patch IGA, where the computational domain is subdivided into separately parameterized patches, over which function spaces can be defined.

Following this framework, we consider a $C^0$-conforming multi-patch isogeometric space and study its $C^1$-smooth subspace. The $C^1$-smoothness of an isogeometric function is equivalent to the geometric continuity of its graph, cf. \citeOneRef{groisser2015matched}. Consequently, the $C^1$ subspace can be characterized algebraically by these continuity conditions, as done in \citeOneRef{mourrain2016dimension}. While the dimension and the basis of a $C^0$ multi-patch isogeometric space of degree $p$ and patch-internal smoothness $k$ over a $C^0$-matching multi-patch parameterization are well-understood and independent of the parameterization of the geometry, the dimension of the $C^1$-smooth subspace depends in a non-trivial and non-robust way on the parameterization.

In this paper, we use the concept of analysis-suitable $G^1$ (AS-$G^1$) smoothness, which was introduced in \citeOneRef{CST16} and allows one to define a $C^1$-smooth space of parameterization-independent dimension. Related approaches were established in parallel in \citeRefs{kapl2015isogeometric,nguyen2016c1} and \refcite{bercovier2017smooth}; the latter is based on \citeOneRef{matskewich2001construction}. Restricting the geometry to such AS-$G^1$ parameterizations is not a significant limitation, since all planar domains that have a boundary composed of spline curves possess an AS-$G^1$ parameterization, see \citeOneRef{kapl2018construction}. Moreover, the full $C^1$-smooth subspace of a generic $C^0$ multi-patch space generally does not reproduce splines of sufficiently high degree for traces and/or crossing derivatives along patch interfaces and therefore lacks optimal approximation properties unless the AS-$G^1$ condition is satisfied. While dimension analysis, basis constructions, and AS-$G^1$ reparameterization strategies have already been developed in \citeRefs{Kapl2017,kapl2019isogeometric,kapl2018construction}, repsectively, and numerical evidence suggests optimal convergence, theoretical approximation results have been missing.

We fill this gap in the theory by proving robust approximation error estimates for AS-$G^1$ isogeometric multi-patch discretizations.
For the single-patch case, the first approximation error estimate which is robust in the degree $p$ and spline regularity $k$, was developed in \citeOneRef{beirao2011some} for the case $p\geq 2k-1$. Since then, general $p$-robust estimates have been developed for univariate and tensor-product B-splines in \citeRefs{TT16,floater2017optimal,floater2019optimal,S19,bressan2019approximation,sande2022ritz} and extended to $C^0$ multi-patch splines in \citeOneRef{T18}. In a finite element context, related estimates for (bilinear) $C^1$-elements were established in \citeOneRef{Kapl2021}. We extend these results to the AS-$G^1$ multi-patch setting, for which we construct a global projector. The global projector is defined patch-wise. On each patch, it is defined as a modified tensor-product projector built from $H^2$-orthogonal univariate projectors. The modification is done by introducing correction terms that yield matching traces and directional derivatives along all interfaces. This projector yields error estimates that are robust in the sense that the constants depend on the geometry parameterization and on the Sobolev regularity, but are independent of the spline degree~$p$. All bounds are given in terms of classical and bent Sobolev norms.

The theory developed in this paper applies directly to the standard AS-$G^1$ setting as defined in \citeOneRef{CST16} and as used in, e.g., \citeRefs{kapl2015isogeometric,Kapl2017isogeometric,Kapl2017,kapl2018construction,kapl2019smai,kapl2019isogeometric}. However, the methods presented here are also relevant to a wider range of related approaches with varying degrees of modification. In \citeOneRef{benvenuti2023isogeometric}, the same patch-wise spaces are considered and coupling is performed using mortaring, where parts of the theory can be applied. Our theory applies with minor modifications to constructions over multi-patch surfaces, cf. \citeRefs{Farahat2023a,Farahat2023b,Farahat2024}, where Sobolev regularity of functions over surfaces must be defined in such a way that the patch-wise pullbacks possess the same regularity properties as in the planar case. Furthermore, the projector can be modified to yield local error estimates, which are needed for adaptivity, as explored in \citeRefs{bracco2020isogeometric,Bracco2023,Bracco2024}, where hierarchical constructions were employed.

For approaches that require higher regularity, such as  \citeRefs{kapl2017space,kapl2017spaceB,kapl2018dimension,kapl2019solving,kapl2020isogeometric,Kapl2021c_adv,kapl2024isogeometric,Kapl2026isogeometric,Kapl2026}, a similar projector can be defined using higher order correction terms. The framework developed in this paper may also be extended to $C^1$-spaces over mixed triangular and quadrilateral B\'ezier meshes, as in \citeRefs{grovselj2020super,Groselj2024}, to $C^1$-spaces over volumetric domains, as in \citeRefs{birner2018approximation,birner2018bases,birner2019space,kapl2022c1}, or to strong $C^1$-coupling strategies over multi-patch domains, as in \citeRefs{chan2018isogeometric,Chan2019strong}. Related isogeometric discretizations based on $G^1$ splines, cf. \citeRefs{nguyen2016c1,toshniwal2017smooth,blidia2017g1,blidia2020geometrically,marsala2022g1,marsala2024g1,marsala2024cad}, on approximate $C^1$ splines as in \citeRefs{Weinmuller2021,Weinmuller2022,seiler2018approximately} or on almost-$C^1$ splines over unstructured meshes as in \citeRefs{Takacs2023,wen2023isogeometric} may also be analyzed using similar approaches. Additionally, our analysis may also be relevant to specific formulations for Kirchhoff plate models, cf. \citeRefs{greco2019quadrilateral,horger2019hybrid}. 
Recently, various smooth and approximately smooth basis constructions were compared in detail in the two overview papers \citeOneRef{hughes2021smooth} and \citeOneRef{Verhelst2024}, highlighting their practical utility in computational mechanics.

While the presented approximation estimates yield optimal convergence rates in $h$, the fact that they are robust in $p$ allows the development of iterative solvers that exhibit convergence rates independent of the mesh size $h$ and the spline degree $p$. This includes robust multigrid methods for multi-patch discretizations of the biharmonic equation, cf. \citeOneRef{T18} on multigrid for second order PDEs over multi-patch domains and \citeOneRef{sogn2023multigrid} on multigrid for the biharmonic equation over a single patch, as well as domain decomposition techniques such as the Isogeometric Tearing and Interconnecting (IETI-DP) methods recently proposed for high-order problems over multi-patch geometries in \citeOneRef{Takacs2025isogeometric}.

The remainder of the paper is organized as follows. Section \ref{sec:AS-G1 discr} introduces the model problem, the AS-$G^1$ multi-patch parameterization and the corresponding isogeometric discretization spaces. Section \ref{sec:approx-error-estimates} is dedicated to the construction of the projector and the derivation of the approximation error estimates. We conclude the paper in Section \ref{sec:conclusions}.

\section{Analysis-suitable $G^1$ isogeometric multi-patch discretizations} \label{sec:AS-G1 discr}

In this section we introduce the model problem, the class of multi-patch geometries we consider, and the isogeometric spaces over those geometries. Specifically, those multi-patch geometries follow the definition of analysis-suitable $G^1$ smoothness as introduced in \citeOneRef{CST16}.

\subsection{Model problem}\label{subsec:model-problem}

The application that motivated the definition and development of analysis-suitable $G^1$ discretizations is the conforming discretization of fourth order boundary value problems.
We consider the first biharmonic problem as the simplest model problem.
Let $\Omega\subset\mathbb R^2$ be an open, bounded and connected domain with Lipschitz boundary $\partial\Omega$. The first biharmonic problem reads as follows. Given a sufficiently smooth function $f:\Omega\to \mathbb R$,
find a sufficiently smooth function $u:\Omega\to \mathbb R$ such that
\begin{equation}\label{eq:model-problem-strong}
    \Delta^2 u = f \quad\mbox{in}\quad\Omega,
    \qquad u=\partial_n u= 0 \quad\mbox{on}\quad \partial\Omega,
\end{equation}
where $\partial_n$ denotes the directional derivative in direction of the normal vector.
The weak formulation of the problem is formulated in the Sobolev space $H^2_0(\Omega)$. Here and in what follows, we use standard Sobolev spaces as defined in Ref.~\refcite{Adams} or \refcite{Lions1972}. 
We denote by $L^2(\Omega)$ the Lebesgue space of square integrable functions, by $L^\infty(\Omega)$ the space of almost everywhere bounded functions, and by $H^k(\Omega)=W^{k,2}(\Omega)$ and $W^{k,\infty}(\Omega)$, $k\ge 0$ integer, the Sobolev space of $k$ times weakly differentiable functions, where those weak derivatives are in $L^2(\Omega)$ or $L^\infty(\Omega)$, respectively. Following this notation, we have $H^0(\Omega)=L^2(\Omega)$ and $W^{0,\infty}(\Omega)=L^\infty(\Omega)$. All of these spaces are equipped with the standard norms $\|\cdot\|_{L^2(\Omega)}$, $\|\cdot\|_{L^\infty(\Omega)}$, $\|\cdot\|_{H^k(\Omega)}$, $\|\cdot\|_{W^{k,\infty}(\Omega)}$, semi norms $|\cdot|_{H^k(\Omega)}$ and the standard scalar product $(\cdot,\cdot)_{L^2(\Omega)}$. 
Restrictions of functions in those Sobolev spaces to boundaries and interfaces are to be interpreted in the sense of the trace operator. 
$H^k_0(\Omega)\subset H^k(\Omega)$ denotes all functions whose function values and derivatives up to order $k-1$ vanish on $\partial\Omega$.
The same notation is used for subdomains of $\Omega$ and (parts) of their boundary.

The variational formulation of~\eqref{eq:model-problem-strong} reads as follows. 
Given $f\in L^2(\Omega)$, find $u\in H^2_0(\Omega)$ such that
\begin{equation}\label{eq:model-problem}
    \underbrace{\int_\Omega \Delta u\, \Delta v \, \mathrm d x}_{\displaystyle a(u,v):=}
    =
    \underbrace{\int_\Omega f\, v \, \mathrm d x}_{\displaystyle (f,v)_{L^2(\Omega)} =}
    \quad
    \mbox{for all}
    \quad
    v \in H^2_0(\Omega).
\end{equation}
Since $\|\Delta u\|_{L^2(\Omega)} = |u|_{H^2(\Omega)}$ for all $u\in H^2_0(\Omega)$ (see \citeRefDetail{Chapter~VI, \S5}{Braess}) and using a standard Poincaré type inequality, we obtain that the bilinear form $a(\cdot,\cdot)$
is elliptic and bounded, i.e., there are constants $\mu_1,\mu_2>0$ such that
\begin{equation*}
    a(u,u)\ge \mu_1 \|u\|_{H^2(\Omega)}^2, \qquad
    a(u,v)\le \mu_2 \|u\|_{H^2(\Omega)}\|v\|_{H^2(\Omega)}
    \quad\mbox{for all}\quad u,v\in H^2_0(\Omega).
\end{equation*}
So, the theorem of Lax--Milgram (cf.~\citeRefDetail{Chapter~II, Theorem~2.5}{Braess}) guarantees existence and uniqueness of a solution to~\eqref{eq:model-problem}.

For discretization, any finite-dimensional subspace $\mathcal V_h \subset H^2_0(\Omega)$ can be chosen, like an analysis-suitable $G^1$ isogeometric multi-patch discretization, that we study in this paper. No matter which space was chosen, the Galerkin approximation is the function $u_h\in \mathcal V_h$ such that
\begin{equation*}
    a( u_h\,  v_h ) 
    =
    (f,v_h)_{L^2(\Omega)}
    \quad
    \mbox{for all}
    \quad
    v_h \in \mathcal V_h.
\end{equation*}
The discretization error can be estimated using Céa's lemma as
\begin{equation*}
    \|u-u_h\|_{H^2(\Omega)}
    \le
    \frac{\mu_2}{\mu_1} 
    \inf_{v_h \in \mathcal V_h} \|u-v_h\|_{H^2(\Omega)}.
\end{equation*}
The approximation error can be further estimated using error estimators for the quasi-interpolator that we construct in this paper. Depending on the smoothness of the solution and the chosen spline degrees, one would expect error estimates like
\begin{equation}\label{eq:hs bound}
    \|u-u_h\|_{H^2(\Omega)}
    \le
    C \physH^{s-2} \|u\|_{H^{s}(\Omega)}
\end{equation}
for some $s>2$ and some constant $C>0$. Here $\physH$ is the mesh size corresponding to $\mathcal V_h$. A formal definition is given in~\eqref{eq:phys h}.

In the following, we discuss how to construct the function space $\mathcal V_h$ using Isogeometric Analysis, starting with the representation of the geometry in the next subsection and the construction of the function space itself in the then following subsection.

\subsection{Representation of the geometry}\label{subsec:geometry}

We assume that $\Omega$ is composed of multiple non-overlapping open patches $\Omega^{(i)}$, $i \in \mathcal{I}_{\Omega}$, where the index set $\mathcal I_\Omega$ is a finite set of integers, such that
\begin{equation*}
    \overline\Omega = \bigcup_{i\in \mathcal{I}_{\Omega}} \overline{\Omega^{(i)}},
\end{equation*}
with each patch parameterized via a bijective mapping
\begin{equation*}
\mathbf{G}^{(i)} : \widehat \Omega \to \overline{\Omega^{(i)}},
\quad \mbox{with} \quad \widehat \Omega := [0,1]^2.
\end{equation*}
Here the line above the symbol of a set, like $\overline{\Omega^{(i)}}$, denotes the closure of the respective set. We call the collection $\mathbf G := (\mathbf{G}^{(i)})_{i\in\mathcal{I}_\Omega}$ satisfying these conditions a \emph{multi-patch parameterization} of the domain $\Omega$.

We call the parameterization \emph{$2$-regular} if there is a constant $\underline c$ such that
\begin{equation}\label{eq:lower-bound-Jacobi-det1}
\mathbf G^{(i)} \in W^{2,\infty}(\widehat \Omega)
\quad\mbox{and}\quad
\essinf_{\boldsymbol \xi \in \widehat \Omega}
\det\bigl[\, \nabla {\mathbf{G}^{(i)}}(\boldsymbol \xi) \,\bigr] \geq \underline{c}>0
\quad\mbox{for all}\quad i \in \mathcal I_\Omega,
\end{equation}
where $\nabla {\mathbf{G}^{(i)}}$ denotes the weak Jacobian and $\essinf$ the essential infimum.
Since $\widehat \Omega$ satisfies a strong local Lipschitz condition,
the Sobolev embedding theorem~\citeRefDetail{Theorem~4.12, Part II}{Adams} 
guarantees $\mathbf{G}^{(i)}\in C^1(\widehat\Omega)$.
Since this holds for the \emph{closed} domain $\widehat \Omega$, $\mathbf{G}^{(i)}$ and its derivatives are uniformly bounded.

Next, we discuss the relations of the patches in the overall multi-patch configuration. Let
\begin{equation*}
    \widehat{\mathbf{x}}_1:=(0,0),\;
    \widehat{\mathbf{x}}_2:=(1,0),\;
    \widehat{\mathbf{x}}_3:=(1,1),\;
    \widehat{\mathbf{x}}_4:=(0,1),
\end{equation*}
be the vertices of the parameter domain $\widehat\Omega$. We define the edges of the parameter domain such that each edge $\widehat\Sigma_j$, $j\in  \{1,2,3,4\}$, connects $\widehat{\mathbf{x}}_j$ and $\widehat{\mathbf{x}}_{j+1}$ ($\widehat{\mathbf{x}}_{5}:=\widehat{\mathbf{x}}_{1}$), i.e., having
\begin{equation*}
 \widehat\Sigma_1 := (0,1) \times \{0\},\;
 \widehat\Sigma_2 := \{1\} \times (0,1),\;
 \widehat\Sigma_3 := (0,1) \times \{1\},\;
 \widehat\Sigma_4 := \{0\} \times (0,1).
\end{equation*}
For each edge of the parameter domain $\widehat\Sigma_j$, let $\mathbf n_j:=(n_{j,1},n_{j,2})$ be the outward facing normal vector and $\mathbf t_j := (n_{j,2},-n_{j,1})$ be the tangential vector. The images of the edges and vertices of the parameter domain under $\mathbf G^{(i)}$ are the edges and vertices of the physical patch $\Omega^{(i)}$. 

The parameterization $\mathbf G$ is called \emph{geometrically conforming}, if the closure of $\Omega$ is the disjoint union of all patches, edges and vertices and the intersection of the boundaries of any two patches $\partial\Omega^{(i_1)}\cap \partial \Omega^{(i_2)}$, $i_1\ne i_2 \in \mathcal I_\Omega$, is either a common edge together with the adjacent vertices, a common vertex, or the empty set.

We say that $(i,j)\in\mathcal{I}_{\widehat \Sigma}:= \mathcal{I}_\Omega \times \{1,2,3,4\}$ is an interface, if it is the index of an interior edge $\mathbf{G}^{(i)}(\widehat\Sigma_{j}) \subset \Omega$. Otherwise, it is a boundary edge.
If $(i,j)$ is an interface, there exists exactly one other index $(\neighbori(i,j),\neighborj(i,j)) \in \mathcal{I}_{\widehat \Sigma}$ referring to the same edge, but indexed with respect to the neighboring patch. We say that the multi-patch parameterization \emph{agrees along the interface} if the mapping 
\begin{equation}\label{eq:C0-condition-geometry}
\begin{array}{lccl}
    e_{(i,j)} : &\widehat\Sigma_{j} &\rightarrow&\widehat\Sigma_{\neighborj(i,j)} \\[5pt]
    &\boldsymbol \xi &\mapsto& (\mathbf{G}^{(\neighbori(i,j))})^{-1}\mathbf{G}^{(i)}(\boldsymbol \xi),
\end{array}
\end{equation}
is bijective and affine linear. We then have
\begin{equation*}
    \mathbf{G}^{(i)} |_{\widehat\Sigma_{j}}
    = \mathbf{G}^{(\neighbori(i,j))} \circ e_{(i,j)}.
\end{equation*}

In order to construct an $H^1$-conforming space, we assume that the multi-patch parameterization is analysis-suitable $G^1$, in short AS-$G^1$, as introduced in \citeOneRef{CST16}. For completeness we give the definition below.

\begin{definition}\label{def:ASG1domainAsmpt}
    The parameterization $\mathbf G$ is called \emph{analysis-suitable \(G^1\)} (in short AS-$G^1$), 
    if it is a 
    $2$-regular and 
    geometrically conforming multi-patch parameterization which agrees along all interfaces and if, additionally, for all $(i,j)\in\mathcal{I}_{\widehat \Sigma}$ that are interfaces, i.e., $\mathbf G^{(i)}( \widehat\Sigma_{j})\subset \Omega$, 
    there exist linear functions 
    \begin{equation*}
        \alpha^{(i)}_j,\beta^{(i)}_j
            : [0,1] \to \mathbb{R},
    \end{equation*}
    with
    \begin{equation*}
        \alpha^{(i)}_j(\xi) > 0\quad \mbox{for all}\quad\xi\in[0,1],
    \end{equation*}
    called \emph{gluing data} such that
    \begin{equation}\label{eq:geometry-gluing-condition}
    \left(\mathbf{d}_{j}^{(i)}\cdot\nabla\mathbf{G}^{(i)} \right)|_{\widehat\Sigma_{j}}
    = -\left(\mathbf{d}_{\neighborj(i,j)}^{(\neighbori(i,j))}\cdot\nabla\mathbf{G}^{(\neighbori(i,j))}\right) \circ e_{(i,j)},
    \end{equation}
    holds for the directions 
    \begin{equation}\label{eq:partial d def}
        \mathbf{d}_j^{(i)}(\xi_1,\xi_2) := \frac{1}{\alpha_j^{(i)}(\xi_j)}\left(\mathbf{n}_j + \beta_j^{(i)}(\xi_j) \mathbf{t}_j \right),
    \end{equation}
    where $\xi_3:=\xi_1$ and $\xi_4:=\xi_2$.
    For completeness, for all edges $(i,j)\in \mathcal{I}_{\widehat \Sigma}$ that are boundary edges, i.e., $\mathbf G^{(i)}( \widehat\Sigma_{j})\subset \partial\Omega$, we set 
    \begin{equation*}
        \alpha^{(i)}_j\equiv1 \quad\mbox{and}\quad \beta^{(i)}_j\equiv0
        \quad\mbox{and obtain}\quad
        \mathbf{d}_j^{(i)} = \mathbf{n}_j.
    \end{equation*}
\end{definition}
When convenient, we treat the gluing data $\alpha_j^{(i)}$ and $\beta_j^{(i)}$ as functions defined on $\widehat\Omega$ as obtained using the canonical coordinate projections, like $\alpha_1^{(i)}(\xi_1,\xi_2) := \alpha_1^{(i)}(\xi_1)$.

The concept of AS-$G^1$ parameterizations relies on the geometric continuity of first order, cf. \citeOneRef{peters2002gc}, (in short $G^1$) of the graph surface of the isogeometric function, which is equivalent to the $C^1$-smoothness of the isogeometric function itself, cf. \citeOneRef{groisser2015matched}. 
Note that gluing data, i.e., functions satisfying~\eqref{eq:geometry-gluing-condition}--\eqref{eq:partial d def}, always exists but is not linear in general. Given an AS-$G^1$ parameterization $\mathbf G$, one can compute normalized gluing data as described in \ref{ap:comp-gluing}. 

In order to obtain an upper bound like in~\eqref{eq:hs bound} for some integer $s> 2$, we need additional regularity requirements on the geometry parameterization, beyond condition~\eqref{eq:lower-bound-Jacobi-det1}, as specified in Definitions~\ref{def:s regular as} and~\ref{def:constants}. We can now define isogeometric discretization spaces over analysis-suitable $G^1$ multi-patch domains.

\subsection{Isogeometric function spaces}\label{subsec:isogeometric-functions}

The space of spline functions of degree \( p \in \mathbb{N} \) and smoothness \( C^k \), with \( k < p \), defined over a partition \( Z = (z_0, \ldots, z_n) \) of the interval \([0,1]\), where
\begin{equation*}
0 = z_0 < z_1 < \cdots < z_n = 1,
\end{equation*}
is given by
\begin{equation*}
S_{p,k,Z} := \left\{ v \in C^k(0,1) : \left. v \right|_{[z_{\ell-1}, z_\ell)} \in \mathbb{P}^p \;\mbox{for all}\; \ell = 1, \ldots, n \right\},
\end{equation*}
where \(\mathbb{P}^p\) denotes the space of polynomials of degree at most \(p\), and \(C^k(0,1)\) is the space of all functions that are \(k\)-times continuously differentiable on \([0,1]\). Let
\begin{equation}\label{eq:partition}
    \elemSet_Z := \{(z_{\ell-1}, z_\ell)\}_{\ell=1,\ldots,n}
\end{equation}
denote the set of elements of the partition $Z$. We denote by $h_Z := \max_\ell (z_\ell - z_{\ell-1})$ the grid size, i.e., the size of the largest element in $\elemSet_Z$.

The tensor-product spline space corresponding to the partition \(\mathbf{Z}=(Z_1,Z_2)\) is defined as
\begin{equation*}
S_{p,k,\mathbf{Z}} := S_{p,k,Z_1} \otimes S_{p,k,Z_2}.
\end{equation*}
For convenience, we denote by $Z_3:=Z_1$ and $Z_4:=Z_2$ the partitions corresponding to the edges $\widehat \Sigma_3$ and $\widehat \Sigma_4$, respectively. 
The corresponding grid size is $h_{\mathbf Z}:=\max\{h_{Z_1},h_{Z_2}\}$.
The \(C^1\) isogeometric space is then defined by
\begin{equation}\label{eq:Vh1}
    \mathcal{V}^1_h := \left\{ u_h \in C^1(\overline{\Omega}) :  u_h \circ \mathbf{G}^{(i)} \in S_{p,k,\mathbf{Z}^{(i)}}\;\mbox{for all}\; i \in \mathcal{I}_\Omega \right\},
\end{equation}
where $\mathbf{Z}^{(i)}$ is the partition corresponding to the patch $\Omega^{(i)}$. We denote by $\elemSet_{\mathbf{Z}^{(i)}} := \elemSet_{Z^{(i)}_1} \times \elemSet_{Z^{(i)}_2}$ the set of elements of the mesh and define the mesh size corresponding to the isogeometric space $\mathcal{V}^1_h$ in the physical domain as
\begin{equation}\label{eq:phys h}
    \physH := \max_{i\in \mathcal{I}_\Omega} \max_{E\in \elemSet_{\mathbf{Z}^{(i)}}} \mbox{diam} \left(\mathbf{G}^{(i)}\left(E\right)\right),
\end{equation}
where $\mbox{diam} (\cdot)$ denotes the diameter of the element.

By definition, the isogeometric space satisfies \(\mathcal{V}^1_h \subset C^0(\overline{\Omega})\), which, for a function \(u_h \in \mathcal{V}^1_h\) with pre-images $\widehat u^{(i)}_h := u\circ \mathbf G^{(i)}$ and every $j\in  \{1,2,3,4\}$, such that \(\mathbf{G}^{(i)}(\widehat\Sigma_j)\) is an  interface, results in the condition
\begin{equation*}
    \widehat u^{(i)}_h |_{\widehat\Sigma_j} =\widehat u^{(\neighbori(i,j))}_h \circ e_{(i,j)},
\end{equation*}
where $e_{(i,j)}$ is the isometry defined in~\eqref{eq:C0-condition-geometry}. We assume that the spline spaces  \emph{match along all interfaces}, i.e.,
for all $(i,j)\in\mathcal{I}_{\widehat\Sigma}$ that are interfaces, we have
\begin{equation*}
    S_{p,k,\mathbf{Z}^{(i)}}\big|_{\widehat\Sigma}
    =
    S_{p,k,\mathbf{Z}^{(\neighbori(i,j))}}\circ e_{(i,j)}.
\end{equation*}
Since the assumption is symmetric, the converse is also true. This is the case if and only if the partitions along the interface match with
\begin{equation}\label{eq:edge-Z-def}
    Z^{(\neighbori(i,j))}_{\neighborj(i,j)} \in \left\{ Z^{(i)}_j ,\overline Z^{(i)}_j \right\},
\end{equation}
where $\overline Z = (1-z_n,\ldots,1-z_0)$ is the reverse of the partition $Z=(z_0,\ldots,z_n)$. The correct choice is determined by the direction of the embedding $e_{(i,j)}$, 
see Fig.~\ref{fig:geometric} for an example configuration. It is possible to assume only partially matching spaces, i.e., for each pair $(i,j)$ and $(\neighbori(i,j),\neighborj(i,j))$ one of the spaces is a proper subspace of the other. All results carry over with only minor modifications. To not further complicate the notation, we assume fully matching spaces at all interfaces.

\begin{figure}[h!]
    \centering
    \begin{tikzpicture}[scale=2.5]
        \tikzset{
            patch border/.style={draw, black, line width=0.8pt},
            knot line/.style={draw, darkgray, line width=0.3pt, dashed},
            axis line/.style={draw, black, ->, line width=0.5pt},
            knot circle/.style={fill=white, draw=black, line width=0.3pt, radius=0.5pt}
        }
        \begin{scope}[local bounding box=paramDomain]
            \def\vKnots{0.2, 0.5, 0.7}   
            \def\uKnotsLeft{0.7, 0.4}
            \def\uKnotsRight{0.5}
            \draw[patch border] (-1.1, 0) rectangle (-0.1, 1);
            \foreach \y in \vKnots {
                \draw[knot line] (-1.1, \y) -- (-0.1, \y);
            }
            \foreach \xShift in \uKnotsLeft {
                \draw[knot line] (-1.1 + \xShift, 0) -- (-1.1 + \xShift, 1);
            }
            \draw[patch border] (0.1, 0) rectangle (1.1, 1);
            \foreach \y in \vKnots {
                \draw[knot line] (0.1, \y) -- (1.1, \y);
            }
            \foreach \xShift in \uKnotsRight {
                \draw[knot line] (0.1 + \xShift, 0) -- (0.1 + \xShift, 1);
            }
            \draw[line width=0.5pt] (0, 0) -- (0, 1.0);
            \foreach \y in {0, 1} \draw[fill=white] (0,\y) circle [knot circle]; 
            \foreach \y in \vKnots \draw[fill=white] (0,\y) circle [knot circle]; 
            \draw[line width=0.5pt] (-0.1, -0.1) -- (-1.1, -0.1);
            \foreach \xShift in {0, 1} \draw[fill=white] (-1.1 + \xShift, -0.1) circle [knot circle];
            \foreach \xShift in \uKnotsLeft \draw[fill=white] (-1.1 + \xShift, -0.1) circle [knot circle];

            \draw[line width=0.5pt] (0.1, -0.1) -- (1.1, -0.1); 
            \foreach \xShift in {0, 1} \draw[fill=white] (0.1 + \xShift, -0.1) circle [knot circle]; 
            \foreach \xShift in \uKnotsRight \draw[fill=white] (0.1 + \xShift, -0.1) circle [knot circle]; 

            \draw[axis line] (-0.1, 0) -- (-0.1, 1.1);
            \draw[axis line] (0.1, 0) -- (0.1, 1.1);

            \draw[axis line] (-0.1, 0) -- (-1.2, 0);
            \draw[axis line] (0.1, 0) -- (1.2, 0);
            
            \node[align=center, font=\footnotesize] at (0.1, 1.2) {$\xi_2$};
            \node[align=center, font=\footnotesize] at (-0.1, 1.2) {$\xi_1$};

            \node[align=center, font=\footnotesize] at (1.3, 0.0) {$\xi_1$};
            \node[align=center, font=\footnotesize] at (-1.3, 0.0) {$\xi_2$};

            \node[font=\small] at (0.1, -0.25) {$Z^{(\neighbori(i,4))}_1=Z^{(i)}_4$};
            \node[font=\small] at (-0.7, -0.3) {$Z^{(\neighbori(i,4))}_4$};
            \node[font=\small] at (0.7, -0.3) {$Z^{(i)}_1$};
            
            \node[font=\small] at (-0.6, 1.15) {$\mathbf{Z}^{(\neighbori(i,4))}$};
            \node[font=\small] at (0.6, 1.15) {$\mathbf{Z}^{(i)}$};

        \end{scope}
       \node[inner sep=0pt, anchor=west] (pdf) at ([xshift=0.2cm] paramDomain.east)
        {\includegraphics[width=0.41\textwidth]{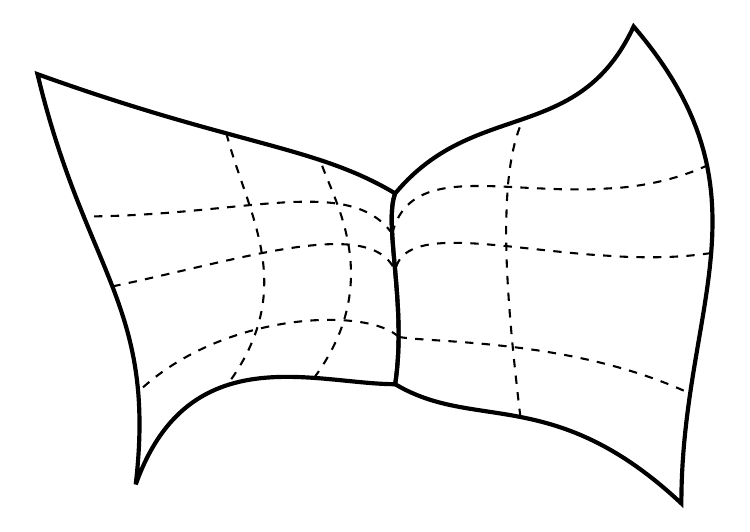}};
        \node[black, font=\large] at ($(pdf.west)!0.25!(pdf.east) + (0.15, 0.6)$) {$\Omega^{(\neighbori(i,4))}$};
        \node[black, font=\large] at ($(pdf.west)!0.75!(pdf.east) + (-0.05, 0.6)$) {$\Omega^{(i)}$};

        \draw[->, thick, black] ([xshift=-0.1cm]paramDomain.east) -- ([xshift=0.1cm]pdf.west);
    \end{tikzpicture}
    
    \caption{The parametric domain $\widehat{\Omega}^{(i)}$ and $\widehat{\Omega}^{(\hat{I}(i,4))}$ (left) are mapped via $\mathbf{G}^{(i)}$ and $\mathbf{G}^{(\hat{I}(i,4))}$ to the physical patches ${\Omega}^{(i)}$ and ${\Omega}^{(\hat{I}(i,4))}$ (right), respectively. Partitions $\mathbf{Z}^{(i)}=(Z_1^{(i)},Z_2^{(i)})$ and $\mathbf{Z}^{(\hat{I}(i,4))}=(Z_1^{(\hat{I}(i,4))},Z_2^{(\hat{I}(i,4))})$ match along the interface with $Z_2^{(i)}=Z_4^{(i)}=Z_1^{(\hat{I}(i,4))}$. The dashed lines represent the inner knots and the parameter directions are denoted by $\xi_1$ and $\xi_2$.}
    \label{fig:geometric}
\end{figure}

We define crossing directions $\mathfrak{d}^{(i)}_j:\Omega^{(i)} \to\mathbb{R}^2$ for all $(i,j)$ as
\begin{equation}\label{eq:directions}
    \mathfrak{d}^{(i)}_j\circ \mathbf{G}^{(i)} := \mathbf{d}_{j}^{(i)}\cdot\nabla\mathbf{G}^{(i)}.
\end{equation}
Consider, without loss of generality, the edge $\widehat \Sigma_1$. 
Following~\citeRefDetail{eq.~(29)}{CST16}, the Taylor expansion around $\xi_2=0$ is
\begin{equation*}
    u_h \circ \mathbf{G}^{(i)} (\xi_1,\xi_2) = g_0(\xi_1) + (\beta_1^{(i)}(\xi_1)g_0'(\xi_1)+\alpha_1^{(i)}(\xi_1)g_1(\xi_1))\xi_2 + O(\xi_2^2),
\end{equation*}
where 
\begin{equation*}
    g_0(\xi_1) := u_h \circ \mathbf{G}^{(i)}(\xi_1,0) \quad \text{and}\quad g_1(\xi_1):=\left( \mathfrak{d}^{(i)}_j\cdot\nabla u_h \right) \circ \mathbf{G}^{(i)}(\xi_1,0).
\end{equation*}
One can see that $g_0$ and $g_1$ have to be splines, cf.~\citeRefDetail{Theorem 1}{CST16}. Since $\beta_1^{(i)}g_0'+\alpha_1^{(i)}g_1 \in S_{p,k,{Z}^{(i)}_1}$, for generic linear gluing data, the maximal spaces that can be chosen independently are $S_{p,k+1,{Z}^{(i)}_1}$ for the trace $g_0$ and $S_{p-1,k,{Z}^{(i)}_1}$ for the directional derivative $g_1$. These spaces reduce to polynomials, if and only if $k\ge p-1$, which leads to $C^1$ locking, cf.~\citeRefDetail{Theorem 2}{CST16}. Thus, we assume $1\leq k \leq p-2$. Based on these observations, we define the AS-$G^1$ subspace of $\mathcal{V}^{1}_h$ as 
\begin{equation}\label{eq:V1 AS}
    \mathcal{V}^{1}_{AS} 
    :=  \left\{ u_h \in \mathcal{V}^1_h : 
    \begin{array}{ccll}
    u_h \circ \mathbf{G}^{(i)}|_{\widehat\Sigma_j} &\in& S_{p,k+1,{Z}^{(i)}_j} \\
    (\mathfrak{d}^{(i)}_j\cdot\nabla u_h ) \circ \mathbf{G}^{(i)}|_{\widehat\Sigma_j} &\in& S_{p-1,k,{Z}^{(i)}_j}
    \end{array}
    \;\mbox{for all}\; (i,j)\in \mathcal{I}_{\widehat\Sigma}
    \right\}.
\end{equation}
Following Definition~\ref{def:ASG1domainAsmpt}, we then have for all interfaces
\begin{equation*}
    \mathfrak{d}^{(i)}_j (\mathbf x) = -\mathfrak{d}^{(\neighbori(i,j))}_{\neighborj(i,j)} (\mathbf x) \quad\mbox{for all}\quad \mathbf{x} \in \mathbf{G}^{(i)}(\widehat\Sigma_j).
\end{equation*}
We moreover define the Argyris-like subspace as
\begin{equation}\label{eq:V1star}
    \mathcal{V}^{1}_{\mathcal{A}} 
    := \left\{ u_h \in \mathcal{V}^1_{AS} : \;
    u_h \in C^2(\mathbf{x}) \;\mbox{for all}\;\mathbf{x}=\mathbf{G}^{(i)}(\widehat{\mathbf{x}}_j)\;\mbox{with}\; (i,j)\in \mathcal{I}_{\widehat{\mathbf x}} 
    \right\},
\end{equation}
where $\mathcal{I}_{\widehat{\mathbf x}} := \mathcal{I}_\Omega \times\{1,2,3,4\}$ is the index set of all vertices. It employs $C^2$ super-smoothness at all vertices, similar to the Argyris finite element, cf. \citeOneRef{Argyris1968TUBA}, to obtain a space whose dimension is independent of the geometry.

\begin{remark}
Note that $\mathcal{V}^{1}_{AS} \cap H^2_{0} =\mathcal{V}^{1}_h \cap H^2_{0}$, if and only if, for all $(i,j)\in \mathcal{I}_{\widehat\Sigma}$ that are interfaces, with $(\neighbori,\neighborj)=(\neighbori(i,j),\neighborj(i,j))$, we have 
\begin{equation*}
    \max \left\{\mathrm{deg}(\alpha^{(i)}_{j}), \mathrm{deg}(\alpha^{(\neighbori)}_{\neighborj})\right\}=1
\end{equation*}
and no zero of $\alpha^{(i)}_{j}(\beta^{(\neighbori)}_{\neighborj}\circ r)+\beta^{(i)}_{j}(\alpha^{(\neighbori)}_{\neighborj}\circ r)$ is a break point of $Z^{(i)}_j$, where $r(\xi)=1-\xi$, if $e_{(i,j)}$ flips the direction, and $r(\xi)=\xi$, otherwise. This follows from Lemmas~3 and~4 in \citeOneRef{Kapl2017}.
The spaces without $H^2_0$ boundary conditions satisfy the equality, if moreover for all boundary edges we assume that the traces and directional derivatives in~\eqref{eq:V1 AS} are in the full spline spaces $S_{p,k,{Z}^{(i)}_j}$. We avoid this distinction to simplify the presentation.

It is possible to describe the space $\mathcal{V}^{1}_h$ exactly, by modifying the conditions that appear in~\eqref{eq:V1 AS} according to the polynomial degrees of the gluing data for each interface. However, to prove approximation properties of $\mathcal{V}^{1}_h$ we can restrict ourselves to the subspace $\mathcal{V}^{1}_{AS}$.
\end{remark}
The structure of the AS-$G^1$ and Argyris-like subspaces allows a patch-local description, which is summarized in the following. For every patch $\Omega^{(i)}$, $i\in \mathcal{I}_\Omega$, let
\begin{equation*}
    \mathcal{S}_{AS}^{(i)} := \left\{ u_h \circ \mathbf{G}^{(i)} : u_h \in \mathcal{V}^{1}_{AS}\right\} \subseteq S_{p,k,\mathbf{Z}^{(i)}}
\end{equation*}
be the patch-local AS-$G^1$ subspace which can be described in the following way.
\begin{lemma}\label{lem:continuity_new_notaion}
    Let $\Omega^{(i)}$, $i\in \mathcal I_\Omega$, be a fixed patch.
    The space $\mathcal{S}_{AS}^{(i)}$ is the space of all $\widehat{u}^{(i)} \in S_{p,k,\mathbf{Z}^{(i)}}$, where, for all $j\in  \{1,2,3,4\}$,
    \begin{equation}\label{eq:pullback-traces}
        \widehat u^{(i)}_h|_{\widehat \Sigma_j} \in S_{p,k+1,Z^{(i)}_j}
    \end{equation}
    and
    \begin{equation}\label{eq:pullback-d-derivatives}
        \left(\mathbf{d}_j^{(i)} \cdot \nabla \widehat u^{(i)}_h\right)|_{\widehat \Sigma_j} \in S_{p-1,k,Z^{(i)}_j}.
    \end{equation}
\end{lemma}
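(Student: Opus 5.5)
The proof has two inclusions. The inclusion $\mathcal S^{(i)}_{AS}\subseteq\{\dots\}$ comes from pulling back the definition~\eqref{eq:V1 AS} through $\mathbf G^{(i)}$. The reverse inclusion needs an extension argument: every patch-local function satisfying \eqref{eq:pullback-traces}--\eqref{eq:pullback-d-derivatives} must be shown to be the restriction of a global function in $\mathcal V^1_{AS}$. I expect this extension step, and in particular its behaviour at the vertices, to be the main obstacle.

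\emph{First inclusion.} Let $u_h\in\mathcal V^1_{AS}$ and $\widehat u^{(i)}_h=u_h\circ\mathbf G^{(i)}$. Membership $\widehat u^{(i)}_h\in S_{p,k,\mathbf Z^{(i)}}$ holds by~\eqref{eq:Vh1}, and the trace condition~\eqref{eq:pullback-traces} is literally the first condition in~\eqref{eq:V1 AS}. For the derivative condition, the chain rule gives $\nabla\widehat u^{(i)}_h=(\nabla\mathbf G^{(i)})^{\top}(\nabla u_h\circ\mathbf G^{(i)})$. Hence
\begin{equation*}
\mathbf d^{(i)}_j\cdot\nabla\widehat u^{(i)}_h
=\bigl(\mathbf d^{(i)}_j\cdot\nabla\mathbf G^{(i)}\bigr)\cdot\bigl(\nabla u_h\circ\mathbf G^{(i)}\bigr)
=\bigl(\mathfrak d^{(i)}_j\cdot\nabla u_h\bigr)\circ\mathbf G^{(i)}
\end{equation*}
by~\eqref{eq:directions}. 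Restricting to $\widehat\Sigma_j$ and using the second condition in~\eqref{eq:V1 AS} yields~\eqref{eq:pullback-d-derivatives}. Conversely, this identity shows that the patch-local conditions are exactly the pulled-back versions of the global ones. So the reverse inclusion reduces to extending a given admissible $\widehat w\in S_{p,k,\mathbf Z^{(i)}}$ to all other patches.

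\emph{Reverse inclusion.} Given an admissible $\widehat w$, I would split it as $\widehat w=\widehat w_{\mathrm{int}}+\sum_j\widehat w_{\Sigma_j}+\sum_j\widehat w_{\mathbf x_j}$ using a standard spline splitting.
\begin{itemize}
\item $\widehat w_{\mathrm{int}}$ vanishes with its first derivatives on $\partial\widehat\Omega$, so it extends by zero.
\item $\widehat w_{\mathbf x_j}$ carries the $C^2$ Hermite data of $\widehat w$ at the vertex $\widehat{\mathbf x}_j$, i.e.\ its values and derivatives up to order two there.
\item $\widehat w_{\Sigma_j}$ carries the remaining trace data $g_0\in S_{p,k+1,Z^{(i)}_j}$ and transversal data $g_1\in S_{p-1,k,Z^{(i)}_j}$ on edge $j$, and is flat at both endpoints.
\end{itemize}

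For an interface edge part, I would set $\widetilde g_0=g_0\circ e^{-1}_{(i,j)}$ and $\widetilde g_1=-g_1\circ e^{-1}_{(i,j)}$ on the neighbour, using $\mathfrak d^{(i)}_j=-\mathfrak d^{(\neighbori)}_{\neighborj}$. I would then define the neighbour function, written here for $\widehat\Sigma_1$, as
\begin{equation*}
\widetilde g_0(\xi_1)M_0(\xi_2)+\bigl(\beta\widetilde g_0'+\alpha\widetilde g_1\bigr)(\xi_1)M_1(\xi_2),
\end{equation*}
where $M_0,M_1\in S_{p,k,Z_2}$ are Hermite-type splines supported in the first few elements. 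Since $\alpha,\beta$ are linear and $1\le k\le p-2$, the product $\beta\widetilde g_0'+\alpha\widetilde g_1$ lies in $S_{p,k,Z}$, and this is exactly where the AS-$G^1$ spaces of~\eqref{eq:V1 AS} enter. Because the edge data are flat at the endpoints, the extension satisfies the AS conditions on the neighbour's other edges trivially.

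The vertex parts are the hard step. The data of $\widehat w$ at $\widehat{\mathbf x}_j$ determine a physical value, gradient and Hessian at $\mathbf x=\mathbf G^{(i)}(\widehat{\mathbf x}_j)$ only if $\widehat w$ is compatible with $C^2$ there, which holds for $\mathcal V^1_{\mathcal A}$ but not automatically for $\mathcal V^1_{AS}$. My first attempt would be to invoke the vertex basis construction of \citeRefs{Kapl2017,kapl2019isogeometric}. That construction produces, for each vertex, functions in $\mathcal V^1_{AS}$ whose restrictions to one patch span all patch-local vertex data admissible under \eqref{eq:pullback-traces}--\eqref{eq:pullback-d-derivatives}. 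If needed, I would instead verify this directly: the admissible local jets at a vertex are constrained only by the two adjacent edge conditions. Each such jet can be transported patch by patch around the vertex, again using the gluing relation~\eqref{eq:geometry-gluing-condition} on each interface. The main work is checking that this transport closes consistently around interior vertices, which follows from consistency of the physical gradient obtained from the two directions $\mathfrak d$ on each patch.
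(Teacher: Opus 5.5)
Your first inclusion is exactly the paper's entire proof. The paper notes that \eqref{eq:pullback-traces} is the first condition in \eqref{eq:V1 AS}, and it obtains $\mathbf d^{(i)}_j\cdot\nabla\widehat u^{(i)}_h=(\mathfrak d^{(i)}_j\cdot\nabla u_h)\circ\mathbf G^{(i)}$ from the chain rule and \eqref{eq:directions}; it stops there. The paper never proves the reverse inclusion, i.e.\ that a single admissible patch function extends to an element of $\mathcal V^1_{AS}$. It also never needs it. In Lemma~\ref{lem:mapping}, membership $\Pi_{AS}u\in\mathcal V^1_{AS}$ is checked for a function that is already defined on all patches, using the local conditions, matching across interfaces, and the same chain-rule identity.

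Your extension argument therefore goes beyond the paper. Its architecture is sound:
\begin{itemize}
\item subtract a global function whose pullback has the same $2$-jets as $\widehat w$ at all four corners, so that the remaining edge data are flat ($g_0,g_0',g_0''$ and $g_1,g_1'$ vanish at both endpoints);
\item extend each edge's data to the neighbour by a Hermite-profile construction, and by zero elsewhere.
\end{itemize}
The gap is the vertex step. The closure of your jet transport around an interior vertex does \emph{not} follow from consistency of the physical gradient. That only controls first-order data, whereas the closure condition is second order. Differentiating the interface relation along an edge gives, at the vertex, $\partial_1\partial_2\widehat u^{B}=\pm\frac{\alpha^B}{\alpha^A}\,\partial_1\partial_2\widehat u^{A}+c\,\partial_t^2\widehat u+(\text{first-order terms})$. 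Here $\partial_t^2$ is the second derivative along the shared edge, and $c$ is proportional to the determinant of the tangents of the two remaining edges of $A$ and $B$. At a valence-four vertex whose opposite edges are collinear, every such $c$ vanishes. The free second tangential derivatives in your transport then cannot absorb any closure defect; degeneracies of this type are what make the vertex dimension of $\mathcal V^1_{AS}$ geometry dependent.

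The correct reason closure holds requires $\mathbf G$ to be $C^2$ on the elements touching the vertex. This is more than $2$-regularity, but it holds for spline geometries. Under it, the one-sided $2$-jet of $\widehat w$ at a corner is the pullback of a unique physical $2$-jet at $\mathbf x$. Imposing that same physical jet on every patch around $\mathbf x$ satisfies all edge relations at once. So, contrary to your concern, single-patch data never violate $C^2$-compatibility, and functions from $\mathcal V^1_{\mathcal A}$ suffice. For example, the $C^2$ vertex functions you cite realize every corner $2$-jet on patch $i$. This, rather than the gradient-based transport, is what closes your argument.
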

\begin{proof}
Identity \eqref{eq:pullback-traces} is a direct consequence of~\eqref{eq:V1 AS}. For the identity~\eqref{eq:pullback-d-derivatives} involving the directional derivative, applying the chain rule to $\widehat{u}_h^{(i)} = u_h \circ \mathbf{G}^{(i)}$ and using~\eqref{eq:directions} yields
\begin{equation*}
    \mathbf{d}_j^{(i)} \cdot \nabla \widehat{u}_h^{(i)} = \mathbf{d}_j^{(i)} \cdot \nabla \mathbf{G}^{(i)} (\nabla u_h \circ \mathbf{G}^{(i)}) = \left(\mathfrak{d}^{(i)}_j \cdot \nabla u_h \right) \circ \mathbf{G}^{(i)},
\end{equation*}
which concludes the proof.
\end{proof}
For any $C^1$-smooth isogeometric function $u_h\in \mathcal{V}^{1}_{AS}$ and any interface between two neighboring patches, the corresponding traces~\eqref{eq:pullback-traces} as well as the directional derivatives~\eqref{eq:pullback-d-derivatives} coincide. In the following section, we prove approximation error estimates for this space, using a patch-wise quasi-interpolator that is defined by projecting traces and directional derivatives into the corresponding spline spaces, following Lemma~\ref{lem:continuity_new_notaion}.

\section{Approximation error estimates}\label{sec:approx-error-estimates}

In this section, we prove the main theorem, which provides the approximation error estimates and interpolation properties of the global AS-$G^1$ projector. First, we recall error estimates for univariate Ritz-type projectors, which we extend to tensor-product spaces. We then define a patch-wise AS-$G^1$ projector with suitable interpolation properties, which results in matching traces and gradients at all interfaces. At last, we provide the proof of the main theorem and show for which geometries it can be applied. 

\subsection{The main theorem}\label{subsec:main-theorem}

In order to provide the main quasi interpolation error estimate, we need that the function $u\in H^2(\Omega)$, which is to be approximated, is sufficiently smooth within each patch. This is described for all $s\ge 2$ by the bent Sobolev space
\begin{equation*}
    \mathcal{H}^s(\Omega) := \left\{ u \in H^2(\Omega) : u|_{\Omega^{(i)}} \in H^s (\Omega^{(i)})\mbox{ for all } i \in \mathcal{I}_{\Omega}\right\},
\end{equation*}
with norm
\begin{equation*}
    \|u\|_{\mathcal{H}^s(\Omega)}
    :=
    \left(
    \sum_{i \in \mathcal{I}_{\Omega}}
    \|u\|_{H^s(\Omega^{(i)})}^2
    \right)^{1/2}.
\end{equation*}
Certainly, $\|u\|_{\mathcal{H}^s(\Omega)}=\|u\|_{H^s(\Omega)}$ for all $u\in H^s(\Omega)$.

Moreover, on $(0,1)$ and on the parameter domain $\widehat\Omega$, we define the broken Sobolev spaces over the partitions $\elemSet_Z$, see~\eqref{eq:partition}, and $\mathbf{Z}^{(i)}$ via
\begin{equation*}
\begin{aligned}
    \mathcal{H}^s(Z;(0,1)) &:= \left\{ u \in L^2(0,1) : u|_{E} \in H^s (E)\mbox{ for all } E \in \elemSet_{Z}\right\},\\
    \mathcal{H}^s(\mathbf{Z}^{(i)};\widehat\Omega) &:= \left\{ u \in L^2(\widehat\Omega) : u|_{E} \in H^s (E)\mbox{ for all } E \in \elemSet_{\mathbf{Z}^{(i)}}\right\},
\end{aligned}
\end{equation*}
equipped with the broken semi-norms
\begin{equation*}
    |u|_{\mathcal{H}^s(Z;(0,1))}
    :=
    \left(
    \sum_{E \in \elemSet_{Z}}
    |u|_{H^s(E)}^2
    \right)^{1/2}
\quad
\mbox{and}
\quad
    |u|_{\mathcal{H}^s(\mathbf{Z}^{(i)};\widehat\Omega)}
    :=
    \left(
    \sum_{E \in \elemSet_{\mathbf{Z}^{(i)}}}
    |u|_{H^s(E)}^2
    \right)^{1/2},
\end{equation*}
respectively.

For higher Sobolev regularities we have to require additional smoothness conditions on the parameterization, which extend the concept of $2$-regularity defined in~\eqref{eq:lower-bound-Jacobi-det1}.
\begin{definition}\label{def:s regular as}
    We call the parameterization $s$-regular 
    for any integer $s > 2$ if
    besides \eqref{eq:lower-bound-Jacobi-det1} additionally 
    \begin{equation*}
        \mathbf{G}^{(i)} |_{\widehat E}
        \in
        W^{s,\infty}(\widehat E),
        \quad \mbox{for all}\quad i\in\mathcal{I}_\Omega\mbox{ and for all }\widehat E\in \elemSet_{\mathbf{Z}^{(i)}}
        .
    \end{equation*}
\end{definition}
Isogeometric parameterizations, i.e., $\mathbf{G}^{(i)} \in (S_{p,k,\mathbf{Z}^{(i)}})^2$, with $k\geq 1$, are $\infty$-regular.
\begin{definition}\label{def:constants}
    We use the notation $C(\mathbf{G},s)$ to denote a generic constant that depends on the value of the smoothness parameter $s\ge 2$ and
    \begin{equation*}
    \begin{aligned}   
        &\max_{i \in \mathcal I_\Omega} \| \mathbf{G}^{(i)} \|_{W^{s,\infty}(\widehat\Omega)} ,
        \quad
        \max_{i \in \mathcal I_\Omega} \| (\det\nabla\mathbf{G}^{(i)})^{-1} \|_{L^{\infty}(\widehat\Omega)},\\
        \quad
        &\max_{(i,j)\in \mathcal I_{\widehat \Sigma}} \| \alpha_j^{(i)} \|_{L^{\infty}(0,1)},
        \quad
        \max_{(i,j)\in \mathcal I_{\widehat \Sigma}} \| \beta_j^{(i)} \|_{L^{\infty}(0,1)}.
        \end{aligned}
    \end{equation*}
    We use the notation $C(s)$ and $C(p)$ to denote generic constants depending only on those variables. 
    We use $C$ to denote a universal constant, i.e., one that is independent of any parameters. Any of these generic constants can have a different value in any instance where it is used.
\end{definition}
Since one can compute normalized gluing data as described in \ref{ap:comp-gluing}, we think of the gluing data as a quantity that depends on the parameterization $\mathbf G$.
The dependence on the parameterization can be quantified in case of bilinear patches or in case of AS-$G^1$ reparameterizations, following the construction from \citeOneRef{kapl2018construction}.

Using these assumptions, we are now in the position to state the main quasi interpolation error estimate.
\begin{theorem}\label{thrm:main}
    Let $p$, $k$, $t$ and $s$ be integers with $0\leq t \leq 2 < k+2 \leq p$ and $4 \leq s \leq p+1$. Let $\mathcal V_{AS}^1$ be the AS-$G^1$ space over an $s$-regular analysis-suitable $G^1$ parameterization, where $h_Z\leq 1/(p+1)$ for all underlying partitions $Z$.
    There is a projector
    $\tpprojectorSph: \mathcal H^4(\Omega) \to \mathcal{V}^1_{AS}$ and a constant $C(\mathbf{G},s)$ that only depends on the parameterization and Sobolev order, such that 
    \begin{equation}\label{eq:thrm:main:estimate}
        \|u-\tpprojectorSph u\|_{H^t(\Omega)}
        \leq C(\mathbf{G},s) \physH^{s-t} \|u\|_{\mathcal{H}^s(\Omega)}
        \quad\mbox{for all}\quad u\in \mathcal{H}^s(\Omega).
    \end{equation}
    Moreover, we have for all $(i,j)\in\mathcal{I}_\Omega\times\{1,2,3,4\}$
    \begin{align}
        \label{eq:thrm:main:cond-1}
        u\in C^2(\mathbf x)
        &\quad \Rightarrow \quad
        \tpprojectorSph u\in C^2(\mathbf x)
        && \mbox{for any} \quad \mathbf x = \mathbf{G}^{(i)}(\widehat{\mathbf{x}}_j),
        \\
        \label{eq:thrm:main:cond-2}
        u\big|_{\Sigma} = 0
        &\quad \Rightarrow \quad
        (\tpprojectorSph u)\big|_{\Sigma} = 0
        && \mbox{for any} \quad \Sigma= \mathbf{G}^{(i)}(\widehat{\Sigma}_j),
        \\
        \label{eq:thrm:main:cond-3}
        \nabla u\big|_{\Sigma} = \mathbf 0
        &\quad \Rightarrow \quad
        \nabla(\tpprojectorSph u)\big|_{\Sigma} = \mathbf 0
        && \mbox{for any} \quad \Sigma= \mathbf{G}^{(i)}(\widehat{\Sigma}_j).
    \end{align}
\end{theorem}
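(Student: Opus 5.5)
The plan is to define $\tpprojectorSph$ patch-wise on the parameter domain, $(\tpprojectorSph u)\circ\mathbf G^{(i)} := \tpprojectorS^{(i)}\widehat u^{(i)}$ with $\widehat u^{(i)}:=u\circ\mathbf G^{(i)}$. By Lemma~\ref{lem:continuity_new_notaion}, the resulting function lies in $\mathcal V^1_{AS}$ if, on every interface, the patch projector reproduces a prescribed trace in $S_{p,k+1,Z^{(i)}_j}$ and a prescribed $\mathbf d^{(i)}_j$-derivative in $S_{p-1,k,Z^{(i)}_j}$. These edge data must be defined intrinsically in terms of $u$ on the physical interface, so that both neighbouring patches produce the same data. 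Since $\mathfrak d^{(i)}_j=-\mathfrak d^{(\neighbori)}_{\neighborj}$, the edge data are
\[
g_0:=\edgeprojectorS{0}{}\bigl(\widehat u^{(i)}|_{\widehat\Sigma_j}\bigr),
\qquad
g_1:=\edgeprojectorS{1}{}\bigl((\mathbf d^{(i)}_j\cdot\nabla\widehat u^{(i)})|_{\widehat\Sigma_j}\bigr).
\]
Here the univariate projectors are $H^2$- and $H^1$-orthogonal (Ritz-type) projectors into $S_{p,k+1,Z}$ and $S_{p-1,k,Z}$. They are chosen to interpolate at the endpoints: values and first and second derivatives for $g_0$, values and first derivatives for $g_1$. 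With these choices the edge data commute with the reflection $e_{(i,j)}$ and with the sign flip of the direction. I will take the $p$-robust univariate estimates from the cited literature (Refs.~\refcite{TT16,S19,sande2022ritz}), of the form $|v-\Pi v|_{H^q}\le C\, h^{r-q}|v|_{H^r}$ for $h\le 1/(p+1)$. These extend to the tensor-product Ritz projector $\tpprojectorsimple=\Pi_1\otimes\Pi_2$ into $S_{p,k,\mathbf Z}$ by the usual splitting $I-\Pi_1\otimes\Pi_2=(I-\Pi_1)\otimes I+\Pi_1\otimes(I-\Pi_2)$, together with $H^q$-stability of $\Pi_1$.

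The patch projector is built from $\tpprojectorsimple\widehat u$ plus edge correction terms
\[
\tpprojectorS\widehat u=\tpprojectorsimple\widehat u+\sum_{j=1}^4 \edgeextension{j}{0}{g_{0,j}-\tpprojectorsimple\widehat u|_{\widehat\Sigma_j}} + \sum_{j=1}^4\edgeextension{j}{1}{g_{1,j}-\mathbf d_j\cdot\nabla(\cdots)|_{\widehat\Sigma_j}}.
\]
For $\widehat\Sigma_1$, the extension $E^{(0)}_1(w)(\xi_1,\xi_2)$ has the form $w(\xi_1)\,b_0(\xi_2)-\beta_1(\xi_1)w'(\xi_1)\,b_1(\xi_2)$, and $E^{(1)}_1(w)$ has the form $\alpha_1(\xi_1)w(\xi_1)\,b_1(\xi_2)$. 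Here $b_0,b_1\in S_{p,k,Z_2}$ are short-support functions, supported on a few elements of width $\sim h$ near $\xi_2=0$, with $b_0(0)=1$, $b_0'(0)=0$, $b_1(0)=0$, $b_1'(0)=1$. Because $\alpha,\beta$ are linear, $\beta w'$ and $\alpha w$ lie in $S_{p,k,Z_1}$ when $w\in S_{p,k+1}$ and $w\in S_{p-1,k}$ respectively; this is exactly why the AS-$G^1$ spaces are chosen as they are. The corners need care: the corrections on adjacent edges interact there. The interpolation conditions at the vertices (values, gradients and second derivatives, via the endpoint conditions of the univariate projectors) make the corner contributions compatible, because the residual data then vanish at the vertex together with its derivatives. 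The same conditions give \eqref{eq:thrm:main:cond-1}: if $u\in C^2(\mathbf x)$, all patches reproduce the same $C^2$ jet. Properties \eqref{eq:thrm:main:cond-2} and \eqref{eq:thrm:main:cond-3} follow because zero edge data give $g_0=g_1=0$, and the Ritz projectors map zero boundary traces to zero.

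For the error estimate I pull back to the parameter domain. The norm equivalence $\|u\|_{H^t(\Omega^{(i)})}\le C(\mathbf G,s)\|\widehat u\|_{H^t(\widehat\Omega)}$ holds for $t\le2$, and $|\widehat u|_{\mathcal H^s(\mathbf Z;\widehat\Omega)}\le C(\mathbf G,s)\|u\|_{H^s(\Omega^{(i)})}$ uses $s$-regularity elementwise; $\physH$ and $h_{\mathbf Z}$ are comparable up to $C(\mathbf G,s)$. The tensor-product part gives $h^{s-t}$ directly. For the corrections I will use the inverse-type scaling $\|E^{(0)}(w)\|_{H^t}\lesssim h^{1/2-t}(\|w\|_{L^2}+h\|w'\|_{L^2}+\dots)$, with constants independent of $p$. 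This is the step I expect to be the main obstacle: $p$-robust bounds for extensions of spline traces, which I would get by taking $b_0,b_1$ as the lifting in $\xi_2$ given by the univariate projector's own discrete harmonic extension, rather than by polynomial inverse inequalities, which would introduce powers of $p$. The residual $g_0-\tpprojectorsimple\widehat u|_{\widehat\Sigma}=(\edgeprojectorS{0}{}-I)\widehat u|_{\widehat\Sigma}+(\widehat u-\tpprojectorsimple\widehat u)|_{\widehat\Sigma}$ is bounded by trace inequalities and the univariate estimates with $h^{s-1/2}$ and the matching derivative scalings. The $\mathbf d$-derivative residual loses one derivative, which is why $s\ge4$ is needed: the $H^2$ norm of the correction requires the $H^2$ trace of $\nabla\widehat u$, hence $\widehat u\in H^{7/2+}$. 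Multiplications by $\alpha,\beta$ and their inverses cost only $C(\mathbf G,s)$. Summing over patches gives \eqref{eq:thrm:main:estimate}.
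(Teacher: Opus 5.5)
Your architecture is the paper's: on each patch, a tensor-product Ritz projector plus tensor-product liftings of intrinsically defined trace and $\mathbf d$-derivative data, with corner compatibility coming from endpoint interpolation. However, the ingredient on which $p$-robustness rests is not supplied.

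You need liftings $b_\sigma\in S_{p,k,Z_2}$ with $\partial^t b_\sigma(0)=\delta_{\sigma t}$, vanishing data at $1$, and $|b_\sigma|_{H^t(0,1)}^2\le C h^{1+2(\sigma-t)}$ with $C$ independent of $p$. Only this converts the edge-residual bounds $h^{s-1/2-m}$ into $h^{s-t}$.

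Your first choice, functions supported on a few elements of width $\sim h$, fails for large $k$. For $k=p-2$, a lifting supported on one element of width $h$ must vanish to order $p-1$ at the end of its support. This forces $b_0=(1-\xi/h)^{p-1}(1+(p-1)\xi/h)$, and then $|b_0|_{H^1}^2\sim p/h$. Your fallback, an unspecified ``discrete harmonic'' lifting, does not help by itself: an energy-minimizing lifting only inherits such a bound from an explicit competitor, which you still have to exhibit.

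The missing idea is Lemma~\ref{lem:estimPhi}. Use truncated powers $\psi_{p,\eta}(\xi)=(\max\{0,1-\xi/\eta\})^p$ with a breakpoint $\eta\in Z$ at distance $\sim ph$ from $0$. These lie in $S_{p,p-1,Z}\subseteq S_{p,k,Z}$ and have support of width $\sim ph$; this is where $h_Z\le 1/(p+1)$ enters. They decay on the effective scale $\eta/p\sim h$, so $\|\psi_{p,\eta}\|_{L^2}^2=\eta/(2p+1)\sim h$, $|\psi_{p,\eta}|_{H^1}^2\sim h^{-1}$ and $|\psi_{p,\eta}|_{H^2}^2\sim h^{-3}$, uniformly in $p$. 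Combining three of them, with breakpoints separated by $\sim ph$ and coefficients of size $O(h^\sigma)$, imposes the Hermite conditions at $0$.

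Two further points need repair.

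First, the endpoint properties you attribute to the projectors are off by one order. An $H^2$-orthogonal Ritz projector reproduces only values and first derivatives at the endpoints, and an $H^1$-orthogonal one only values. Second-derivative interpolation by the trace projector is needed, both for \eqref{eq:thrm:main:cond-1} and to make the corner derivative of the normal-derivative residual vanish when $\beta_j\ne0$. The paper uses the $H^3$-Ritz projector for the trace, which interpolates at both ends only for $p\ge5$, and a corrected $H^2$-Ritz projector for $p\in\{3,4\}$ (Corollary~\ref{coro:star}). For the $\mathbf d$-derivative data it uses an $H^2$-Ritz projector.

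Second, your split is valid only in aggregate. You write the trace correction as $w_0b_0-\beta_1w_0'b_1$ with $w_0:=g_0-\tpprojectorsimple\widehat u|_{\widehat\Sigma_1}$, and correct the residual $g_1-\mathbf d_1\cdot\nabla\tpprojectorsimple\widehat u$ separately. This fails term by term for three reasons:
\begin{itemize}
\item Since $w_0$ lies only in $S_{p,k,Z_1}$, $\beta_1w_0'$ is only $C^{k-1}$, so that term is not in $S_{p,k,\mathbf Z}$.
\item $\tpprojectorsimple$ must use $r=2$, because the tensor-product estimate needs $2r\le s$ and $s=4$ is allowed. It therefore does not reproduce $\partial_1^2\widehat u$ at the corners. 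Hence $w_0''$ and the derivative of the $\mathbf d$-residual do not vanish there, and each term alone perturbs the normal derivative on the adjacent edges.
\item Bounding $\beta_1w_0'$ in $H^2(\widehat\Sigma_1)$ would require an $H^3$ error bound for the $H^2$-Ritz projector, which the $p$-robust estimates (valid only for $t\le r$) do not provide.
\end{itemize}
All three issues disappear once you collect the $b_1$-terms into the single normal-derivative residual $\alpha_1g_1+\beta_1\partial_1g_0+\partial_2\tpprojectorsimple\widehat u$. This is the paper's $f^{(1)}_1$ up to sign, and in it the $\beta_1\partial_1\tpprojectorsimple\widehat u$ terms cancel.
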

We give the proof of this theorem at the end of this section.

Equation~\eqref{eq:thrm:main:cond-1} guarantees that $\Pi_{AS}$ projects into $\mathcal V^1_{\mathcal A}$ if the function $u$ is $C^2$-smooth at all vertices $\mathbf x^{(l)}$. According to the Sobolev embedding theorem, \citeRefDetail{Theorem~4.12, Part II}{Adams} this is the case if $u\in H^4(\Omega)$.
The conditions~\eqref{eq:thrm:main:cond-2} and~\eqref{eq:thrm:main:cond-3} are relevant if essential boundary conditions are present: If $u$ vanishes on some (boundary) edge $\Sigma$, then~\eqref{eq:thrm:main:cond-2} states that $\Pi_{AS} u$ also vanishes there. If both $u$ and $\partial_n u$ vanish on some (boundary) edge $\Sigma$, then the combination of~\eqref{eq:thrm:main:cond-2} and~\eqref{eq:thrm:main:cond-3} guarantees that also $\Pi_{AS} u$ and $\partial_n \Pi_{AS} u$ vanish on that edge.

If $s<4$ or if $s$ is not an integer, we can still provide an approximation error estimate.

\begin{corollary} 
    Let $p$ and $k$ be integers and let $s\in\mathbb{R}$ with $2 < k+2 \leq p$ and $2 \leq s \leq p+1$. Let $s':=\max\{\lceil s\rceil,4\}$ and $\mathcal V_{\mathcal A}^1$ be the Argyris-like space over a $s'$-regular analysis-suitable $G^1$ parameterization, where $h_Z\le 1/(p+1)$ for all underlying partitions $Z$.
    There is a constant $C(\mathbf{G},s')$ that only depends on the parameterization and the Sobolev order such that 
    \begin{equation*}
        \inf_{u_h\in \mathcal V_{\mathcal A}^1}
        \|u-u_h\|_{H^2(\Omega)}
        \leq C(\mathbf{G},s') \physH^{s-2} \|u\|_{H^s(\Omega)}
        \quad\mbox{for all}\quad u\in H^s(\Omega).
    \end{equation*}
\end{corollary}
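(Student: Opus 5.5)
The plan is to derive the corollary from Theorem~\ref{thrm:main} by a density/interpolation argument. I split into the two regimes $s\ge 4$ and $2\le s<4$.

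\emph{Integer $s\ge 4$.} Here $s'=s$, and the Sobolev embedding gives $u\in H^s(\Omega)\subset H^4(\Omega)\subset C^2(\overline\Omega)$. So $u$ is $C^2$ at every vertex, and \eqref{eq:thrm:main:cond-1} shows $\tpprojectorSph u\in\mathcal V^1_{\mathcal A}$. Taking $u_h=\tpprojectorSph u$ and $t=2$ in \eqref{eq:thrm:main:estimate}, together with $\|u\|_{\mathcal H^s(\Omega)}=\|u\|_{H^s(\Omega)}$, gives the claim directly.

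\emph{Non-integer $s>4$.} Interpolate between the integers $\lfloor s\rfloor$ and $\lceil s\rceil=s'$. The error operator $I-\tpprojectorSph$ is linear, so it is bounded as
\begin{equation*}
H^{\lfloor s\rfloor}(\Omega)\to H^2(\Omega)\ \text{with norm } C\physH^{\lfloor s\rfloor-2},
\qquad
H^{s'}(\Omega)\to H^2(\Omega)\ \text{with norm } C\physH^{s'-2}.
\end{equation*}
Real interpolation, using $[H^{\lfloor s\rfloor},H^{s'}]_{\theta,2}=H^s$ on the Lipschitz domain, then yields the bound $C\physH^{s-2}$. The constant depends on $C(\mathbf G,\lfloor s\rfloor)$ and $C(\mathbf G,s')$, and both are bounded by $C(\mathbf G,s')$. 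The required regularity $s'\le p+1$ holds because $s\le p+1$ and $p+1$ is an integer.

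\emph{The range $2\le s<4$.} Now $s'=4$, and $\tpprojectorSph$ is no longer defined on $H^s$. This is the main obstacle. I would use a K-functional argument with a smoothing step. Given $u\in H^s(\Omega)$, first extend it to $\mathbb R^2$ by a Stein extension, bounded on all $H^r$. Then mollify at scale $\physH$ to obtain $u_\epsilon\in H^4(\Omega)$ with
\begin{equation*}
\|u-u_\epsilon\|_{H^2(\Omega)}\le C\physH^{s-2}\|u\|_{H^s(\Omega)},
\qquad
\|u_\epsilon\|_{H^4(\Omega)}\le C\physH^{s-4}\|u\|_{H^s(\Omega)},
\end{equation*}
the second bound using $s\le 4$. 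Set $u_h:=\tpprojectorSph u_\epsilon\in\mathcal V^1_{\mathcal A}$; it lies in this space since $u_\epsilon\in H^4\subset C^2$. The triangle inequality and the theorem with $s=4$, $t=2$ give
\begin{equation*}
\|u-u_h\|_{H^2}\le \|u-u_\epsilon\|_{H^2}+C(\mathbf G,4)\physH^{2}\|u_\epsilon\|_{H^4}\le C\physH^{s-2}\|u\|_{H^s}.
\end{equation*}
The theorem requires $4\le p+1$, which holds since $p\ge k+2\ge 3$. The delicate point is that the extension and mollification constants must depend only on $\Omega$, which itself depends only on $\mathbf G$. I also assume $\physH\lesssim1$ so that the lower-order terms are absorbed.

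The same mollification argument also covers the non-integer case $s>4$ if interpolation spaces are to be avoided. In that case one mollifies into $H^{s'}$ and uses $\|u_\epsilon\|_{H^{s'}}\le C\physH^{s-s'}\|u\|_{H^s}$.
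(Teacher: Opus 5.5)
Your proof is correct. For integer $s\ge 4$ and for non-integer $s>4$ you follow the paper: Theorem~\ref{thrm:main} with \eqref{eq:thrm:main:cond-1}, then interpolation between $\lfloor s\rfloor$ and $\lceil s\rceil$. You make that step cleaner by interpolating the single linear operator $I-\Pi_{AS}$.

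For $2\le s<4$ you take a genuinely different route. The paper interpolates between two estimates:
\begin{itemize}
\item the trivial bound $\inf_{u_h}\|u-u_h\|_{H^2(\Omega)}\le\|u\|_{H^2(\Omega)}$, which holds because $0\in\mathcal V^1_{\mathcal A}$;
\item the $\sigma=4$ bound from the theorem.
\end{itemize}
These two endpoint estimates come from different maps ($0$ and $\Pi_{AS}$). So what is really used is a $K$-functional argument for the sublinear best-approximation error,
$\inf_{u_h}\|u-u_h\|_{H^2}\le\inf_{u=u_0+u_1}\bigl(\|u_0\|_{H^2}+C\physH^{2}\|u_1\|_{H^4}\bigr)\le C\physH^{s-2}\|u\|_{H^s}$,
combined with the identification $(H^2(\Omega),H^4(\Omega))_{\theta,2}=H^s(\Omega)$.

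Your extension-plus-mollification step explicitly constructs a near-optimal splitting $u=(u-u_\epsilon)+u_\epsilon$ at scale $\physH$. This makes the step self-contained and avoids the question of interpolating a nonlinear quantity. The cost is that you need three ingredients:
\begin{itemize}
\item Stein's extension operator;
\item a symmetric mollifier, since vanishing first moments are needed to get the order $s-2$ in $\|u-u_\epsilon\|_{H^2}$ once $s>3$;
\item the normalization $\physH\le C$, which holds because $\physH\le\mathrm{diam}\,\Omega$.
\end{itemize}
You flag the dependence of the constant on $\Omega$ as delicate, but it is not specific to your route. The paper's identification of interpolation spaces on $\Omega$ carries the same dependence, and neither constant is among the quantities listed in Definition~\ref{def:constants}.

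One correction to your closing remark. For non-integer $s>4$, reusing the splitting $\|u-u_h\|_{H^2}\le\|u-u_\epsilon\|_{H^2}+\|(I-\Pi_{AS})u_\epsilon\|_{H^2}$ would require $\|u-u_\epsilon\|_{H^2}\le C\physH^{s-2}\|u\|_{H^s}$ with $s-2>2$. A standard (second-order) mollifier does not give this. You can fix it in either of two ways:
\begin{itemize}
\item use a mollifier with more vanishing moments; or
\item write $u-\Pi_{AS}u_\epsilon=(I-\Pi_{AS})(u-u_\epsilon)+(I-\Pi_{AS})u_\epsilon$, apply Theorem~\ref{thrm:main} with $\lfloor s\rfloor$ to the first term, and use $\|u-u_\epsilon\|_{H^{\lfloor s\rfloor}}\le C\physH^{s-\lfloor s\rfloor}\|u\|_{H^s}$.
\end{itemize}
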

\begin{proof}
    Using Theorem~\ref{thrm:main}, particularly~\eqref{eq:thrm:main:estimate} and~\eqref{eq:thrm:main:cond-1}, in combination with~\eqref{eq:V1star}, we obtain for all integers $\sigma$ with $4 \leq \sigma\leq p+1$
    \begin{equation}\label{eq:corollary:main-1}
        \inf_{u_h\in \mathcal V_{\mathcal A}^1} \|u-u_h\|_{H^2(\Omega)}
        \le C(\mathbf{G},\sigma) \physH^{\sigma-2} \|u\|_{H^{\sigma}(\Omega)}
        \quad\mbox{for all}\quad u\in H^{\sigma}(\Omega) .
    \end{equation}
    This gives the desired result for integer $s$ with $4\leq s \leq p+1$.  
    For non-integer $s$ with $4 <s< p+1$, the result is obtained by Hilbert space interpolation using $\sigma=\lfloor s \rfloor$ and $\sigma=\lceil s \rceil$ (cf. the interpolation theorem \citeRefDetail{Theorem~7.23}{Adams} and the equivalence of Sobolev spaces with the corresponding interpolation spaces \citeRefDetail{Chapter~1, Theorem~9.6}{Lions1972}).
    Since $0\in\mathcal V_{\mathcal A}^1$, we also have
    \begin{equation}\label{eq:corollary:main-2}
        \inf_{u_h\in \mathcal V_{\mathcal A}^1} \|u-u_h\|_{H^2(\Omega)}
        \le \|u\|_{H^2(\Omega)}
        \quad\mbox{for all}\quad u\in H^2(\Omega) ,
    \end{equation}
    which shows the desired result for $s=2$.
    The result for $2<s<4$ also follows using Hilbert space interpolation, specifically, if one interpolates between~\eqref{eq:corollary:main-1} with $\sigma=4$ and~\eqref{eq:corollary:main-2}.
    This finishes the proof.
\end{proof}

In the following, we discuss to which classes of geometry parameterizations the main theorem can be applied. While the presented function spaces are naturally motivated by the isogeometric framework, our theoretical results do not strictly require an isoparametric approach. 
Specifically, Theorem~\ref{thrm:main} holds for any multi-patch parameterization that satisfies the $s$-regular analysis-suitable $G^1$ conditions established in Definitions \ref{def:ASG1domainAsmpt}, \ref{def:s regular as}, and \ref{def:constants}. 
Consequently, the derived approximation error estimates remain valid for a range of configurations, including standard AS-$G^1$ B-spline discretizations, non-isoparametric AS-$G^1$ NURBS discretizations, and $C^1$ finite element spaces over geometrically conforming bilinear quadrilateral meshes, which can be interpreted as multi-patch B-splines. In the latter case, our theory applies after sufficiently many refinements, such that $h\leq 1/(p+1)$, and generalizes the results from \citeOneRef{Kapl2021}. 
For standard AS-$G^1$ B-spline geometries, the isogeometric space is defined via the pullback
\begin{equation*}
    \mathcal{V}_h^1 := \left\{ u_h \in C^1(\Omega) : u_h \circ \mathbf{G}^{(i)} \in S_{p,k,\mathbf{Z}^{(i)}} \text{ where }\mathbf{G}^{(i)} \in \left(S_{p,k,\mathbf{Z}^{(i)}}\right)^2\text{ for all } i \in \mathcal{I}_\Omega \right\}.
\end{equation*}
Geometric continuity requires matching traces and coplanar directional derivatives across all interfaces, enforcing the AS-$G^1$ condition
\begin{equation}\label{eq:interface continuity}
 \begin{aligned}
     \mathbf{G}^{(i)}|_{\widehat\Sigma_{j}} &= \mathbf{G}^{(\neighbori(i,j))}\circ e_{(i,j)},\\
     \left(\mathbf{d}_{j}^{(i)}\cdot\nabla\mathbf{G}^{(i)} \right)|_{\widehat\Sigma_{j}}
    &= -\left(\mathbf{d}_{\neighborj(i,j)}^{(\neighbori(i,j))}\cdot\nabla\mathbf{G}^{(\neighbori(i,j))}\right) \circ e_{(i,j)}.
 \end{aligned}
\end{equation}
For AS-$G^1$ NURBS geometries 
\begin{equation*}
    \mathbf{G}^{(i)} = \frac{\mathbf{F}^{(i)}}{w^{(i)}} \quad\mbox{with}\quad w^{(i)}\in S_{p,k,\mathbf{Z}^{(i)}}\quad\mbox{and}\quad \mathbf{F}^{(i)}\in \left(S_{p,k,\mathbf{Z}^{(i)}}\right)^2\quad\mbox{for all}\quad i \in \mathcal{I}_\Omega,
\end{equation*}
utilizing non-isogeometric discretizations, where the functions are polynomial splines on the parameter domain
\begin{equation*}
    \mathcal{V}_h^1 := \left\{ u_h \in C^1(\Omega) : u_h \circ \mathbf{G}^{(i)} \in S_{p,k,\mathbf{Z}^{(i)}} \text{ for all } i \in \mathcal{I}_\Omega \right\},
\end{equation*}
the rational multi-patch parameterization $\mathbf{G}^{(i)}$ needs to satisfy the geometric continuity constraints~\eqref{eq:interface continuity} directly. The $G^1$-continuity of the physical geometry parameterization $\mathbf{G}^{(i)} = \mathbf{F}^{(i)} / w^{(i)}$ does not necessarily require the homogeneous components $\mathbf{F}^{(i)}$ and $w^{(i)}$ to independently satisfy these interface matching conditions, but if they do, then the AS-$G^1$ conditions also hold for $\mathbf{G}^{(i)}$.

In the following we derive the estimates for univariate and tensor-product splines that we need in order to prove the main theorem.

\subsection{Estimates for univariate splines}\label{subsec:univ-estim}

For our analysis, we use the existence of Ritz-type projectors that satisfy certain Hermite interpolation properties and approximation error estimates, as developed in \citeOneRef{sande2022ritz}.

\begin{lemma}\label{lem:1D_edge_cond}
    Let $p$, $k$ and $r$ be integers with
    $0\le r\le k+1 \le p$. 
    There is a projector $\Pi_{p,k,Z}^{(r)}:H^r(0,1) \to S_{p,k,Z}$
    such that the following statements hold.
    \\
    (a) $\Pi_{p,k,Z}^{(r)}$ is $H^r$-orthogonal to $S_{p,k,Z}$, i.e.,
    \begin{equation*}
    \left(\partial^{r}(u-\Pi_{p,k,Z}^{(r)} u),\partial^{r}v_h\right)_{L^2(0,1)}  = 0
    \quad
    \mbox{for all}
    \;
    v_h\in S_{p,k,Z}
    \;
    \mbox{and}
    \;
    u\in H^r(0,1).
    \end{equation*}
    (b) $\Pi_{p,k,Z}^{(r)}$ interpolates on the left side, i.e.,
    \begin{equation*}
    \partial^{s}\Pi_{p,k,Z}^{(r)} u(0)=\partial^{s} u(0)
    \quad
    \mbox{for all integers}\;
    s
    \;\mbox{with} \;
    0 \leq s\leq r-1
    \;\mbox{and}\;
    u\in H^r(0,1).
    \end{equation*}
    (c) If additionally
    $p \ge 2r-1$, 
    it also interpolates on the right side, i.e.,
    \begin{equation*}
    \partial^{s}\Pi_{p,k,Z}^{(r)} u(1)=\partial^{s} u(1)
    \quad
    \mbox{for all integers}\;
    s
    \;\mbox{with} \;
    0 \leq s\leq r-1
    \;\mbox{and}\;
    u\in H^r(0,1).
    \end{equation*}
    (d) For $t$ and $s$ integers with $0\le t\le r\le s \le p+1$ and $t\ge 2r-p-1$ and all
    $ u\in  H^{\min\{s,k+1\}}(0,1)\cap\mathcal{H}^s(Z;(0,1))$,
    the following error estimate holds:
    \begin{equation}\label{eq:lem:1Drst}
        |u-\Pi_{p,k,Z}^{(r)} u|_{H^t(0,1)} \le  \left(\frac{h_Z}{\pi}\right)^{s-t} |u|_{\mathcal{H}^s(Z;(0,1))}.
    \end{equation}
\end{lemma}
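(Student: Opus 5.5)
We define $\Pi_{p,k,Z}^{(r)}$ as the Ritz-type projector of \citeOneRef{sande2022ritz}, built by recursion on $r$. For $r=0$ we take the $L^2$-orthogonal projection onto $S_{p,k,Z}$. For $r\ge1$ we set
\begin{equation*}
\Pi^{(r)}_{p,k,Z}u(x) := u(0) + \int_0^x \bigl(\Pi^{(r-1)}_{p-1,k-1,Z}\, u'\bigr)(y)\,\mathrm dy .
\end{equation*}
Since differentiation maps $S_{p,k,Z}$ onto $S_{p-1,k-1,Z}$, this is well defined. Unwinding the recursion, $\Pi^{(r)}u$ is the unique spline whose $r$-th derivative is the $L^2$-projection of $\partial^r u$ onto $S_{p-r,k-r,Z}$ and whose derivatives up to order $r-1$ agree with those of $u$ at $0$. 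The constraint $r\le k+1$ guarantees that the reduced spaces remain well defined splines (for smoothness $-1$, discontinuous piecewise polynomials).

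The first two properties follow directly from this form. Property (b) holds by construction, since each antidifferentiation step fixes the value at $0$. Property (a) holds because $\partial^r(u-\Pi^{(r)}u)$ is orthogonal in $L^2$ to $S_{p-r,k-r,Z}=\partial^r S_{p,k,Z}$. The projector property holds because the $L^2$-projection reproduces $\partial^r u$ whenever $u$ is already a spline.

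For (c) we use the $L^2$-orthogonality. For $0\le s\le r-1$ we write
\begin{equation*}
\partial^{s}(u-\Pi^{(r)}u)(1) = \int_0^1 \frac{(1-y)^{r-1-s}}{(r-1-s)!}\,\partial^{r}(u-\Pi^{(r)}u)(y)\,\mathrm dy ,
\end{equation*}
which is valid because all derivatives of lower order vanish at $0$. The kernel $(1-y)^{r-1-s}$ is a polynomial of degree at most $r-1\le p-r$, where the last inequality is exactly the assumption $p\ge 2r-1$. Hence the kernel lies in $S_{p-r,k-r,Z}$, and the integral vanishes by orthogonality.

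The estimate (d) is the main obstacle, and here we rely on the $p$-robust results of \citeRefs{sande2022ritz,S19}. The plan proceeds in three steps.
\begin{enumerate}
\item \textbf{Case $t=r$.} The error is the $L^2$-projection error of $\partial^r u$ onto $S_{p-r,k-r,Z}$. The explicit $(h/\pi)^{s-r}$ bound for $\partial^r u\in\mathcal H^{s-r}$ is the Sande--Manni--Speleers estimate for spline $L^2$-projections of maximal order. Since our spaces have reduced smoothness $k$, we argue that they contain the smoother spaces covered by that estimate, so the bound carries over. We expect the broken regularity combined with $u\in H^{\min\{s,k+1\}}$ to be handled exactly there.
\item \textbf{Case $t<r$, via an Aubin--Nitsche duality argument.} Let $e=u-\Pi^{(r)}u$. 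Because $e$ has Hermite data vanishing at both ends (using (b) and (c) applied to the reduced degrees), we can write $\partial^t e$ through repeated integration. Each order of integration gains a factor $h/\pi$: this comes from orthogonality of $\partial^r e$ against $S_{p-r,\cdot}$, combined with the approximation of the dual solution $\partial^{r-t}$-antiderivatives in the same space.
\item \textbf{Role of $t\ge 2r-p-1$.} This is precisely the condition under which the dual approximation spaces still have nonnegative degree, so that each duality step makes sense.
\end{enumerate}
Carefully tracking that the constant stays exactly $\pi^{-(s-t)}$, with no $p$-dependence, is the delicate point. Here we cite the corresponding theorem of \citeOneRef{sande2022ritz} rather than reprove it.
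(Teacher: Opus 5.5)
Your parts (a)--(c) are sound. The recursive definition yields the projector characterized by $H^r$-orthogonality and left Hermite interpolation, which is the one the paper takes from \citeOneRef{sande2022ritz}. You also verify the projector property correctly. Your Taylor-remainder argument for (c) is a valid self-contained substitute for citing Lemma~2 there.

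\textbf{The gap is in (d) when $s>k+1$.} The result you fall back on, \citeRefDetail{Theorem~3}{sande2022ritz}, bounds the error for $u\in H^s(0,1)$ in terms of the unbroken seminorm $|u|_{H^s(0,1)}$. It therefore only settles the case $s\le k+1$, where $H^{\min\{s,k+1\}}(0,1)\cap\mathcal H^s(Z;(0,1))=H^s(0,1)$.

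For $k+1<s\le p+1$ the lemma admits functions that are only $H^{k+1}$ globally, with jumps of $\partial^m u$, $k+1\le m\le s-1$, at the breakpoints. These are measured in the broken seminorm. Your expectation that this is ``handled exactly there'' is unsupported. Your containment argument for $t=r$ also breaks down in exactly this case: the maximally smooth subspace cannot resolve those jumps, and its $(h/\pi)^{s-r}$ estimate would require $\partial^r u\in H^{s-r}(0,1)$.

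A side remark: your duality sketch for $t<r$ relies on right-end interpolation. That needs $p\ge 2r-1$, which $t\ge 2r-p-1$ does not imply (e.g.\ $p=2$, $k=1$, $r=2$, $t=1$). Since you end up citing the theorem anyway, this does not affect the argument.

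\textbf{The missing step is a spline subtraction,} and your projector property makes it immediate. Set $c_{\ell,m}:=\partial^m u(z_\ell^+)-\partial^m u(z_\ell^-)$ and
\[
g(x):=\sum_{\ell=1}^{n-1}\sum_{m=k+1}^{s-1} c_{\ell,m}\,\frac{(x-z_\ell)_+^m}{m!}.
\]
Each truncated power lies in $C^{m-1}\subseteq C^k$ and has a unit jump only in its $m$-th derivative. Hence $g\in S_{s-1,k,Z}\subseteq S_{p,k,Z}$, using $s-1\le p$.

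The difference $w:=u-g$ is piecewise $H^s$, and $\partial^m w$ is continuous across all breakpoints for $m\le s-1$. Therefore $w\in H^s(0,1)$.

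Because $\Pi^{(r)}_{p,k,Z}g=g$, you get $u-\Pi^{(r)}_{p,k,Z}u=w-\Pi^{(r)}_{p,k,Z}w$. Because $g$ is a polynomial of degree at most $s-1$ on every element, $|w|_{H^s(0,1)}=|u|_{\mathcal H^s(Z;(0,1))}$. Applying the unbroken estimate to $w$ then gives (d). This is exactly how the paper argues, following \citeRefDetail{Lemma~6}{bressan2019approximation}.
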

\begin{proof}
    We use the projector from~\citeRefDetail{Eq.~(6)}{sande2022ritz}.
    For that projector, statement (a) is given as~\citeRefDetail{Eq.~(7)}{sande2022ritz},
    statement (b) is given as~\citeRefDetail{Eq.~(8)}{sande2022ritz},
    statement (c) is given as~\citeRefDetail{Lemma~2}{sande2022ritz}. 
    Moreover, \citeRefDetail{Theorem~3}{sande2022ritz} states
    \begin{equation}\label{eq:lem:1Drst-nonbroken}
        |u-\Pi_{p,k,Z}^{(r)} u|_{H^t(0,1)} \le  \left(\frac{h_Z}{\pi}\right)^{s-t} |u|_{{H}^s(0,1)}
        \quad \mbox{for all}\quad u\in  H^s(0,1),
    \end{equation}
    which finishes the proof for $s\le k+1$. 
    To handle the case $s>k+1$, we introduce correction terms similar to~\citeRefDetail{Lemma 6}{bressan2019approximation}. Specifically, there exists a spline $g \in S_{s-1,k,Z}\subseteq S_{p,k,Z}$, such that $w\coloneq u-g \in H^s(0,1)$.
    Thus, we can apply~\eqref{eq:lem:1Drst-nonbroken} and obtain
    \begin{equation} \label{eq:global_w_bound}
    \begin{aligned}
        |u - \Pi_{p,k,Z}^{(r)} u|_{H^t(0,1)} &=|(w + g) - \Pi_{p,k,Z}^{(r)} (w + g)|_{H^t(0,1)}
        \\ &=|w - \Pi_{p,k,Z}^{(r)} w|_{H^t(0,1)} \le \left(\frac{h_Z}{\pi}\right)^{s-t} |w|_{H^s(0,1)}.
        \end{aligned}
    \end{equation}
     On any element $E \in \mathcal{E}_Z$, the function $g$ is a polynomial of degree at most $s-1$. Consequently, $\partial^s g|_E \equiv 0$. This yields 
    \begin{equation*}
        |w|_{H^s(0,1)}^2 = \sum_{E \in \mathcal{E}_Z} \|\partial^s w\|_{L^2(E)}^2 = \sum_{E \in \mathcal{E}_Z} \|\partial^s u\|_{L^2(E)}^2 = |u|_{H^s(Z;(0,1))}^2.
    \end{equation*}
    Substituting this norm equivalence into \eqref{eq:global_w_bound} yields the desired upper bound \eqref{eq:lem:1Drst}, which completes the proof.
    \end{proof}
The constant $1/\pi^{s-t}$ in~\eqref{eq:lem:1Drst} is that for splines of maximum smoothness. For $k\le p-2$, there exist sharper but less elegant constants, cf. \citeRefs{S19,sande2022ritz}. 
The following lemma establishes a scaled one-dimensional trace inequality for the boundary value of a function by its $L^2$ and $H^1$ norms.

\begin{lemma}\label{lem:1d trace}
    For all $u\in H^1(0,1)$ and all $\eta \in (0,1]$, we have
    \begin{equation*}
        |u(0)| \le \sqrt{2\eta^{-1} \|u\|_{L^2(0,1)}^2 + \eta |u|_{H^1(0,1)}^2}
        \le \sqrt{2}\eta^{-1/2} \|u\|_{L^2(0,1)} + \eta^{1/2} |u|_{H^1(0,1)}.
    \end{equation*}
\end{lemma}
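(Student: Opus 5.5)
The plan is to prove the first (squared) inequality; the second then follows from $\sqrt{a+b}\le\sqrt a+\sqrt b$. By density of $C^1([0,1])$ in $H^1(0,1)$ and continuity of point evaluation in $H^1(0,1)$, we may assume $u\in C^1([0,1])$.

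Fix $\eta\in(0,1]$ and $x\in(0,\eta]$. The fundamental theorem of calculus gives
\begin{equation*}
u(0)=u(x)-\int_0^x u'(y)\,\mathrm dy .
\end{equation*}
Applying $(a+b)^2\le 2a^2+2b^2$ and Cauchy--Schwarz to the integral yields
\begin{equation*}
|u(0)|^2\le 2|u(x)|^2+2x\int_0^x|u'(y)|^2\,\mathrm dy\le 2|u(x)|^2+2\eta\,|u|_{H^1(0,1)}^2 .
\end{equation*}
Averaging over $x\in(0,\eta)$ then gives
\begin{equation*}
|u(0)|^2\le 2\eta^{-1}\|u\|_{L^2(0,\eta)}^2+2\eta|u|_{H^1(0,1)}^2 .
\end{equation*}
This has the right structure but the factor $2$ in front of the $H^1$ term is too large.

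To obtain the constant $1$ in front of $\eta|u|_{H^1}^2$, we refine the estimate and expect this to be the only real obstacle. We replace the crude split by a weighted identity. Set $\phi(x)=(1-x/\eta)_+$, so that $\phi(0)=1$, $\phi(\eta)=0$ and $\phi'=-1/\eta$ on $(0,\eta)$. Then
\begin{equation*}
u(0)^2=-\int_0^\eta(\phi u^2)'\,\mathrm dx=\frac1\eta\int_0^\eta u^2\,\mathrm dx-2\int_0^\eta\phi\,u\,u'\,\mathrm dx .
\end{equation*}
The cross term is bounded using $\phi\le1$ and Young's inequality: $2|uu'|\le \eta^{-1}u^2+\eta|u'|^2$. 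This gives
\begin{equation*}
|u(0)|^2\le \frac{2}{\eta}\|u\|_{L^2(0,\eta)}^2+\eta|u|_{H^1(0,\eta)}^2 ,
\end{equation*}
which is even slightly stronger than the first stated inequality, since the norms are taken only over $(0,\eta)\subseteq(0,1)$. Enlarging the intervals to $(0,1)$ finishes the proof. The restriction $\eta\le1$ is used only so that $(0,\eta)\subseteq(0,1)$.
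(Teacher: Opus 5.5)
Your proof is correct and is essentially the argument the paper's one-line proof indicates. It applies the fundamental theorem of calculus to $u^2$ rather than to $u$, followed by Cauchy--Schwarz/Young with weight $\eta$; your cutoff identity is exactly the average over $x\in(0,\eta)$ of $u(0)^2=u(x)^2-2\int_0^x u\,u'\,\mathrm dy$. Working on $(0,\eta)$ instead of averaging over all of $(0,1)$ (where one would use $1+\eta^{-1}\le 2\eta^{-1}$) also gives the slightly sharper, localized bound you point out.
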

\begin{proof}
    Applying the fundamental theorem of calculus, the Cauchy--Schwarz and Young's inequalities yields the desired result.
\end{proof}

Next, we establish the existence of localized spline functions that interpolate boundary derivatives up to order two and satisfy corresponding bounds on their Sobolev semi-norms.
\begin{lemma}\label{lem:estimPhi}
    Let $p\ge3$ be an integer and $Z$ be a partition such that $h_Z\le 1/(p+1)$.
    For all integers $s$ with $0\leq s\leq 2$,
    there exist functions $\phi_{p,Z}^{(s)} \in S_{p,p-1,Z}$ such that
    \begin{equation}\label{eq:phi}
        \partial^t \phi_{p,Z}^{(s)}(0) = \vardelta_{s,t},\quad
        \partial^t \phi_{p,Z}^{(s)}(1) = 0,
        \quad\mbox{and}\quad 
        |\phi_{p,Z}^{(s)}|_{H^t(0,1)}^2 \le C h_Z^{1+2(s-t)}
    \end{equation}
    for all integers $t$ with $0\leq t\leq 2$.
\end{lemma}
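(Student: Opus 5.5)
Since $p\ge3$ and $h_Z\le 1/(p+1)$, the first element $(z_0,z_1)=(0,z_1)$ has length at most $h_Z$. However, $S_{p,p-1,Z}$ has only $C^{p-1}$ smoothness at the knots, so a polynomial supported on $[0,z_1]$ cannot be used: vanishing to order $p-1$ at $z_1$ would force it to vanish identically there. The natural candidate is therefore a B-spline-type construction of maximal smoothness supported on roughly $p+1$ elements, i.e.\ on $[0,z_{p+1}]$ or the whole interval. This is not localized to $O(h_Z)$, since $z_{p+1}$ can be as large as $(p+1)h_Z\le 1$. Instead, I would build the functions from the leftmost B-splines of the open knot vector with $(p+1)$-fold knot at $0$ and simple interior knots $Z$. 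The B-splines $B_1,B_2,B_3$ (the first three) satisfy $\partial^t B_m(0)=0$ for $t<m-1$ and are supported in $[0,z_m]$ (with $z_0=0$ repeated). Since $m\le 3\le p$ and all interior knots are simple, their supports end at $z_1,z_2,z_3$ respectively (or at $1$ if $n<m$). In that case we would have $p\ge3$ and $h_Z\le 1/(p+1)$ forcing $n\ge p+1\ge4$, so the supports end at interior knots $z_m\le 3h_Z<1$. All derivatives at $1$ then vanish, which gives $\partial^t\phi(1)=0$.

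Next I would define $\phi^{(s)}$ as the linear combination of $B_1,B_2,B_3$ solving the lower-triangular $3\times3$ system $\partial^t\phi^{(s)}(0)=\delta_{s,t}$ for $t=0,1,2$. The derivatives at $0$ follow from the standard B-spline derivative formula: $B_1(0)=1$, $B_2'(0)=p/z_1$, $B_2(0)=0$, and $B_3''(0)=p(p-1)/(z_1z_2)$ (more precisely $p(p-1)/(z_1 \min(z_2,\cdot))$ in terms of the knot differences). The triangular solve gives coefficients of size $c_1=\mathcal O(1)$, $c_2=\mathcal O(h/p)$ and $c_3=\mathcal O(h^2/p^2)$, appropriately scaled for each $s$, where $h$ is the local knot spacing.

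The main obstacle is the norm bound $|\phi^{(s)}|_{H^t}^2\le C h_Z^{1+2(s-t)}$ with $C$ independent of $p$ and of mesh quasi-uniformity. A naive inverse estimate gives $|B_m|_{H^t}\lesssim (p^2/z_m)^t \sqrt{z_m}$, and the $p$-powers must cancel. Moreover, $z_1$ may be much smaller than $h_Z$, which makes the coefficients depend on $z_1$ instead of $h_Z$. My plan for this step would therefore be to avoid relying on the actual mesh near $0$. I would instead work on the reference interval $[0,1]$ with the scaled variable $x\mapsto x/((p+1)h_Z)$ and use polynomial-type functions such as $\phi^{(s)}(x)=\frac{x^s}{s!}(1-x/a)^{p}$ restricted to $[0,a]$ and extended by zero, with $a$ chosen as a knot $\approx h_Z$ so that smoothness $p-1$ at $a$ holds. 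Such functions have derivatives up to order $p-1$ vanishing at $a$, so they lie in $S_{p,p-1,Z}$ provided no knots lie inside $(0,a)$. Since $z_1$ may be small, this may require taking $a=z_1$. In that case I would check that the bound is still fine, since smaller support only helps for $t\le s$, while for $t>s$ the concern is blow-up like $a^{1+2(s-t)}$, which is bad. Because of this tension, I would ultimately fall back on the B-spline combination and estimate $|\phi|_{H^t}$ using the known $p$-robust bounds for B-spline derivatives together with the constraint $h_Z\le1/(p+1)$, which absorbs the factors of $p$ (e.g.\ $p^2h_Z^{1/2}\cdot$ stuff). I expect this bookkeeping to be where the real difficulty lies.
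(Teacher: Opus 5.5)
Your fallback construction does not satisfy the norm bound with a $p$-independent constant. This is a structural defect, not a matter of bookkeeping. Write $h=h_Z$ and $\psi_{p,\eta}(\xi)=(\max\{0,1-\xi/\eta\})^p$. The functions $B_1,B_2,B_3$ span the same three-dimensional space as $\psi_{p,z_1},\psi_{p,z_2},\psi_{p,z_3}$: both are bases of the degree-$p$, $C^{p-1}$ splines supported in $[0,z_3]$. The three conditions at $0$ determine the combination uniquely, so your $\phi^{(s)}$ is exactly the paper's formula \eqref{eq:phi def} with $\eta_\ell=z_\ell$.

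With this choice everything varies on the length scale $z_\ell/p$ rather than $h$; already $|\psi_{p,z_1}|_{H^1}^2=p^2/((2p-1)z_1)$. Concretely, on a uniform mesh $z_\ell=\ell h$ your triangular solve gives $\phi^{(0)}=B_1+B_2+B_3=\tfrac12\psi_{p,h}-4\psi_{p,2h}+\tfrac92\psi_{p,3h}$. Hence $\phi^{(0)}(yh/p)\to F(y):=\tfrac12e^{-y}-4e^{-y/2}+\tfrac92e^{-y/3}$ as $p\to\infty$. For large $p$ this gives $|\phi^{(0)}|_{H^1}^2\approx c_1\,p/h$ and $|\phi^{(0)}|_{H^2}^2\approx c_2\,p^3/h^3$, with $c_1=\int_0^\infty F'(y)^2\,\mathrm{d}y>0$ and $c_2=\int_0^\infty F''(y)^2\,\mathrm{d}y>0$. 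This contradicts \eqref{eq:phi}. The hypothesis $h\le 1/(p+1)$ cannot absorb these factors: it only gives $p\lesssim h^{-1}$, which turns $p/h$ into $h^{-2}$, a worse power of $h$.

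The missing idea is that the support should have length of order $p\,h$, not $h$ — the option you dismissed in your first paragraph. A function like $\psi_{p,\eta}$ with support $[0,\eta]$ decays like $e^{-p\xi/\eta}$, i.e., on the scale $\eta/p$. Choosing $\eta\sim ph$ is exactly what makes that scale equal to $h$.

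Your objections to this choice do not hold. A degree-$p$ polynomial vanishing to order $p-1$ at $\eta$ is a multiple of $(\eta-\xi)^p$ (your own $B_1$), not zero. Knots of $Z$ inside $(0,\eta)$ are irrelevant, since one polynomial piece may cover many elements. Hence $\psi_{p,\eta}\in S_{p,p-1,Z}$ for every positive $\eta\in Z$. Your other candidate $\tfrac{x^s}{s!}(1-x/a)^p$ has degree $p+s$, so it is not in the space for $s\ge1$.

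The paper takes knots $\eta_1<\eta_2<\eta_3$ with $\eta_\ell\sim ph$ and $|\eta_\ell-\eta_{\ell'}|\gtrsim ph$, and combines them as in \eqref{eq:phi def}. This is where $h\le 1/(p+1)$ enters: it guarantees that such well-separated knots fit into $[0,1]$. Then $\|\psi_{p,\eta_\ell}\|_{L^2}^2=\eta_\ell/(2p+1)$, $|\psi_{p,\eta_\ell}|_{H^1}^2=p^2/(\eta_\ell(2p-1))$ and $|\psi_{p,\eta_\ell}|_{H^2}^2=p^2(p-1)^2/(\eta_\ell^3(2p-3))$ are all $\lesssim h^{1-2t}$ uniformly in $p$. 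Solving the $3\times 3$ system with entries $1$, $-p/\eta_\ell$, $p(p-1)/\eta_\ell^2$ yields coefficients of size $O(h^s)$, thanks to the separation. Together these give \eqref{eq:phi}.
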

\begin{proof}
    We prove the lemma by explicitly constructing the functions $\phi_{p,Z}^{(s)}$.
    Since we fix one partition $Z$, we just write $h:=h_Z$.
    For integers $\ell$ with $1\le \ell \le3$, define the points
    \begin{equation*}
        \eta_\ell := \min \left\{ z \in Z : z \ge \frac{4\ell ph}{9} \right\}
    \end{equation*}
    and note that the following inequalities hold:
    \begin{equation*}
       0 < \tfrac{4p}{9} h \le \eta_1 \le \tfrac{4p+9}{9} h < \tfrac{8p}{9} h \le \eta_2 \le \tfrac{8p+9}{9}h < \tfrac{12p}{9}h  \le \eta_3 \le \min\{ \tfrac{12p+9}9h, 1\}.
    \end{equation*}
    From these bounds, we deduce 
    \begin{equation}\label{eq:etabound}
            C p h \le \eta_\ell \le C p h,
            \qquad
            |\eta_\ell-\eta_{\ell'}| \ge Cph>0
            \qquad
            \mbox{for}
            \quad \ell \ne \ell'.
    \end{equation}
    We define functions $\psi_{p,\eta}$ via
    \begin{equation*}
        \psi_{p,\eta}(\xi):=(\max\{0,1-\xi/\eta\})^p
    \end{equation*}
    and, using these functions, we define the functions $\phi_{p,Z}^{(s)}$ by
    \begin{equation}\label{eq:phi def}
    \begin{aligned}
        \phi_{p,Z}^{(0)}(\xi)
        &:= \frac{\eta_1^2}{\Delta_{1,2}\Delta_{1,3}} \psi_{p,\eta_1}(\xi) 
        - \frac{\eta_2^2}{\Delta_{1,2}\Delta_{2,3}} \psi_{p,\eta_2}(\xi) 
        + \frac{\eta_3^2}{\Delta_{1,3}\Delta_{2,3}} \psi_{p,\eta_3}(\xi),
        \\
        \phi_{p,Z}^{(1)}(\xi) 
        &:= \frac{1}{p}\left(
        \frac{\eta_1^2(\eta_2+\eta_3)}{\Delta_{1,2}\Delta_{1,3}} \psi_{p,\eta_1}(\xi) 
        -\frac{\eta_2^2(\eta_1+\eta_3)}{\Delta_{1,2}\Delta_{2,3}} \psi_{p,\eta_2}(\xi) 
        + \frac{\eta_3^2(\eta_1+\eta_2)}{\Delta_{1,3}\Delta_{2,3}} \psi_{p,\eta_3}(\xi)
        \right),
        \\
        \phi_{p,Z}^{(2)}(\xi)
        & := \frac{\eta_1\eta_2\eta_3}{p(p-1)}\left( 
        \frac{\eta_1}{\Delta_{1,2}\Delta_{1,3}} \psi_{p,\eta_1}(\xi) 
        - \frac{\eta_2}{\Delta_{1,2}\Delta_{2,3}} \psi_{p,\eta_2}(\xi)
        + \frac{\eta_3}{\Delta_{1,3}\Delta_{2,3}} \psi_{p,\eta_3}(\xi)
        \right),
    \end{aligned}
    \end{equation}
    where $\Delta_{\ell,\ell'}:=\eta_\ell-\eta_{\ell'}$. Those functions satisfy the boundary conditions in \eqref{eq:phi} by construction.

    Applying the triangle inequality and Young's inequality, and using the bounds in \eqref{eq:etabound}, we immediately obtain from \eqref{eq:phi def} that
    \begin{equation}\label{eq:phibound}
        |\phi_{p,Z}^{(s)}|_{H^t(0,1)}^2 \le Ch^{2s} \sum_{\ell=1}^3 |\psi_{p,\eta_\ell}|_{H^t(0,1)}^2,
        \quad\mbox{for}\quad
        0\leq s,t\leq 2.
    \end{equation}
    The (semi-)norms of the functions $\psi_{p,\eta_\ell}$ can be computed explicitly. Combining this with the estimates in \eqref{eq:etabound}, we have
    \begin{equation*}
    \begin{aligned}
        \| \psi_{p,\eta_\ell} \|_{L^2}^2 = \frac{\eta_\ell}{2p+1}& \le C h,\quad
        | \psi_{p,\eta_\ell} |_{H^1}^2 = \frac{p^2}{\eta_\ell(2p-1)} \le C h^{-1},\\ \quad
        | \psi_{p,\eta_\ell} |_{H^2}^2 &= \frac{(p-1)^2p^2}{\eta_\ell^3 (2p-3)} \le C h^{-3}.
    \end{aligned}
    \end{equation*}    
    Substituting these estimates into \eqref{eq:phibound} yields the desired norm bounds stated in \eqref{eq:phi}.
    This completes the proof.
\end{proof}

The following lemma states the existence of a projector that satisfies interpolation and approximation properties like the $H^3$-orthogonal projector from Lemma~\ref{lem:1D_edge_cond} without the requirement $p \ge 5$ from Lemma~\ref{lem:1D_edge_cond} (c).
\begin{lemma}\label{lem:low p projector} 
    Let $p$ and $k$ be integers with $3 \le k+1 \le p$ and let $Z$ be a partition with $h_Z\le 1/(p+1)$.
    There is a linear operator $\Pi_{p,k,Z}^{(*)}:H^3 (0,1) \to S_{p,k,Z}$ such that the following statements hold.\\
    (a) $\Pi_{p,k,Z}^{(*)}$ interpolates at the endpoints, i.e.,
    for all for all integers $s$ with $0\le s \le 2$
    \begin{equation*}
        \partial^s \Pi_{p,k,Z}^{(*)}u(0) = \partial^su(0)
        \quad\mbox{and}\quad
        \partial^s \Pi_{p,k,Z}^{(*)}u(1) = \partial^su(1).
    \end{equation*}
    (b) For $t$ and $s$ integers with $0\le t\le 3\le s \le p+1$
    and all $u\in  H^{\min\{s,k+1\}}(0,1)\cap\mathcal{H}^s(Z;(0,1))$,
    the following error estimate holds:
    \begin{equation*}
        |u-\Pi_{p,k,Z}^{(*)} u|_{H^t(0,1)} \le C(p) h_Z^{s-t} |u|_{\mathcal H^s(Z;(0,1))} .
    \end{equation*}
\end{lemma}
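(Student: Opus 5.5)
We construct $\Pi_{p,k,Z}^{(*)}$ by taking the $H^3$-orthogonal projector from Lemma~\ref{lem:1D_edge_cond} with $r=3$ and correcting its right-end values with the localized functions of Lemma~\ref{lem:estimPhi}. Lemma~\ref{lem:1D_edge_cond} applies since $r=3\le k+1\le p$. By part (b), $\Pi_{p,k,Z}^{(3)}$ interpolates $u,u',u''$ at $0$. At $1$ it only interpolates if $p\ge 5$, so the defect there has to be repaired.

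\textbf{Definition of the operator.} Let $\overline{\phi}^{(s)}(\xi):=\phi_{p,\overline Z}^{(s)}(1-\xi)$ be the reflected functions, built over the reversed partition $\overline Z$. Then $\partial^t\overline\phi^{(s)}(1)=(-1)^t\delta_{s,t}$ and $\partial^t\overline\phi^{(s)}(0)=0$, and $\overline\phi^{(s)}\in S_{p,p-1,\overline{\overline Z}}=S_{p,p-1,Z}\subseteq S_{p,k,Z}$. Set $e:=u-\Pi^{(3)}_{p,k,Z}u$ and define
\begin{equation*}
    \Pi_{p,k,Z}^{(*)}u := \Pi^{(3)}_{p,k,Z}u + \sum_{s=0}^{2} (-1)^s\,\partial^s e(1)\,\overline\phi^{(s)}.
\end{equation*}
This operator is linear and maps into $S_{p,k,Z}$. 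Statement (a) at $\xi=1$ holds by construction. At $\xi=0$ it follows from Lemma~\ref{lem:1D_edge_cond}(b) together with the vanishing of all $\overline\phi^{(s)}$ derivatives up to order two at $0$.

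\textbf{Error estimate.} By the triangle inequality,
\begin{equation*}
    |u-\Pi^{(*)}_{p,k,Z}u|_{H^t}\le |e|_{H^t}+\sum_{s=0}^2|\partial^s e(1)|\,|\overline\phi^{(s)}|_{H^t},
\end{equation*}
and Lemma~\ref{lem:estimPhi} gives $|\overline\phi^{(s)}|_{H^t}\le Ch^{1/2+s-t}$ for $t\le2$.

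For the point values we apply Lemma~\ref{lem:1d trace} (reflected to the endpoint $1$) to $\partial^s e$ with $\eta=h$:
\begin{equation*}
    |\partial^s e(1)|\le C\bigl(h^{-1/2}|e|_{H^s}+h^{1/2}|e|_{H^{s+1}}\bigr).
\end{equation*}
Lemma~\ref{lem:1D_edge_cond}(d) with $r=3$ bounds $|e|_{H^\sigma}\le (h/\pi)^{m-\sigma}|u|_{\mathcal H^m}$ for $0\le\sigma\le 3$, where $m$ is the Sobolev order in the statement. This requires $\sigma\ge 6-p-1=5-p$; for $p\ge 5$ it is automatic. Hence $|\partial^s e(1)|\le Ch^{m-s-1/2}|u|_{\mathcal H^m}$, and each correction term is bounded by $Ch^{m-s-1/2}h^{1/2+s-t}|u|_{\mathcal H^m}=Ch^{m-t}|u|_{\mathcal H^m}$, as required.

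The case $t=3$ needs a bound on $|\overline\phi^{(s)}|_{H^3}$, which Lemma~\ref{lem:estimPhi} does not provide. It can be computed exactly as in that proof: $|\psi_{p,\eta}|_{H^3}^2\le C(p)\eta^{-5}$. This gives $|\overline\phi^{(s)}|_{H^3}\le C(p)h^{s-5/2}$, so the constant becomes $C(p)$, which the statement allows.

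\textbf{Main obstacle.} The difficulty is the restriction $t\ge 2r-p-1=5-p$ in Lemma~\ref{lem:1D_edge_cond}(d) when $p\in\{3,4\}$. For $p=3$ it excludes the $L^2$ and $H^1$ estimates of $e$, and for $p=4$ it excludes $L^2$. These are exactly the cases where the modification is needed. Since the constant may depend on $p$, the plan is to recover the lower-order bounds with a Poincaré-type argument. By Lemma~\ref{lem:1D_edge_cond}(b), $e$, $e'$, $e''$ vanish at $0$, so
\begin{equation*}
    \|e\|_{L^2}\le C|e|_{H^1}\le C|e|_{H^2},
\end{equation*}
but this only yields the lower power $h^{m-2}$ rather than $h^{m-t}$.

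The remedy is to avoid using $\Pi^{(3)}$ as the base in low-order norms. Instead, compare with $\Pi^{(1)}_{p,k,Z}u$, which satisfies optimal estimates for all $t\le 1$, and take the base to be $\Pi^{(1)}_{p,k,Z}u$ corrected by the $\phi$- and $\overline\phi$-terms at both endpoints. The interpolation at both ends is then enforced entirely by the correction functions, and the bounds on $|e|_{H^\sigma}$ for $\sigma\le 3$ follow from Lemma~\ref{lem:1D_edge_cond}(d) with $r=1$ and $r=2$, plus an inverse or trace step for the $H^3$ seminorm. This is the step I expect to need the most care: choosing a base operator that is optimal in every norm $t\le3$ simultaneously for small $p$, possibly by taking a different base for each $t$ while keeping a single linear operator through endpoint corrections.
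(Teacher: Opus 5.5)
\textbf{There is a genuine gap: the proposal stops exactly where the lemma has content, and the sketched fallback does not close it.}

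Your first construction only works where it is not needed. For $p\ge5$, Lemma~\ref{lem:1D_edge_cond}(c) gives $\partial^s e(1)=0$, so your operator is just $\Pi^{(3)}_{p,k,Z}$; this is how Corollary~\ref{coro:star} treats $p\ge5$ anyway. The lemma is needed precisely for $p\in\{3,4\}$.

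For $p\in\{3,4\}$ the base $\Pi^{(3)}_{p,k,Z}$ is suboptimal in the interior of the interval, not just at $\xi=1$. For $p=3$ one has $\partial^3\Pi^{(3)}_{p,k,Z}u=P_0u'''$, where $P_0$ is the $L^2$-projection onto piecewise constants. For smooth $u$ on a uniform mesh this gives $e'(z_j)\approx-\tfrac{h^2}{12}\bigl(u'''(z_j)-u'''(0)\bigr)$ at the breakpoints, so $|e|_{H^1}$ is of order $h^2$ rather than $h^3$. No endpoint correction can remove an error that accumulates like this. Moreover, your bounds on the coefficients $e(1)$ (and $e'(1)$ for $p=3$) rely on exactly the low-order norms that are missing.

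Your fallback does not close the gap either. With base $u_h=\Pi^{(1)}_{p,k,Z}u$ or $u_h=\Pi^{(2)}_{p,k,Z}u$, Lemma~\ref{lem:1D_edge_cond}(d) gives no $H^3$ bound, since it requires $t\le r$. Yet both the case $t=3$ and the bound on the second-derivative defects $|\partial^2(u-u_h)(x)|$, $x\in\{0,1\}$, need $|u-u_h|_{H^3(0,1)}$; the latter enters through Lemma~\ref{lem:1d trace}. With base $\Pi^{(1)}$ the first-derivative defects would likewise need an $H^2$ bound that (d) does not provide. Finally, ``a different base for each $t$'' does not define a single linear operator.

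The missing idea is to compare with an auxiliary projector and use an inverse estimate. The paper builds the operator as follows:
\begin{itemize}
\item The base is $u_h:=\Pi^{(2)}_{p,k,\widetilde Z}u$. For $p\ge3$ it already interpolates $u$ and $u'$ at both ends and is optimal for $t\le2$, so only the two $\phi^{(2)}$-corrections are needed.
\item For the analysis only, it uses $v_h:=\Pi^{(3)}_{\overline p,\overline p-1,\widetilde Z}u$ with $\overline p=\max\{p,5\}$. This function is not in the target space, but it is optimal for all $t\le3$ and interpolates $u''$ at both ends.
\item Hence $u''(x)-u_h''(x)=v_h''(x)-u_h''(x)$ for $x\in\{0,1\}$, and $|u-u_h|_{H^3}\le|u-v_h|_{H^3}+|v_h-u_h|_{H^3}$.
\item The discrete difference is controlled by $|v_h-u_h|_{H^\sigma}\le C(p)h_Z^{-\sigma}\|v_h-u_h\|_{L^2}$, followed by the triangle inequality through $u$. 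Only the optimal $L^2$ errors of the two projectors then enter.
\end{itemize}
That inverse inequality fails on general partitions. This is why the paper first replaces $Z$ by a quasi-uniform coarsening $\widetilde Z$, with element sizes between $h_Z$ and $3h_Z$ and $S_{p,k,\widetilde Z}\subseteq S_{p,k,Z}$, and builds $u_h$ and $v_h$ on $\widetilde Z$. Your remark that $|\overline\phi^{(s)}|_{H^3}$ must be computed separately for $t=3$ is correct, and it is needed in this argument as well.
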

\begin{proof}
    Let $Z$ be an arbitrary but fixed partition of $[0,1]$ with size $h_Z$. \citeRefDetail{Lemma~4.1}{ST20} guarantees
    that 
    there is a  partition
    $\widetilde Z=(\widetilde z_0,\dots, \widetilde z_m)$
    such that 
    $\min\{1,h_Z\} \le \widetilde z_i-\widetilde z_{i-1} \le 3h_Z$, $i=1,\dots,m$
    (quasi-uniformity)
    and $S_{p,k,\widetilde Z}\subseteq S_{p,k,Z}$.
    For each $u\in H^{\min\{s,k+1\}}(0,1)$, let 
    $u_h := \Pi_{p,k,\widetilde Z}^{(2)} u$
    and
    $v_h := \Pi_{\overline p,\overline p-1,\widetilde Z}^{(3)} u$, with $\overline p = \max\{p,5\}$,
    and
    $\Pi_{p,k,Z}^{(*)} u:=w_h \in S_{p,k,Z}$ via
    \begin{equation*}
        w_h(\xi)
        :=
        u_h(\xi)
        +
        (u''(0)-u_h''(0)) \phi^{(2)}_{p,Z}(\xi)
        +
        (u''(1)-u_h''(1)) \phi^{(2)}_{p,\overline Z}(1-\xi),
    \end{equation*}
    where the functions $\phi^{(2)}_{p,Z}$ and $\phi^{(2)}_{p,\overline Z}$ are as in Lemma~\ref{lem:estimPhi}
    and $\overline Z$ is the reverse of the $Z$.
    Using the triangle inequality and the estimates of that Lemma and Lemma~\ref{lem:1d trace} for $\eta=h_Z$, we obtain
    \begin{equation*}
    \begin{aligned}
        &|u-\Pi_{p,k,Z}^{(*)}u|_{H^t(0,1)}
        \\&\quad\le
        |u-u_h|_{H^t(0,1)}
        +
        |u''(0)-u_h''(0)|
        |\phi^{(2)}_{p,Z}|_{H^t(0,1)}
        +
        |u''(1)-u_h''(1)|
        |\phi^{(2)}_{p,\overline Z}|_{H^t(0,1)}
        \\
        &\quad=
        |u-u_h|_{H^t(0,1)}
        +
        |v_h''(0)-u_h''(0)|
        |\phi^{(2)}_{p,Z}|_{H^t(0,1)}
        +
        |v_h''(1)-u_h''(1)|
        |\phi^{(2)}_{p,\overline Z}|_{H^t(0,1)}
        \\
        &\quad\le
        |u-u_h|_{H^t(0,1)}
        +
        C
        h_Z^{5/2-t}
        (
        h_Z^{1/2}
        |v_h-u_h|_{H^{3}(0,1)}
        +
        h_Z^{-1/2}
        |v_h-u_h|_{H^{2}(0,1)}
        ).
    \end{aligned}
    \end{equation*}
    We can further bound $|u-u_h|_{H^t(0,1)} \leq |u-v_h|_{H^t(0,1)} + |v_h-u_h|_{H^t(0,1)}$.
    Observing $v_h-u_h\in S_{\overline p,2,\widetilde Z}$,
    a standard inverse estimate, cf.~\citeRefDetail{Theorem~3.91}{Schwab}, yields
    $|v_h-u_h|_{H^\sigma(0,1)}\le C(p) h_Z^{-\sigma} \|v_h-u_h\|_{L^2(0,1)}$
    for all integers $\sigma$ with $0\le\sigma\le3$.
    Thus, we obtain using the triangle inequality
    \begin{equation*}
    \begin{aligned}
        |u-\Pi_{p,k,Z}^{(*)}u|_{H^t(0,1)} &\leq |u-v_h|_{H^t(0,1)} + C(p)
        h_Z^{-t}\|v_h-u_h\|_{L^2(0,1)} \\
        & \leq |u-v_h|_{H^t(0,1)} + C(p)
        h_Z^{-t}(\|v_h-u\|_{L^2(0,1)} + \|u-u_h\|_{L^2(0,1)}).
    \end{aligned}
    \end{equation*}
    Lemma~\ref{lem:1D_edge_cond} (d) gives the desired upper bound.
    \end{proof}
The above lemma can be combined with Lemma~\ref{lem:1D_edge_cond} to obtain the following.
\begin{corollary}\label{coro:star}
    Let $p$ and $k$ be integers with $3 \le k+1 \le p$ and let $Z$ be a partition with $h_Z\le 1/(p+1)$.
    There is a linear operator $\Pi_{p,k,Z}^{(*)}:H^3 (0,1) \to S_{p,k,Z}$ such that the following statements hold.\\
    (a) $\Pi_{p,k,Z}^{(*)}$ interpolates at the endpoints, i.e.,
    for all for all integers $s$ with $0\le s \le 2$
    \begin{equation*}
        \partial^s \Pi_{p,k,Z}^{(*)}u(0) = \partial^su(0)
        \quad\mbox{and}\quad
        \partial^s \Pi_{p,k,Z}^{(*)}u(1) = \partial^su(1).
    \end{equation*}
    (b) For $t$ and $s$ integers with $0\le t\le 3\le s \le p+1$
    and all $u\in  H^{\min\{s,k+1\}}(0,1)\cap\mathcal{H}^s(Z;(0,1))$,
    the following error estimate holds:
    \begin{equation*}
        |u-\Pi_{p,k,Z}^{(*)} u|_{H^t(0,1)} \le C(p) h_Z^{s-t} |u|_{\mathcal H^s(Z;(0,1))} .
    \end{equation*}
\end{corollary}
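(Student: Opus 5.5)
The plan is a case split on the degree $p$: the operator is taken from Lemma~\ref{lem:1D_edge_cond} when $p$ is large enough, and from Lemma~\ref{lem:low p projector} only for the finitely many remaining degrees. The payoff is that the constant in (b) can then be chosen independent of $p$: it is either an explicit power of $1/\pi$ or a maximum over finitely many values of $C(p)$.

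\emph{Case $p\ge 5$.} Set $\Pi_{p,k,Z}^{(*)}:=\Pi_{p,k,Z}^{(3)}$, the $H^3$-orthogonal projector of Lemma~\ref{lem:1D_edge_cond} with $r=3$. The hypothesis $0\le r\le k+1\le p$ of that lemma holds because $3\le k+1\le p$. For (a), Lemma~\ref{lem:1D_edge_cond}~(b) gives interpolation of $\partial^s u(0)$ for $0\le s\le r-1=2$. Lemma~\ref{lem:1D_edge_cond}~(c) needs $p\ge 2r-1=5$, which is exactly this case, and gives the same at $\xi=1$. For (b), Lemma~\ref{lem:1D_edge_cond}~(d) requires $0\le t\le r\le s\le p+1$ and $t\ge 2r-p-1=5-p$. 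The first condition is our assumption $t\le 3\le s\le p+1$, and the second holds since $5-p\le 0\le t$. We obtain
\begin{equation*}
|u-\Pi_{p,k,Z}^{(*)}u|_{H^t(0,1)}\le \left(\tfrac{h_Z}{\pi}\right)^{s-t}|u|_{\mathcal H^s(Z;(0,1))}\le h_Z^{s-t}|u|_{\mathcal H^s(Z;(0,1))},
\end{equation*}
which is (b) with constant $1$.

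\emph{Case $p\in\{3,4\}$.} Here we use the operator $\Pi_{p,k,Z}^{(*)}$ of Lemma~\ref{lem:low p projector}, whose hypotheses coincide with ours, including $h_Z\le 1/(p+1)$. Parts (a) and (b) are then exactly the statements of that lemma, with constant $C(p)$. In this case $s\le p+1\le 5$, so only finitely many triples $(p,s,t)$ occur. Taking the maximum of the corresponding constants together with $1$ gives a constant that does not depend on $p$.

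Combining the two cases proves (a) and (b). The constant $C(p)$ in (b) can in fact be replaced by a universal constant, which is the point of combining the two lemmas.

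The only step needing care is matching the index ranges of Lemma~\ref{lem:1D_edge_cond}~(c),(d) against those of the corollary. The decisive observation is that the threshold $p\ge 2r-1=5$ in part (c) is precisely what defines the first case. I do not expect a genuine obstacle beyond this bookkeeping.
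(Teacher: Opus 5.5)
Your proposal is correct and follows the paper's proof exactly. The paper also uses Lemma~\ref{lem:low p projector} for $p\in\{3,4\}$, and for $p\ge 5$ it takes $\Pi^{(3)}_{p,k,Z}$ together with Lemma~\ref{lem:1D_edge_cond}~(b)--(d), with the same index checks you make ($r=3$, $p\ge 2r-1=5$, $t\ge 5-p$). Your remark that the constant can be taken independent of $p$ matches the paper's comment that only finitely many values of $p$ occur in the low-degree case.
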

\begin{proof}
    For $p\in\{3,4\}$, this follows directly from Lemma~\ref{lem:low p projector} and the dependence on $p$ is not denoted, since $p$ can only have finitely many different values.
    For larger values of $p$, choose
    $\Pi_{p,k,Z}^{(*)}:= \Pi_{p,k,Z}^{(3)}$.
    Then, Lemma~\ref{lem:1D_edge_cond} (b), (c) and~(d) give the desired statements.
\end{proof}

\subsection{Estimates for tensor-product splines}\label{subsec:tensor-estim}
 
We construct a projector into $S_{p,k,\mathbf{Z}}$ for $\mathbf Z=(Z_1,Z_2)$ analogously to \citeOneRef{T18} using the idea of tensor-product projection. 
First, we define the projectors $\widehat\Pi^{(r)}_{p,k,\mathbf Z,j}$, $r\ge0$ integer, $j\in\{1,2,3,4\}$, by
\begin{equation}\label{eq:one-directional-projectors}
\begin{aligned}
    &(\widehat\Pi^{(r)}_{p,k,\mathbf Z,1}u) (\cdot,\xi_2):=(\widehat\Pi^{(r)}_{p,k,\mathbf Z,3}u) (\cdot,\xi_2):=\Pi^{(r)}_{p,k,Z_1} (u(\cdot,\xi_2)) \; \text{ for all }\ \xi_2 \in [0,1],\\
    &(\widehat\Pi^{(r)}_{p,k,\mathbf Z,2}u) (\xi_1,\cdot):=(\widehat\Pi^{(r)}_{p,k,\mathbf Z,4}u) (\xi_1,\cdot):=\Pi^{(r)}_{p,k,Z_2} (u(\xi_1,\cdot)) \; \text{ for all }\ \xi_1 \in [0,1]
\end{aligned}
\end{equation}
and set
\begin{equation}\label{eq:tensor-product-projector}
    \tpprojector := \widehat\Pi^{(r)}_{p,k,\mathbf Z,1} \widehat\Pi^{(r)}_{p,k,\mathbf Z,2}.
\end{equation}
For $u\in H^{r+2}(\widehat \Omega)$, the Sobolev embedding theorem \citeRefDetail{Theorem~4.12, Part II}{Adams}
guarantees $u\in C^r(\widehat\Omega)$. As in \citeOneRef{T18}, these projectors commute
and $\tpprojector$ maps into $S_{p,k,\mathbf{Z}}$, i.e., for all $u\in H^{r+2}(\widehat\Omega)$, we have
\begin{equation}\label{eq:2Dinl}
        \tpprojector u 
        = \widehat\Pi^{(r)}_{p,k,\mathbf Z,1} \widehat\Pi^{(r)}_{p,k,\mathbf Z,2}u 
        = \widehat\Pi^{(r)}_{p,k,\mathbf Z,2} \widehat\Pi^{(r)}_{p,k,\mathbf Z,1} u
        \in S_{p,k,\mathbf{Z}}.
\end{equation}
The following lemma provides the main approximation error estimate for the tensor-product projector $\tpprojector$, which is defined for functions from the bent Sobolev space, cf. \citeOneRef{Bazilevs2006},
\begin{equation*}
    \mathcal{H}^{s,\ell}(\mathbf Z;\widehat\Omega) := \left\{ u\in \mathcal H^s(\mathbf Z;\widehat\Omega) : \; \partial_1^{t_1}\partial_2^{t_2} u \in L^2(\widehat\Omega) \; \mbox{for all} \quad \begin{array}{l}
    (t_1,t_2) \in\{0,\ldots,\ell\}^2 \\
    \mbox{with }t_1+t_2 \leq s
    \end{array}
    \right\},
\end{equation*}
which satisfies $\mathcal{H}^{s,\ell}(\mathbf Z;\widehat\Omega) = H^s(\widehat\Omega)$ if $s\leq \ell$.
\begin{lemma}\label{lem:M2D24}
    Let $p$, $k$, $r$, $t_1$, $t_2$, and $s$ be integers with
    $0\le t_1,t_2 \le r\le k+1\le p$ and
    $2r\leq s \leq p+1$.
    Then, we have
    \begin{equation*}
        \left\|\partial_1^{t_1}\partial_2^{t_2} \left(u- \tpprojector u\right)\right\|_{L^2(\widehat\Omega)} \le C h_{\mathbf Z}^{s-t_1-t_2} |u|_{\mathcal H^s(\mathbf Z;\widehat \Omega)}
        \quad\mbox{for all}\quad u\in \mathcal{H}^{s,k+1}(\mathbf Z;\widehat\Omega).
    \end{equation*}
\end{lemma}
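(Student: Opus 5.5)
The natural route is the standard tensor-product splitting, as in \citeOneRef{T18}. Abbreviate $\Pi_1:=\widehat\Pi^{(r)}_{p,k,\mathbf Z,1}$, $\Pi_2:=\widehat\Pi^{(r)}_{p,k,\mathbf Z,2}$ and $h:=h_{\mathbf Z}$. We write
\begin{equation*}
    u-\tpprojector u = (u-\Pi_1 u) + \Pi_1(u-\Pi_2 u)
    = (u-\Pi_1 u) + (u-\Pi_2 u) - (I-\Pi_1)(u-\Pi_2 u).
\end{equation*}
We then estimate each term with $\partial_1^{t_1}\partial_2^{t_2}$ applied. Since $\Pi_1$ acts only in $\xi_1$, it commutes with $\partial_2$, and analogously for $\Pi_2$ and $\partial_1$. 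The first two terms are handled fibrewise by Lemma~\ref{lem:1D_edge_cond}(d). For fixed $\xi_2$ we have
\begin{equation*}
    \partial_1^{t_1}\partial_2^{t_2}(u-\Pi_1u)(\cdot,\xi_2)
    = \partial_1^{t_1}\bigl(I-\Pi^{(r)}_{p,k,Z_1}\bigr)\bigl(\partial_2^{t_2}u(\cdot,\xi_2)\bigr).
\end{equation*}
We apply \eqref{eq:lem:1Drst} with $t=t_1$ and Sobolev index $s-t_2$. This gives a bound $(h/\pi)^{s-t_1-t_2}\,\|\partial_1^{s-t_2}\partial_2^{t_2}u(\cdot,\xi_2)\|$ in the broken norm on $Z_1$. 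Squaring and integrating in $\xi_2$ bounds this term by $h^{s-t_1-t_2}|u|_{\mathcal H^s(\mathbf Z;\widehat\Omega)}$. The conditions of Lemma~\ref{lem:1D_edge_cond}(d) must be checked.
\begin{itemize}
    \item $t_1\le r\le s-t_2$ holds because $t_2\le r$ and $2r\le s$. This is exactly where $2r\le s$ is used.
    \item $s-t_2\le p+1$ holds trivially.
    \item $t_1\ge 2r-p-1$ holds since $2r\le p+1$, by $r\le k+1\le p$ and $2r\le s\le p+1$.
    \item The fibre regularity $\partial_2^{t_2}u(\cdot,\xi_2)\in H^{\min\{s-t_2,k+1\}}\cap\mathcal H^{s-t_2}(Z_1)$ holds for a.e.\ $\xi_2$. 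This follows from $u\in\mathcal H^{s,k+1}(\mathbf Z;\widehat\Omega)$: mixed derivatives with each order $\le k+1$ are globally $L^2$, and elementwise $H^s$ gives the broken part. Because $t_2\le r\le k+1$, the needed global derivatives $\partial_1^{a}\partial_2^{t_2}$ with $a\le \min\{s-t_2,k+1\}$ are admissible.
\end{itemize}
The second term is symmetric.

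The mixed term $(I-\Pi_1)(I-\Pi_2)u$ is where the product structure is exploited. Fibrewise in $\xi_1$ we apply Lemma~\ref{lem:1D_edge_cond}(d) with $t=t_1$ and Sobolev index $r$, getting a factor $h^{r-t_1}$ times $\|\partial_1^{r}\partial_2^{t_2}(I-\Pi_2)u\|_{L^2}$. Next we commute $\partial_1^r$ with $\Pi_2$. We then apply the lemma in $\xi_2$ with $t=t_2$ and Sobolev index $s-r\ge r\ge t_2$, giving $h^{s-r-t_2}\|\partial_1^{r}\partial_2^{s-r}u\|$ in the broken norm. The total power is $h^{s-t_1-t_2}$, as required. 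The condition $s-r\ge r$ is again $2r\le s$, and $s-r\le p+1$ is clear. The regularity $\partial_1^r u(\xi_1,\cdot)\in H^{\min\{s-r,k+1\}}$ is fine because $r\le k+1$.

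The main obstacle is purely technical: justifying the fibrewise application and the commutation of derivatives with the one-directional projectors for functions that are only piecewise $H^s$. Concretely, we must show that $\Pi_2 u$ has the mixed derivatives $\partial_1^{a}$ in $L^2$ and that $\partial_1^a\Pi_2u=\Pi_2\partial_1^a u$. We would argue by linearity and boundedness of $\Pi^{(r)}_{p,k,Z_2}$ on $H^r(0,1)$, treating $\Pi_2$ as a bounded operator on $L^2(0,1;H^r(0,1))$ with $\xi_1$ as a parameter. On each element strip in $\xi_1$ this reduces to a difference-quotient/density argument, and the required global regularity across strips is supplied by the definition of $\mathcal H^{s,k+1}$. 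Finally, summing the squared fibre estimates over elements recovers the broken seminorm, since each univariate broken seminorm is an element sum and the element sets are products $\elemSet_{Z_1}\times\elemSet_{Z_2}$.
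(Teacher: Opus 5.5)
Your proposal is correct and follows essentially the same route as the paper: the three-term splitting $I-\Pi_1\Pi_2=(I-\Pi_1)+(I-\Pi_2)-(I-\Pi_1)(I-\Pi_2)$, with each term bounded fibrewise by Lemma~\ref{lem:1D_edge_cond}(d) and $2r\le s$ used exactly where you place it. The only cosmetic difference is the order in the mixed term: you apply the $\xi_1$-projector first (index $r$) and then the $\xi_2$-projector (index $s-r$), ending at $\partial_1^{r}\partial_2^{s-r}u$, whereas the paper does it the other way round and ends at $\partial_1^{s-r}\partial_2^{r}u$; your explicit check of fibre regularity and of the commutation with the one-directional projectors is more careful than the paper, which leaves these steps implicit.
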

\begin{proof}
    Let $u\in \mathcal{H}^{s,k+1}(\mathbf Z;\widehat\Omega)$ be arbitrary but fixed. To simplify the notation, let $\tpprojectorsimple := \tpprojector$ and $\tpprojectorsimpleI{j} := \widehat \Pi^{(r)}_{p,k,\mathbf{Z},j}$, for $j\in\{1,2\}$. Further, we denote by $\|\cdot\|_{L^2(\mathbf Z;\widehat\Omega)}$ the $\ell_2$-norm of all element-wise $L^2$-norms, i.e.,
    \begin{equation*}
        \|u\|_{L^2(\mathbf Z;\widehat\Omega)} := \left( \sum_{E\in\elemSet_{\mathbf Z}}\| u \|_{L^2(E)}^2 \right)^{1/2}.
    \end{equation*}
    By applying the projector $\Pi^{(r)}_{p,k,Z_1}$ to $u(\cdot,\xi_2)$, Lemma~\ref{lem:1D_edge_cond} (d) implies for all integers $m$, $t'$ and $s'$ with $0\leq t'\leq r\leq s'\leq p+1$ and $0\le m\le s-s'$,
    \begin{multline}\label{eq:lem:M2D24:proof0}
    \| \partial_1^{t'} \partial_2^m (u-\tpprojectorsimpleI{1} u) \|_{L^2(\mathbf Z;\widehat\Omega)}^2
    =\sum_{E\in\elemSet_{\mathbf Z}}\int_{E_2} \| \partial^{t'} (I-\Pi^{(r)}_{p,k,Z_1}) w(\cdot,\xi_2)\|_{L^2(E_1)}^2\mathrm{d}\xi_2\\
    \le C h_{\mathbf Z}^{2(s'-t')} \sum_{E\in\elemSet_{\mathbf Z}}\int_{E_2}|w(\cdot,\xi_2)|^2_{H^{s'}(E_1)}\mathrm{d}\xi_2
    =
    C h_{\mathbf Z}^{2(s'-t')} \| \partial_1^{s'} \partial_2^m u \|_{L^2(\mathbf Z;\widehat\Omega)}^2 ,
    \end{multline}
    where $w:=\partial_2^m u$, element-wise. 
    The condition $t'\ge 2r-p-1$ from Lemma~\ref{lem:1D_edge_cond} (d) follows since we assume $2r\le p+1$.
    By symmetry, we also have
    \begin{equation}\label{eq:lem:M2D24:proof0b}
        \| \partial_1^m \partial_2^{t'}  (u-\tpprojectorsimpleI{2} u) \|_{L^2(\mathbf Z;\widehat\Omega)}
        \le
        C h_{\mathbf Z}^{s'-t'}
        \| \partial_1^m  \partial_2^{s'} u \|_{L^2(\mathbf Z;\widehat\Omega)}.
    \end{equation}
    For any derivative $\partial_{1}^{t_1}\partial_{2}^{t_2}$, with $0\le t_1,t_2\leq r$, we obtain
    using the identity \mbox{$I-\tpprojectorsimpleI{2}\tpprojectorsimpleI{1} = (I-\tpprojectorsimpleI{2}) + (I - \tpprojectorsimpleI{1})-(I-\tpprojectorsimpleI{2})(I-\tpprojectorsimpleI{1})$}, the triangle inequality
    \begin{align}\nonumber 
    \| \partial_{1}^{t_1}\partial_{2}^{t_2} (u - \tpprojectorsimple u) \|_{L^2(\widehat{\Omega})}
    &\leq \ \| \partial_{1}^{t_1}\partial_{2}^{t_2} (I - \tpprojectorsimpleI{2}) u \|_{L^2(\widehat{\Omega})}
     + \| \partial_{1}^{t_1}\partial_{2}^{t_2} (I - \tpprojectorsimpleI{1}) u \|_{L^2(\widehat{\Omega})} 
     \\&\qquad\quad + \| \partial_{1}^{t_1}\partial_{2}^{t_2} (I-\tpprojectorsimpleI{2}) (I - \tpprojectorsimpleI{1}) u \|_{L^2(\widehat{\Omega})}.\label{eq:lem:M2D24:proof5}
    \end{align}
    To estimate the first summand, we use
    \eqref{eq:lem:M2D24:proof0b} with $t':=t_2$, $s':=s-t_1$, and $m:=t_1$
    to obtain
    \begin{equation}\label{eq:lem:M2D24:proof5a}
        \| \partial_{1}^{t_1}\partial_{2}^{t_2} (I - \tpprojectorsimpleI{2}) u \|_{L^2(\widehat{\Omega})} \leq C h_{\mathbf Z}^{s-t_1-t_2} \| \partial_1^{t_1}  \partial_2^{s-t_1} u \|_{L^2(\mathbf Z;\widehat\Omega)}.
    \end{equation}
    For the second one, we
    use~\eqref{eq:lem:M2D24:proof0} with $t':=t_1$, $s':=s-t_2$, and $m:=t_2$ to obtain
    \begin{equation}\label{eq:lem:M2D24:proof5b}
        \| \partial_{1}^{t_1}\partial_{2}^{t_2} (I - \tpprojectorsimpleI{1}) u \|_{L^2(\widehat{\Omega})} \leq C h_{\mathbf Z}^{s-t_1-t_2} \| \partial_1^{s-t_2}  \partial_2^{t_2} u \|_{L^2(\mathbf Z;\widehat\Omega)}.
    \end{equation}
    For the third summand, we first
    use~\eqref{eq:lem:M2D24:proof0b} with $t':=t_2$, $s':=r$, and $m:=t_1$
    and then use~\eqref{eq:lem:M2D24:proof0} with $t':=t_1$, $s':=s-r$, and $m:=r$, to obtain
    \begin{align}\nonumber
        \| \partial_{1}^{t_1}\partial_{2}^{t_2} (I-\tpprojectorsimpleI{2}) (I - \tpprojectorsimpleI{1}) u \|_{L^2(\widehat{\Omega})}
        &\le
        C h_{\mathbf Z}^{r-t_2} \| \partial_{1}^{t_1}\partial_{2}^{r} (I - \tpprojectorsimpleI{1}) u \|_{L^2(\mathbf Z;\widehat{\Omega})}
        \\\label{eq:lem:M2D24:proof5c}
        &\le C h_{\mathbf Z}^{s-t_1-t_2} \| \partial_1^{s-r}  \partial_2^{r} u \|_{L^2(\mathbf Z;\widehat\Omega)}.
    \end{align}
    By substituting
    \eqref{eq:lem:M2D24:proof5a},
    \eqref{eq:lem:M2D24:proof5b}, and
    \eqref{eq:lem:M2D24:proof5c}
    into
    \eqref{eq:lem:M2D24:proof5},
    we obtain
    \begin{align*}
        &\| \partial_{1}^{t_1}\partial_{2}^{t_2} (I - \tpprojectorsimpleI{2}) u \|^2_{L^2(\widehat{\Omega})}\\
        &\quad\leq C h_{\mathbf Z}^{2(s-t_1-t_2)} 
        \left(
            \|\partial_1^{t_1}\partial_2^{s-t_1} u\|_{L^2(\mathbf Z;\widehat\Omega)}^2
            +\|\partial_1^{s-t_2}\partial_2^{t_2} u\|_{L^2(\mathbf Z;\widehat\Omega)}^2
            +\|\partial_1^{s-r}\partial_2^{r} u\|_{L^2(\mathbf Z;\widehat\Omega)}^2
        \right)\\
        &\quad\leq C h_{\mathbf Z}^{2(s-t_1-t_2)} | u |_{\mathcal{H}^s(\mathbf Z;\widehat\Omega)}^2,
    \end{align*}
    which finishes the proof.
\end{proof}

The next lemma establishes that the trace and normal derivatives of $\widehat{\Pi}^{(r,r)}_{p,k,{\mathbf Z}} u$ coincides with the trace and normal derivatives of the correct directional projector of $u$.
\begin{lemma}\label{lem:2D_edge_cond_edge}
    Let $p$, $k$, $r$, and $s$ be integers with $0\le s < r\leq k+1\leq p$ and $p\ge 2r-1$. For all $j\in\{1,2,3,4\}$, we have
    \begin{equation}\label{eq:2D_edge_cond_edge:statement}
        (\mathbf{n}_j\cdot \nabla)^s \left(\widehat{\Pi}^{(r,r)}_{p,k,{\mathbf Z}} u\right)\big|_{\widehat\Sigma_j} 
        = \left(\widehat\Pi^{(r)}_{p,k,\mathbf Z,j} (\mathbf{n}_j\cdot \nabla)^s u \right)\big|_{\widehat\Sigma_j}
        \quad\mbox{for all} \quad u\in H^{r+2}(\widehat \Omega).
    \end{equation}
\end{lemma}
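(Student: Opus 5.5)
By symmetry it suffices to treat $j=1$, i.e.\ the edge $\widehat\Sigma_1=(0,1)\times\{0\}$ with $\mathbf n_1=(0,-1)$. There $(\mathbf n_1\cdot\nabla)^s=(-1)^s\partial_2^s$, and the correct directional projector is $\widehat\Pi^{(r)}_{p,k,\mathbf Z,1}$, which acts in $\xi_1$, i.e.\ tangentially to the edge. The case $j=3$ is identical, using Lemma~\ref{lem:1D_edge_cond}~(c) (interpolation at $1$) in place of~(b). The cases $j=2,4$ follow by exchanging the roles of $\xi_1$ and $\xi_2$ and using the commutativity~\eqref{eq:2Dinl}.

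I will write $\tpprojector u=\widehat\Pi^{(r)}_{p,k,\mathbf Z,1}\bigl(\widehat\Pi^{(r)}_{p,k,\mathbf Z,2}u\bigr)$ and proceed in two steps.

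\textbf{Step 1.} Apply $\partial_2^s$ and evaluate at $\xi_2=0$. Since $\widehat\Pi^{(r)}_{p,k,\mathbf Z,1}$ acts only in $\xi_1$ through the fixed linear operator $\Pi^{(r)}_{p,k,Z_1}$, it commutes with $\partial_2^s$ and with restriction to $\{\xi_2=0\}$. This gives
\begin{equation*}
\partial_2^s\bigl(\tpprojector u\bigr)(\cdot,0)=\Pi^{(r)}_{p,k,Z_1}\Bigl(\partial_2^s\bigl(\widehat\Pi^{(r)}_{p,k,\mathbf Z,2}u\bigr)(\cdot,0)\Bigr).
\end{equation*}
To justify the commutation, I use the representation of $\Pi^{(r)}_{p,k,Z_1}$ in a (B-spline) basis: its coefficients are continuous linear functionals, and applying $\partial_2^s$ to $\xi_2\mapsto\Pi^{(r)}_{p,k,Z_1}(u(\cdot,\xi_2))$ amounts to applying those functionals to $\partial_2^s u(\cdot,\xi_2)$. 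This is legitimate because $u\in H^{r+2}(\widehat\Omega)\subset C^r(\widehat\Omega)$ and $s<r$.

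\textbf{Step 2.} For each fixed $\xi_1$, the function $\widehat\Pi^{(r)}_{p,k,\mathbf Z,2}u(\xi_1,\cdot)$ equals $\Pi^{(r)}_{p,k,Z_2}(u(\xi_1,\cdot))$. By Lemma~\ref{lem:1D_edge_cond}~(b), its $\xi_2$-derivatives of order $s\le r-1$ at $0$ coincide with those of $u(\xi_1,\cdot)$; this requires $u(\xi_1,\cdot)\in H^r(0,1)$, which holds by the $C^r$ regularity. Hence $\partial_2^s(\widehat\Pi^{(r)}_{p,k,\mathbf Z,2}u)(\cdot,0)=\partial_2^s u(\cdot,0)$. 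Substituting this into Step 1 and multiplying by $(-1)^s$ yields~\eqref{eq:2D_edge_cond_edge:statement}. The hypothesis $p\ge 2r-1$ is needed only for the opposite edges $j=2,3$, which are handled through Lemma~\ref{lem:1D_edge_cond}~(c).

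\textbf{Main obstacle.} The only delicate point is the regularity bookkeeping in Step 1. One must ensure that $\partial_2^s u(\cdot,0)\in H^r(0,1)$, so that the right-hand side of~\eqref{eq:2D_edge_cond_edge:statement} is well defined, and that differentiation in $\xi_2$ may be interchanged with the one-dimensional projection in $\xi_1$. I would settle both by working with smooth $u$, for which the interchange is elementary, and then passing to the limit by density of smooth functions in $H^{r+2}(\widehat\Omega)$. This limit uses continuity of $\tpprojector$ from $H^{r+2}(\widehat\Omega)$ into the finite-dimensional space $S_{p,k,\mathbf Z}$, together with continuity of the trace of $\partial_2^s u$ into $H^r(0,1)$ for $s<r$, so that both sides of~\eqref{eq:2D_edge_cond_edge:statement} depend continuously on $u$.
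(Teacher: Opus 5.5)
Your proof is correct and is essentially the paper's argument. Both rest on the endpoint interpolation of $\Pi^{(r)}_{p,k,Z_2}$ from Lemma~\ref{lem:1D_edge_cond}~(b)/(c) together with the fact that $\partial_2$ and restriction to $\xi_2=0$ commute with the $\xi_1$-projector $\widehat\Pi^{(r)}_{p,k,\mathbf Z,1}$; the only difference is that the paper first uses~\eqref{eq:2Dinl} to make the $\xi_2$-projector the outer one, whereas you keep the defining order (and you correctly write $(-1)^s\partial_2^s$ where the paper has $-\partial_2^s$).

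One caveat on your regularity paragraph: for $u\in H^{r+2}(\widehat\Omega)$ the trace of $\partial_2^s u$ lies only in $H^{r+3/2-s}(0,1)$, and $\widehat{\Pi}^{(r,r)}_{p,k,\mathbf Z}$ needs mixed $H^r\otimes H^r$ regularity, which $H^{r+2}(\widehat\Omega)$ provides only for $r\le 2$. So your density argument works exactly for $r\le2$, which includes $r=2$, the only case the paper uses; for $r\ge3$ the fault lies in the lemma's hypothesis $u\in H^{r+2}(\widehat\Omega)$, which is too weak, rather than in your argument.
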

\begin{proof}
    Let $u\in H^{r+2}(\widehat \Omega)$ be arbitrary but fixed and $u_h:=\widehat{\Pi}^{(r,r)}_{p,k,{\mathbf Z}} u$.
    Without loss of generality, we only consider $\widehat{\Sigma}_1 = (0,1)\times\{0\}$. From the definition, we immediately obtain that $u_h(\xi_1,\xi_2) = (\Pi_{p,k,Z_2}^{(r)} w(\xi_1,\cdot))(\xi_2)$, where $w := \widehat\Pi^{(r)}_{p,k,\mathbf Z,1} u$.
    For the normal derivatives on $\widehat\Sigma_1$, we have $\mathbf{n}_1 \cdot \nabla  = -\partial_{2} $ and therefore
    \begin{align*}
        (\mathbf{n}_1 \cdot \nabla)^s u_h(\xi_1,0) 
        = - \left(\partial_2^s \Pi_{p,k,Z_2}^{(r)} w(\xi_1,\cdot)\right)(0)
        &= -\partial_2^s w(\xi_1,0) \\
        &= \left(\widehat\Pi^{(r)}_{p,k,\mathbf Z,1} \left(\mathbf{n}_1 \cdot \nabla \right)^s u\right)(\xi_1,0)
    \end{align*}
    since Lemma~\ref{lem:1D_edge_cond} (b) and (c) guarantees that the projector interpolates the function values and the derivatives at the endpoints
    and because the derivative $\partial_2$ commutes with the projector $\widehat\Pi^{(r)}_{p,k,\mathbf Z,1}$. This shows the second statement in~\eqref{eq:2D_edge_cond_edge:statement}.
    By symmetry, the same arguments apply for the other edges.
\end{proof}
In the next lemma we show that the two-dimensional tensor-product projector interpolates function values and derivatives at the corners of the parameter domain.
\begin{lemma}\label{lem:2D_edge_cond}
    Let $p$, $k$, $r$, $s$, and $t$ be integers with $0\le s,t < r\leq k+1\leq p$ and $p\ge 2r-1$. For all $\widehat{\mathbf{x}}_\ell \in \widehat{\mathcal X}$, we have
    \begin{equation*}
        \partial_{1}^s\partial_{2}^t u(\widehat{\mathbf{x}}_\ell) 
        = \partial_{1}^s\partial_{2}^t (\widehat{\Pi}^{(r,r)}_{p,k,{\mathbf Z}} u)(\widehat{\mathbf{x}}_\ell)
        \quad\mbox{for all} \quad u\in H^{r+2}(\widehat \Omega).
    \end{equation*}
\end{lemma}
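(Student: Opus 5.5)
We reduce the statement to the univariate endpoint interpolation in Lemma~\ref{lem:1D_edge_cond} (b) and (c), using the factorization~\eqref{eq:2Dinl} of $\widehat{\Pi}^{(r,r)}_{p,k,\mathbf Z}$ into the two directional projectors. By symmetry it suffices to treat the vertex $\widehat{\mathbf x}_1=(0,0)$. The vertices $(1,0)$, $(1,1)$ and $(0,1)$ are handled identically, using part (c), which is available because $p\ge 2r-1$. Fix $u\in H^{r+2}(\widehat\Omega)$. By the Sobolev embedding, $u\in C^r(\widehat\Omega)$, so all point values $\partial_1^s\partial_2^t u(\widehat{\mathbf x}_\ell)$ with $s,t<r$ are well defined.

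Set $w:=\widehat\Pi^{(r)}_{p,k,\mathbf Z,1}u$. Then $u_h:=\widehat{\Pi}^{(r,r)}_{p,k,\mathbf Z}u$ satisfies $u_h(\xi_1,\cdot)=\Pi^{(r)}_{p,k,Z_2}\bigl(w(\xi_1,\cdot)\bigr)$ for every $\xi_1\in[0,1]$. The argument has two steps.

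\textbf{Step 1.} Apply Lemma~\ref{lem:1D_edge_cond} (b) in the $\xi_2$ direction. For each fixed $\xi_1$ and each $t<r$ this gives $\partial_2^t u_h(\xi_1,0)=\partial_2^t w(\xi_1,0)$. Since this holds for all $\xi_1$, we may differentiate it $s$ times in $\xi_1$. This is legitimate because both sides are smooth in $\xi_1$: $u_h$ is a tensor-product spline, and $w$ is a spline in $\xi_1$ whose coefficients depend linearly and continuously on $u(\cdot,\xi_2)$. We obtain $\partial_1^s\partial_2^t u_h(0,0)=\partial_1^s\partial_2^t w(0,0)$.

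\textbf{Step 2.} Use that $\partial_2$ commutes with $\widehat\Pi^{(r)}_{p,k,\mathbf Z,1}$, as already used in the proof of Lemma~\ref{lem:2D_edge_cond_edge}. This gives $\partial_2^t w(\cdot,0)=\Pi^{(r)}_{p,k,Z_1}\bigl(\partial_2^t u(\cdot,0)\bigr)$. Applying Lemma~\ref{lem:1D_edge_cond} (b) once more, now in the $\xi_1$ direction at $\xi_1=0$, yields $\partial_1^s\partial_2^t w(0,0)=\partial_1^s\partial_2^t u(0,0)$. Combining the two steps proves the claim at $\widehat{\mathbf x}_1$.

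The main obstacle is justifying the regularity of the traces. We need $\partial_2^t u(\cdot,0)\in H^r(0,1)$ for $t<r$, so that the univariate projector may be applied to it. We also need that the commutation of $\partial_2$ with the directional projector holds pointwise up to the boundary. Both follow from $u\in H^{r+2}(\widehat\Omega)$ by the trace theorem, since $\partial_2^t u\in H^{r+2-t}(\widehat\Omega)\subset H^{2}(\widehat\Omega)$ and its trace on $\widehat\Sigma_1$ lies in $H^{r+3/2-t}$. When $t$ is close to $r$ this needs slight care. In that case, use instead that $\widehat\Pi^{(r)}_{p,k,\mathbf Z,1}$ acts as a bounded linear operator applied to each line $u(\cdot,\xi_2)$ and is therefore continuous in $\xi_2$. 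Alternatively, argue by density of smooth functions, using that all operators involved are continuous from $H^{r+2}(\widehat\Omega)$ into $C^{r-1}$ near the corner.
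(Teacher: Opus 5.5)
Your proof is correct and follows the same route as the paper. Both factor $\widehat{\Pi}^{(r,r)}_{p,k,\mathbf Z}$ into the two directional projectors, commute each derivative with the projector acting in the other direction, and apply the univariate endpoint interpolation of Lemma~\ref{lem:1D_edge_cond}~(b)/(c) once in each direction; the paper does this by invoking Lemma~\ref{lem:2D_edge_cond_edge} twice in the opposite order, while you inline that lemma's argument for one edge.

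Your regularity caveat only matters for $r\ge 3$, and there the issue is already in the statement itself, not just in your argument. Your claim that $u\in C^r$ makes all point values $\partial_1^s\partial_2^t u(\widehat{\mathbf x}_\ell)$ with $s,t<r$ well defined is too quick, because $s+t$ can exceed $r$. For $r=2$, the only case used later, your trace argument suffices.
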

\begin{proof}
    Without loss of generality, we only consider $\widehat{\mathbf x}_1 = (0,0)$.
    Using~\eqref{eq:tensor-product-projector} and
    since the differentials commute and since the differentiation in one direction commutes with the projector for the other spatial direction, we obtain by applying Lemma~\ref{lem:2D_edge_cond_edge} for $\widehat \Gamma_1$
    \begin{equation*}
       \partial_1^s \partial_2^t
       (\widehat{\Pi}^{(r,r)}_{p,k,{\mathbf Z}}  u)(0,0)
       =
       \left(
       \partial_1^s \widehat{\Pi}^{(r)}_{p,k,\mathbf Z,1} 
       \left(
       \partial_2^t \widehat{\Pi}^{(r)}_{p,k,\mathbf Z,2}  u
       \right)\right)(0,0)
       =
       \partial_1^s \left(
       \partial_2^t \widehat{\Pi}^{(r)}_{p,k,\mathbf Z,2}  u
       \right)(0,0) 
    \end{equation*}
    and by applying that Lemma for $\widehat \Gamma_4$ further
    \begin{equation*}
       \partial_1^s \partial_2^t
       (\widehat{\Pi}^{(r,r)}_{p,k,{\mathbf Z}}  u)(0,0)
       =
       \left(
       \partial_2^t \widehat{\Pi}^{(r)}_{p,k,\mathbf Z,2} \partial_1^s u
       \right)(0,0)
       =
       \partial_1^s
       \partial_2^t  u(0,0),
    \end{equation*}
    which finishes the proof.
    The result for the other sides follows by symmetry.
\end{proof}

By extending the one-dimensional result from Lemma \ref{lem:1d trace}, the following lemma provides a scaled trace inequality along the edges of the two-dimensional parameter domain.

\begin{lemma}\label{lem:2d trace}
    For all integers $t \ge 0$ and all $j\in  \{1,2,3,4\}$, we have
    \begin{equation*}
    \begin{aligned}
        |u|_{H^t(\widehat \Sigma_j)}
        &\le
        \sqrt{
        2\eta^{-1}
        \|(\mathbf t_j \cdot \nabla)^t u\|_{L^2(\widehat \Omega)}^2
        +
        \eta
        \|(\mathbf t_j \cdot \nabla)^t (\mathbf n_j \cdot \nabla) u\|_{L^2(\widehat \Omega)}^2
        } \\
        &\le
        \sqrt{2}\eta^{-1/2}
        \|(\mathbf t_j \cdot \nabla)^t u\|_{L^2(\widehat \Omega)}
        +
        \eta^{1/2}
        \|(\mathbf t_j \cdot \nabla)^t (\mathbf n_j \cdot \nabla) u\|_{L^2(\widehat \Omega)}
    \end{aligned}
    \end{equation*}
    for all $u\in \mathcal{H}^{t+1,\max\{1,t\}}(\mathbf Z;\widehat\Omega)$ and all $\eta \in (0,1]$.
\end{lemma}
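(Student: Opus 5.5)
The plan is to reduce the statement to the one-dimensional trace inequality of Lemma~\ref{lem:1d trace}, applied along lines orthogonal to the edge, and then integrate over the tangential variable. By symmetry it suffices to treat $j=1$, i.e.\ $\widehat\Sigma_1=(0,1)\times\{0\}$, where $\mathbf t_1\cdot\nabla=\partial_1$ and $\mathbf n_1\cdot\nabla=-\partial_2$, so the sign of the normal derivative is irrelevant inside the norm. The other edges follow by the obvious reflections and rotations of $\widehat\Omega$, which map $\widehat\Sigma_j$ to $\widehat\Sigma_1$ and transform $\mathbf t_j\cdot\nabla$ and $\mathbf n_j\cdot\nabla$ into $\pm\partial_1$ and $\pm\partial_2$.

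For $j=1$, set $w:=\partial_1^t u$. The assumption $u\in\mathcal{H}^{t+1,\max\{1,t\}}(\mathbf Z;\widehat\Omega)$ guarantees that $\partial_1^t u$ and $\partial_1^t\partial_2 u$ lie in $L^2(\widehat\Omega)$ as global functions; this holds since $t\le\max\{1,t\}$, $1\le\max\{1,t\}$ and $t+1\le t+1$. Hence $w\in L^2$ and $\partial_2 w\in L^2(\widehat\Omega)$. By Fubini, for almost every $\xi_1\in(0,1)$ the function $w(\xi_1,\cdot)$ lies in $H^1(0,1)$. Lemma~\ref{lem:1d trace} then gives
\[
|w(\xi_1,0)|^2\le 2\eta^{-1}\|w(\xi_1,\cdot)\|_{L^2(0,1)}^2+\eta\,|w(\xi_1,\cdot)|_{H^1(0,1)}^2 .
\]
Integrating over $\xi_1\in(0,1)$ yields
\[
\|w(\cdot,0)\|_{L^2(0,1)}^2\le 2\eta^{-1}\|\partial_1^t u\|_{L^2(\widehat\Omega)}^2+\eta\|\partial_1^t\partial_2 u\|_{L^2(\widehat\Omega)}^2 .
\]
The second inequality in the statement is simply $\sqrt{a+b}\le\sqrt a+\sqrt b$.

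The remaining point, and the main obstacle, is to identify $\|w(\cdot,0)\|_{L^2(0,1)}$ with $|u|_{H^t(\widehat\Sigma_1)}$, i.e.\ to show $\partial^t(u|_{\widehat\Sigma_1})=(\partial_1^t u)|_{\widehat\Sigma_1}$ in the trace sense. Here $u$ is only piecewise $H^{t+1}$, with global integrability only of the mixed derivatives listed above. My approach is as follows. Since $\partial_1^m u$ and $\partial_1^m\partial_2 u$ are globally in $L^2$ for $0\le m\le t$, each $\partial_1^m u$ has a well-defined trace on $\widehat\Sigma_1$ by the argument above. For a test function $\varphi\in C_c^\infty(0,1)$, I would write
\[
\int_0^1 (\partial_1^{m}u)(\xi_1,0)\,\varphi'(\xi_1)\,\mathrm d\xi_1
\]
as a limit of averages over strips $(0,1)\times(0,\varepsilon)$. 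By the fundamental theorem in $\xi_2$, these averages converge, and integration by parts in $\xi_1$ can be done on almost every horizontal line, where $\partial_1^{m}u(\cdot,\xi_2)$ is absolutely continuous because $\partial_1^{m+1}u\in L^2$. This shows that the weak derivative of the trace of $\partial_1^m u$ is the trace of $\partial_1^{m+1}u$, and induction over $m$ gives the identity. Alternatively, I would argue by density of smooth functions in the anisotropic space, where the identity is trivial and both sides are continuous with respect to the norms appearing on the right-hand side.
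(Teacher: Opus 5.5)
Your proposal is correct and matches the paper's proof, which simply applies Lemma~\ref{lem:1d trace} to $\partial_1^t u(\xi_1,\cdot)$, integrates over $\xi_1$, and handles the other edges by symmetry. Your additional argument identifying $\partial^t(u|_{\widehat\Sigma_1})$ with $(\partial_1^t u)|_{\widehat\Sigma_1}$, using only the globally integrable derivatives $\partial_1^m u$ and $\partial_1^m\partial_2 u$ for $m\le t$, fills in a step the paper leaves implicit (one cosmetic slip: $\mathbf t_1=(-1,0)$, so $\mathbf t_1\cdot\nabla=-\partial_1$, which is equally harmless inside the norms).
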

\begin{proof}
    For $\widehat \Sigma_1$, the result follows immediately by applying Lemma~\ref{lem:1d trace} to $\partial_1^{t} u(\xi_1,\cdot)$.
    The result for the other sides follows by symmetry.
\end{proof}

\subsection{Estimates for patch-wise AS-$G^1$ splines}\label{subsec:patchwise-estim}

We restrict ourselves to one patch $\Omega^{(i)}$ and consider the pull-back with respect to its parameterization. Thus, to simplify the notation in the following, we drop the upper index $(i)$ from all quantities that depend on the patch, i.e., the gluing data and directions. From now on, we assume that $p$ and $k$ are integers with $3 \leq k+2 \leq p$ and that $\mathbf Z=(Z_1,Z_2)$ is a partition with $h_{\mathbf Z}\le 1/(p+1)$.
As quasi-interpolation operator, we choose $\widehat \Pi^{(2,2)}_{p,k,\mathbf Z}$, i.e., the projector defined in \eqref{eq:tensor-product-projector} with $r=2$.
That projector
does not satisfy the conditions from Lemma~\ref{lem:continuity_new_notaion} for traces and normal derivatives across interfaces. Therefore, we define a modified projector $\tpprojectorS$ that satisfies these conditions. 
Let $u\in H^4(\widehat\Omega)$ be the function to be approximated. To further simplify the notation, we use the abbreviations
\begin{equation*}
    \tpprojectorsimple  := \widehat{\Pi}^{(2,2)}_{p,k,\mathbf Z} 
    \qquad\mbox{and}\qquad
    \tpprojectorsimpleI{j}  := \widehat{\Pi}^{(2)}_{p,k,\mathbf Z,j} . 
\end{equation*}
Specifically, for $\tpprojectorS$ we modify the function values and directional derivatives along the boundary, compared to $\tpprojectorsimple$. Since those changes have to be extended to the domain $\widehat \Omega$, we first introduce extension operators. For each $j\in  \{1,2,3,4\}$ and each $\sigma\in\{0,1\}$, we define an extension operator $\edgeextensionS{j}{\sigma}$ via
\begin{equation}\nonumber
\begin{aligned}
         &(\edgeextensionS{1}{\sigma} w)(\xi_1,\xi_2) := w(\xi_1,0) \phi_{p,Z_2}^{(\sigma)}(\xi_2), 
         &&(\edgeextensionS{2}{\sigma} w)(\xi_1,\xi_2) := w(1,\xi_2) \phi_{p,\overline Z_1}^{(\sigma)}(1-\xi_1),
         \\
         &(\edgeextensionS{3}{\sigma} w)(\xi_1,\xi_2) := w(\xi_1,1) \phi_{p,\overline Z_2}^{(\sigma)}(1-\xi_2),
         &&(\edgeextensionS{4}{\sigma} w)(\xi_1,\xi_2) := w(0,\xi_2) \phi_{p,Z_1}^{(\sigma)}(\xi_1),
\end{aligned}
\end{equation}
where the functions $\phi_{p,Z_j}^{(\sigma)}$ are as in Lemma~\ref{lem:estimPhi}
and $\overline Z_j$ denotes the reversed partition.

We define the patch-local AS-$G^1$ projector $\tpprojectorS$ by
\begin{equation}\label{eq:ModifProj}
      \tpprojectorS u  :=
           \tpprojectorsimple u
           + \sum_{\sigma=0}^1 \sum_{j=1}^4
           \edgeextension{j}{\sigma}{\edgeprojectorS{j}{\sigma} u-(\mathbf{n}_j\cdot\nabla)^\sigma\tpprojectorsimple u},
\end{equation}
where
\begin{align}
    \label{eq:edge_corrections1}
    \edgeprojectorS{j}{0}u
    &:= \widehat\Pi^{(*)}_{p,k+1,\mathbf Z, j} u,
    \\
    \label{eq:edge_corrections2}
    \edgeprojectorS{j}{1}u
    &:= \alpha_j \,
        \widehat\Pi^{(2)}_{p-1,k,\mathbf Z, j}
            \left( \mathbf{d}_j \cdot \nabla u \right)
        - \beta_j \left(\mathbf{t}_j \cdot \nabla \edgeprojectorS{j}{0} u \right),
\end{align}
where $\widehat\Pi^{(2)}_{p-1,k,\mathbf Z, j}$ and $\widehat\Pi^{(*)}_{p,k+1,\mathbf Z, j}$ are defined in~\eqref{eq:one-directional-projectors} and analogously to that equation, respectively.
Here $\alpha_j$ and $\beta_j$ are the gluing data, $\mathbf{n}_j$, $\mathbf{t}_j$ and $\mathbf{d}_j$ the normal, tangential and crossing directions, respectively, associated to $\widehat\Sigma_j$ as defined in Lemma~\ref{lem:continuity_new_notaion}. The gluing data and crossing direction depend on the respective patch. Since, in this subsection, we consider one fixed patch, we omit the patch index in the notation.

\begin{theorem}
   For all $u\in H^4(\widehat\Omega)$, we have
   \begin{equation*}
        \tpprojectorS u \in S_{p,k,\mathbf Z} .
   \end{equation*}
\end{theorem}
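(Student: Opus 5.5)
The plan is to check that every summand in the definition~\eqref{eq:ModifProj} lies in $S_{p,k,\mathbf Z}$. Since $S_{p,k,\mathbf Z}$ is a linear space, this gives the claim. The first summand $\tpprojectorsimple u$ is in $S_{p,k,\mathbf Z}$ by~\eqref{eq:2Dinl} with $r=2$. This applies because $u\in H^4(\widehat\Omega)=H^{r+2}(\widehat\Omega)$ and $2\le k+1\le p$. Each correction term has the form $\edgeextension{j}{\sigma}{w}$. Without loss of generality take $j=1$, so $\edgeextension{1}{\sigma}{w}(\xi_1,\xi_2)=w(\xi_1,0)\,\phi^{(\sigma)}_{p,Z_2}(\xi_2)$. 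The other edges are identical up to swapping coordinates and the reflection $\xi\mapsto 1-\xi$. That reflection maps $S_{p,p-1,\overline Z}$ onto $S_{p,p-1,Z}$.

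By Lemma~\ref{lem:estimPhi}, $\phi^{(\sigma)}_{p,Z_2}\in S_{p,p-1,Z_2}\subseteq S_{p,k,Z_2}$, since $k\le p-1$. A product $f(\xi_1)g(\xi_2)$ with $f\in S_{p,k,Z_1}$ and $g\in S_{p,k,Z_2}$ lies in the tensor-product space $S_{p,k,\mathbf Z}$. It therefore suffices to show that the edge function $w(\cdot,0)$ lies in $S_{p,k,Z_1}$ for each $w=\edgeprojectorS{1}{\sigma} u-(\mathbf n_1\cdot\nabla)^\sigma\tpprojectorsimple u$.

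For $\sigma=0$ there are two pieces.
\begin{itemize}
\item The trace $\tpprojectorsimple u(\cdot,0)$ is the restriction of a tensor-product spline, so it lies in $S_{p,k,Z_1}$.
\item $\edgeprojectorS{1}{0}u(\cdot,0)=\Pi^{(*)}_{p,k+1,Z_1}(u(\cdot,0))$ lies in $S_{p,k+1,Z_1}\subseteq S_{p,k,Z_1}$. This uses Corollary~\ref{coro:star}, whose hypotheses $3\le (k+1)+1\le p$ hold because $k\ge1$ and $k+2\le p$. It also needs $u(\cdot,\xi_2)\in H^3(0,1)$, which holds for $u\in H^4(\widehat\Omega)$ after restriction to the line $\xi_2=0$ (trace regularity $H^{7/2}$ on the edge).
\end{itemize}

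For $\sigma=1$ there are three pieces.
\begin{itemize}
\item $(\mathbf n_1\cdot\nabla)\tpprojectorsimple u(\cdot,0)=-\partial_2\tpprojectorsimple u(\cdot,0)$ is again in $S_{p,k,Z_1}$. Differentiating a tensor-product spline in $\xi_2$ keeps the $\xi_1$-factor in $S_{p,k,Z_1}$.
\item $\Pi^{(2)}_{p-1,k,Z_1}\bigl((\mathbf d_1\cdot\nabla u)(\cdot,0)\bigr)\in S_{p-1,k,Z_1}$ by Lemma~\ref{lem:1D_edge_cond}, which applies since $2\le k+1\le p-1$. The argument $(\mathbf d_1\cdot\nabla u)(\cdot,0)$ lies in $H^2(0,1)$ because $\nabla u\in H^3$ and $\alpha_1,\beta_1$ are linear. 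Multiplying by the linear function $\alpha_1$ raises the degree by at most one and keeps the $C^k$-smoothness, so $\alpha_1\Pi^{(2)}_{p-1,k,Z_1}(\cdots)\in S_{p,k,Z_1}$.
\item $\mathbf t_1\cdot\nabla\edgeprojectorS{1}{0}u$ restricted to the edge is $\partial_1$ of a function in $S_{p,k+1,Z_1}$, so it is in $S_{p-1,k,Z_1}$. Multiplication by the linear $\beta_1$ again gives an element of $S_{p,k,Z_1}$.
\end{itemize}
One must read $\edgeprojectorS{1}{0}u$ as the function $\widehat\Pi^{(*)}_{p,k+1,\mathbf Z,1}u$ on $\widehat\Omega$; only its tangential derivative on the edge matters.

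Collecting all terms finishes the proof. The only step needing care is the degree and smoothness bookkeeping in the $\sigma=1$ term: multiplying by the linear gluing data must land in $S_{p,k}$ rather than $S_{p+1,k}$. This is exactly why the directional derivative is projected into the degree-$(p-1)$ space and the trace into $S_{p,k+1}$. A secondary point is the regularity of the restrictions of $u$ to the edges, needed for the univariate projectors to be defined; it follows from standard trace theorems for $u\in H^4(\widehat\Omega)$.
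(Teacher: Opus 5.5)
Your proof is correct and follows the paper's argument: you write each correction term as an edge factor times $\phi^{(\sigma)}_{p,Z_2}\in S_{p,k,Z_2}$, then check that multiplying the $S_{p-1,k,Z_1}$ pieces by the linear gluing data lands in $S_{p,k,Z_1}$. The only minor deviation is that you place $\mathcal{Q}u(\cdot,0)$ and $\partial_2\mathcal{Q}u(\cdot,0)$ in $S_{p,k,Z_1}$ directly as traces of a tensor-product spline, whereas the paper identifies $\mathcal{Q}u(\cdot,0)$ with $\Pi^{(2)}_{p,k,Z_1}u(\cdot,0)$ via Lemma~\ref{lem:2D_edge_cond_edge}; that identification is not needed for membership.
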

\begin{proof}
    Since \eqref{eq:2Dinl} already states
    $
        \tpprojectorsimple u \in S_{p,k,\mathbf Z},
    $
    it remains to show that the correction terms added in~\eqref{eq:ModifProj} also belong to \(S_{p,k,{\mathbf Z}}\).
    Consider the term $F^{(0)}_1:=\edgeextension{1}{0}{\edgeprojectorS{1}{0} u-\tpprojectorsimple u}$.
    Using the definition of the extension operator, \eqref{eq:edge_corrections1}, \eqref{eq:one-directional-projectors}, and Lemma~\ref{lem:2D_edge_cond_edge}, we have
    \begin{align}\nonumber
        F^{(0)}_1(\xi_1,\xi_2) 
        &= \left(\edgeextension{1}{0}{\edgeprojectorS{1}{0} u-\tpprojectorsimple u}\right)(\xi_1,\xi_2)
        =
        \left(\edgeprojectorS{1}{0} u-\tpprojectorsimple u\right)(\xi_1,0) \phi_{p,Z_2}^{(0)}(\xi_2) \\
        &=
        \left(\Pi^{(*)}_{p,k+1,Z_1} u(\cdot,0)
        -\Pi^{(2)}_{p,k,Z_1} u(\cdot,0)\right)(\xi_1) \phi_{p,Z_2}^{(0)}(\xi_2).
    \label{eq:proof:lem:M2Dincl:1}
    \end{align}
    The definitions of the projectors guarantee
    $\Pi^{(*)}_{p,k+1,Z_1} u(\cdot,0)
        -\Pi^{(2)}_{p,k,Z_1} u(\cdot,0)\in S_{p,k,Z_1}$.
    Since Lemma~\ref{lem:estimPhi} implies $\phi_{p,Z_2}^{(0)}\in S_{p,k,Z_2}$, we obtain $F^{(0)}_1 \in S_{p,k,\mathbf Z}$.
    The other three sides are handled analogously.
  
    Next, consider the term $F_1^{(1)}:=\edgeextension{1}{1}{\edgeprojectorS{1}{1}u-\mathbf{n}_1\cdot\nabla\tpprojectorsimple u}$.
    Analogously to \eqref{eq:proof:lem:M2Dincl:1} and using $\mathbf{n}_1\cdot\nabla = -\partial_2$, we have
    \begin{equation*}
    \begin{aligned}
        F^{(1)}_1(\xi_1,\xi_2)
        &= \left(\edgeextension{1}{1}{\edgeprojectorS{1}{1}u+\partial_2\tpprojectorsimple u}\right)(\xi_1,\xi_2) 
        = \left(\edgeprojectorS{1}{1}u +\partial_2\tpprojectorsimple u\right)(\xi_1,0) \, \phi_{p,Z_2}^{(1)}(\xi_2).
    \end{aligned}
    \end{equation*}
    Recall that, using $\mathbf{t}_1\cdot\nabla = -\partial_1$, we have $\edgeprojectorS{1}{1}u = \alpha_1 \Pi^{(2)}_{p-1,k,\mathbf Z, 1} \left(\mathbf{d}_1\cdot\nabla u\right) + \beta_1 \partial_1 \edgeprojectorS{1}{0} u$
    and observe that $(\Pi^{(2)}_{p-1,k,\mathbf Z, 1} w)(\cdot,0)\in S_{p-1,k,Z_1}$. Further, $(\edgeprojectorS{1}{0}u)(\cdot,0)\in S_{p,k+1,Z_1}$, so we have for the derivative $(\partial_1 \edgeprojectorS{1}{0}u)(\cdot,0)\in S_{p-1,k,Z_1}$. Since $\alpha_1$ and $\beta_1$ are linear functions, taking the product with them yields a function in $S_{p,k,Z_1}$.
    Consequently, $\left(\edgeprojectorS{1}{1}u + \partial_2\tpprojectorsimple u\right)(\cdot,0)\in S_{p,k,Z_1}$. 
    From Lemma~\ref{lem:estimPhi}, we have $\phi_{p,Z_2}^{(1)}\in S_{p,k,Z_2}$. 
    Therefore,
    $
        F^{(1)}_1 \in  S_{p,k,\mathbf Z}.
    $
    The same holds by symmetry for the remaining sides. This completes the proof.
\end{proof}
The next lemma provides an estimate for bounding the Sobolev semi-norm of a product of functions based on the Leibniz rule for the dervative.
\begin{lemma}\label{lem:product rule} 
    Let $s\ge 0$ an integer. The estimate 
    \begin{equation*}
            |vw|_{\mathcal{H}^s(Z;(0,1))} \le 
            C(s) 
            \|v\|_{W^{s,\infty}(0,1)}
            \|w\|_{\mathcal{H}^{s}(Z;(0,1))}
    \end{equation*}
    holds
    for all $v\in W^{s,\infty}(0,1)$ and $w \in \mathcal{H}^{s}(Z;(0,1))$,
    where we use the standard norm
    $\|v\|_{W^{s,\infty}(0,1)}=\max_{m\in\{0,\dots,s\}} \|\partial^m v\|_{L^\infty(0,1)}$.
\end{lemma}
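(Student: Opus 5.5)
The plan is to work element by element and apply the Leibniz rule on each element. Fix $E\in\elemSet_Z$. Since $v\in W^{s,\infty}(0,1)$ and $w|_E\in H^s(E)$, the product $vw$ lies in $H^s(E)$. Its weak derivative of order $s$ is given by the Leibniz formula
\begin{equation*}
    \partial^s(vw) = \sum_{m=0}^{s} \binom{s}{m} \,\partial^{m} v\, \partial^{s-m} w \quad\mbox{on } E.
\end{equation*}
This formula is standard for products of a $W^{s,\infty}$ function and an $H^s$ function. If needed, I would justify it by approximating $w|_E$ with smooth functions in $H^s(E)$ and passing to the limit, which is allowed because multiplication by $\partial^m v\in L^\infty$ is continuous on $L^2$.

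Next I estimate each term in $L^2(E)$ by pulling out the $L^\infty$ norm of the $v$ factor:
\begin{equation*}
    \|\partial^m v\,\partial^{s-m}w\|_{L^2(E)}\le \|\partial^m v\|_{L^\infty(0,1)}\,\|\partial^{s-m}w\|_{L^2(E)}\le \|v\|_{W^{s,\infty}(0,1)}\,|w|_{H^{s-m}(E)}.
\end{equation*}
Applying the triangle inequality and then Cauchy--Schwarz to the finite sum gives
\begin{equation*}
|vw|_{H^s(E)}^2\le \Bigl(\sum_{m=0}^s\binom{s}{m}^2\Bigr)\|v\|_{W^{s,\infty}(0,1)}^2\sum_{m=0}^s |w|_{H^{s-m}(E)}^2 .
\end{equation*}
The right-hand side is $C(s)\,\|v\|_{W^{s,\infty}(0,1)}^2\,\|w\|_{H^s(E)}^2$, with $C(s)=\binom{2s}{s}$.

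Finally I sum over all $E\in\elemSet_Z$. The left-hand side becomes $|vw|^2_{\mathcal H^s(Z;(0,1))}$. On the right, $\sum_E\|w\|^2_{H^s(E)}$ is the broken full norm $\|w\|^2_{\mathcal H^s(Z;(0,1))}$. The paper only defines the broken seminorm explicitly, so I interpret the norm as the $\ell_2$ sum of the element-wise $H^s$ norms. Taking square roots gives the claim.

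The only delicate point is the justification of the Leibniz rule for weak derivatives on each element. Everything else is the triangle inequality and Cauchy--Schwarz, so I do not expect any real obstacle.
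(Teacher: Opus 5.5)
Your proposal is correct and follows essentially the same route as the paper: you apply the Leibniz rule on each element, use H\"older's inequality to pull out $\|\partial^m v\|_{L^\infty}$, and sum over all elements. The only difference is cosmetic. You use Cauchy--Schwarz and get the constant $\binom{2s}{s}$ in the squared inequality, so $\binom{2s}{s}^{1/2}$ in the stated one. The paper instead bounds each $\|\partial^{s-m}w\|_{L^2(E)}$ by $\|w\|_{H^s(E)}$ and absorbs $\sum_m\binom{s}{m}=2^s$ into the constant.
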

\begin{proof}
    Using the Leibniz rule, the following identity holds:
    \begin{equation*}
    \partial^s (vw) = \sum_{m=0}^s \binom{s}{m} (\partial^m v)(\partial^{s-m} w).
    \end{equation*}
    Taking the \(L^2\)-norm on any element $E\in\elemSet_Z$ and applying the triangle inequality and Hölder's inequality yields
    \begin{equation*}
            |vw|_{H^s(E)} 
            \le 
            \sum_{m=0}^s 
            \binom{s}{m} 
            \|\partial^m v\|_{L^\infty(E)}
            \|\partial^{s-m}w\|_{L^2(E)}
            \le
            C(s)
            \|v\|_{W^{s,\infty}(E)}
            \|w\|_{H^s(E)}.
    \end{equation*}
    Summing over all element norms finishes the proof since $\|v\|_{W^{s,\infty}(E)} \leq \|v\|_{W^{s,\infty}(0,1)}$ and the sum of the binomial coefficients equals $2^s$ and can be absorbed in the constant.
\end{proof}

The following lemma bounds the approximation error associated with the zero-th order boundary correction terms of the modified projector.

\begin{lemma}\label{lem:corrector estimate value}
    Let $t$ and $s$ be integers with $0\leq t\leq 2$ and $4\leq s\leq p+1$.
    For all $j\in  \{1,2,3,4\}$ and all $u\in \mathcal{H}^{s,k+1}(\mathbf Z;\widehat\Omega)$ with $u|_{\widehat\Sigma_j} \in H^{\min\{s,k+2\}}(\widehat\Sigma_j)$, we have 
    \begin{equation*}
        \left|\edgeextension{j}{0}{\edgeprojectorS{j}{0} u-\tpprojectorsimple u}\right|_{H^t(\widehat \Omega)} 
        \le C h_{\mathbf Z}^{s-t} |u|_{\mathcal H^s(\mathbf{Z};\widehat\Omega)}.
    \end{equation*}
\end{lemma}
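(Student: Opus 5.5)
Without loss of generality we treat $j=1$; the other edges follow by symmetry. By the definition of $\edgeextensionS{1}{0}$, by \eqref{eq:edge_corrections1} and by Lemma~\ref{lem:2D_edge_cond_edge} with $s=0$, exactly as in \eqref{eq:proof:lem:M2Dincl:1}, the correction term factorizes as $g(\xi_1)\,\phi^{(0)}_{p,Z_2}(\xi_2)$. Here $g := \Pi^{(*)}_{p,k+1,Z_1}u_0 - \Pi^{(2)}_{p,k,Z_1}u_0$ and $u_0:=u(\cdot,0)$. For a tensor product, every derivative $\partial_1^{t_1}\partial_2^{t_2}$ with $t_1+t_2=t$ factorizes, so
\begin{equation*}
  \bigl|\edgeextension{1}{0}{\edgeprojectorS{1}{0} u-\tpprojectorsimple u}\bigr|_{H^t(\widehat\Omega)}
  \le C\sum_{t_1+t_2=t} |g|_{H^{t_1}(0,1)}\,|\phi^{(0)}_{p,Z_2}|_{H^{t_2}(0,1)} .
\end{equation*}
Lemma~\ref{lem:estimPhi} with $s=0$ gives $|\phi^{(0)}_{p,Z_2}|_{H^{t_2}}\le C h_{\mathbf Z}^{1/2-t_2}$. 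It therefore suffices to show, for $0\le t_1\le 2$,
\begin{equation*}
  |g|_{H^{t_1}(0,1)} \le C h_{\mathbf Z}^{s-1/2-t_1}\,|u|_{\mathcal H^s(\mathbf Z;\widehat\Omega)} .
\end{equation*}

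To get this bound, write $g=(u_0-\Pi^{(2)}_{p,k,Z_1}u_0)-(u_0-\Pi^{(*)}_{p,k+1,Z_1}u_0)$ and treat each part as the trace of a two-dimensional error function. For the first part set $e:=(I-\tpprojectorsimpleI{1})u$, so that $e(\cdot,0)=u_0-\Pi^{(2)}_{p,k,Z_1}u_0$. Lemma~\ref{lem:2d trace} with $\eta=h_{\mathbf Z}$ yields
\begin{equation*}
  |e(\cdot,0)|_{H^{t_1}} \le \sqrt2\, h_{\mathbf Z}^{-1/2}\|\partial_1^{t_1}e\|_{L^2(\widehat\Omega)} + h_{\mathbf Z}^{1/2}\|\partial_1^{t_1}\partial_2 e\|_{L^2(\widehat\Omega)} .
\end{equation*}
Since $\partial_2$ commutes with $\tpprojectorsimpleI{1}$, estimate \eqref{eq:lem:M2D24:proof0} applies to both terms, with $(t',s',m)=(t_1,s,0)$ and $(t',s',m)=(t_1,s-1,1)$ respectively. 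This gives the bound $h_{\mathbf Z}^{s-1/2-t_1}\bigl(\|\partial_1^s u\|+\|\partial_1^{s-1}\partial_2 u\|\bigr)$, with broken norms, which is controlled by $|u|_{\mathcal H^s(\mathbf Z;\widehat\Omega)}$. The constraints $t'\le r=2\le s'\le p+1$ hold because $s\ge 4$. The second part is handled in the same way, with $\widehat\Pi^{(*)}_{p,k+1,\mathbf Z,1}$ in place of $\tpprojectorsimpleI{1}$ and Corollary~\ref{coro:star}(b) (used with smoothness $k+1$, valid since $3\le k+2\le p$) in place of Lemma~\ref{lem:1D_edge_cond}(d). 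Here $s'=s-1\ge3$ is needed, which holds. The constants are then $p$-independent apart from the finitely many low degrees absorbed in Corollary~\ref{coro:star}.

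I expect the main obstacle to be the regularity needed by $\Pi^{(*)}_{p,k+1,Z_1}$. It requires $u(\cdot,\xi_2)\in H^{\min\{s,k+2\}}$, one order more global smoothness than the $k+1$ provided by $\mathcal H^{s,k+1}(\mathbf Z;\widehat\Omega)$. The stated hypothesis guarantees this only on the edge $\xi_2=0$, not in the interior needed by the lifting argument above. My first attempt would be the correction trick from the proof of Lemma~\ref{lem:1D_edge_cond}: subtract a piecewise polynomial $G\in S_{s-1,k,Z_1}\otimes \mathbb P^{1}$ that removes the jumps of $\partial_1^{k+1}u$ across the knot lines. The remainder then has the required smoothness, and its $s$-th broken seminorms, the only ones entering the estimate, are unchanged. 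The difficulty is that $\Pi^{(*)}_{p,k+1,Z_1}$ does not reproduce $G$, because $G\notin S_{p,k+1,Z_1}$. If this fails, the fallback is to work purely in one dimension: bound $|u_0-\Pi^{(*)}u_0|_{H^{t_1}}\le Ch_{\mathbf Z}^{s-1-t_1}|u_0|_{\mathcal H^{s-1}(Z_1)}$ using the edge regularity, and then bound $|u_0|_{\mathcal H^{s-1}}$ through Lemma~\ref{lem:2d trace}. The extra factor $h_{\mathbf Z}^{-1/2}\|\partial_1^{s-1}u\|$ this produces must then be absorbed by first subtracting $\tpprojectorsimple u$ or a low-order polynomial, which I would try next.
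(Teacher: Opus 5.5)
Your argument is the paper's proof. You use the same factorization $g(\xi_1)\,\phi^{(0)}_{p,Z_2}(\xi_2)$, the bounds of Lemma~\ref{lem:estimPhi}, Lemma~\ref{lem:2d trace} with $\eta=h_{\mathbf Z}$, and line-wise use of Lemma~\ref{lem:1D_edge_cond}~(d) and Corollary~\ref{coro:star}~(b) integrated over $\xi_2$. For the $\Pi^{(2)}_{p,k,Z_1}$ part the paper applies Lemma~\ref{lem:M2D24} to $(I-\tpprojectorsimple)u$ rather than your $(I-\tpprojectorsimpleI{1})u$, but by Lemma~\ref{lem:2D_edge_cond_edge} these have the same trace on $\widehat\Sigma_1$.

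\textbf{The gap.} The obstacle you flag is real, the paper's proof has it too, and your proposal leaves it open. In \eqref{eq:corrector estimate value:proof-2}, Corollary~\ref{coro:star} is applied to $u(\cdot,\xi_2)$ and $\partial_2u(\cdot,\xi_2)$ for all $\xi_2$. This tacitly assumes $u(\cdot,\xi_2)\in H^{\min\{s,k+2\}}(0,1)$ away from the edge, which does not follow from the hypotheses once $s\ge k+2$. The intermediate bound can then fail. Take an interior knot $z$ of $Z_1$, $s\ge k+3$, and $u=\xi_2(\xi_1-z)_+^{k+1}$. This $u$ lies in $\mathcal H^{s,k+1}(\mathbf Z;\widehat\Omega)$, vanishes on $\widehat\Sigma_1$ and has $|u|_{\mathcal H^s(\mathbf Z;\widehat\Omega)}=0$. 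Yet $\widehat\Pi^{(*)}_{p,k+1,\mathbf Z,1}u\neq u$, and for $k=1$ it is not even defined, since $\Pi^{(*)}$ needs $H^3$.

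Neither of your two repairs closes this. The $G$-correction fails in general because the jumps of $\partial_1^{k+1}u$ across knot lines are arbitrary functions of $\xi_2$, not controlled by $|u|_{\mathcal H^s(\mathbf Z;\widehat\Omega)}$. Subtracting $\tpprojectorsimple u$ fails because its trace lies only in $S_{p,k,Z_1}$. As written, your proof (and the paper's) is complete only for $s\le k+1$, where $\mathcal H^{s,k+1}(\mathbf Z;\widehat\Omega)=H^s(\widehat\Omega)$, or under a stronger hypothesis such as $u\in\mathcal H^{s,k+2}(\mathbf Z;\widehat\Omega)$.

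\textbf{A fix for $s\ge k+3$.} Your one-dimensional fallback works if the subtracted function is a suitable spline.
\begin{itemize}
\item Let $q_0\in S_{s-1,s-2,Z_1}\subseteq S_{p,k+1,Z_1}$ be an $(s-1)$-fold antiderivative of the piecewise constant function whose value on each $E_1\in\elemSet_{Z_1}$ is the mean of $\partial_1^{s-1}u$ over $R_{E_1}:=E_1\times(0,h_{\mathbf Z})$.
\item Then $\Pi^{(*)}_{p,k+1,Z_1}q_0=q_0$; take $s=p+1$ in Corollary~\ref{coro:star}~(b).
\item Corollary~\ref{coro:star}~(b), with $s-1$ in place of $s$, applies to $u_0-q_0\in H^{k+2}(0,1)$ and needs only the edge regularity.
\item Since $u\in H^1(\widehat\Omega)$, the tangential derivative $\partial_1^{s-1}u$ does not jump across horizontal knot lines, so $\partial_1^{s-1}u\in H^1(R_{E_1})$.
\item The trace inequality on $(0,h_{\mathbf Z})$ and Poincar\'e's inequality on $R_{E_1}$ then give $|u_0-q_0|_{\mathcal H^{s-1}(Z_1;(0,1))}^2\le Ch_{\mathbf Z}|u|_{\mathcal H^s(\mathbf Z;\widehat\Omega)}^2$.
\item This yields the required bound $|u_0-\Pi^{(*)}_{p,k+1,Z_1}u_0|_{H^{t_1}(0,1)}\le Ch_{\mathbf Z}^{s-1/2-t_1}|u|_{\mathcal H^s(\mathbf Z;\widehat\Omega)}$.
\end{itemize}
In the borderline case $s=k+2$ this trick is unavailable, because $\partial^{k+1}q_0$ would have to jump. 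There a stronger hypothesis, or a finer argument, is still needed.
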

\begin{proof}
    Consider the edge $\widehat\Sigma_1$. Let $u$ be arbitrary but fixed.
    As in~\eqref{eq:proof:lem:M2Dincl:1}, we have
    \begin{equation}\label{eq:corrector estimate value:proof-1}
        \edgeextension{1}{0}{f^{(0)}_1}(\xi_1,\xi_2) 
        =
        f^{(0)}_1(\xi_1,0)\phi_{p,Z_2}^{(0)}(\xi_2)
        \quad
        \mbox{where}
        \quad
        f^{(0)}_1:=
        \edgeprojectorS{1}{0} u
        -\tpprojectorsimple u.
    \end{equation}  
    For all integers $m$ with $0\le m\le1$, we obtain using~\eqref{eq:edge_corrections1}, Lemma~\ref{lem:2d trace} and by applying Corollary \ref{coro:star} to $u(\cdot,\xi_2)$ and $\partial_2u(\cdot,\xi_2)$, and by integrating over $\xi_2$
    \begin{align}\nonumber
        &|\edgeprojectorS{1}{0} u - u|_{H^{m}({\widehat\Sigma}_1)}
        =
        |\widehat\Pi^{(*)}_{p,k+1,\mathbf Z, 1} u- u|_{H^{m}({\widehat\Sigma}_1)}
        \\\nonumber
        &\qquad\le
        \sqrt 2
        h_{\mathbf Z}^{-1/2}
        \|\partial_1^m(\widehat\Pi^{(*)}_{p,k+1,\mathbf Z, 1} -I) u\|_{L^2({\widehat\Omega})}
        +
        h_{\mathbf Z}^{1/2} 
        \|\partial_2^m (\widehat\Pi^{(*)}_{p,k+1,\mathbf Z, 1}-I) \partial_2 u \|_{L^2({\widehat\Omega})}
        \\
        &\qquad\leq C h_{\mathbf Z}^{s-1/2-m} |u|_{\mathcal H^s(\mathbf{Z}; \widehat{\Omega})}.
        \label{eq:corrector estimate value:proof-2}
    \end{align}
    By applying Lemmas~\ref{lem:M2D24},~\ref{lem:2D_edge_cond_edge} and~\ref{lem:2d trace} to $u-\tpprojectorsimple u$, we analogously obtain
    \begin{align}\nonumber
            |u - \tpprojectorsimple u|_{H^{m}({\widehat\Sigma}_1)} 
            &\le\sqrt 2
            h_{\mathbf Z}^{-1/2}
            \|\partial_1^m (I - \tpprojectorsimple) u\|_{L^2({\widehat\Omega})}
            +
            h_{\mathbf Z}^{1/2}
            \|\partial_1^m\partial_2(I-\tpprojectorsimple) u\|_{L^{2}({\widehat\Omega})}
            \\
            &\leq C h_{\mathbf Z}^{s - 1/2 - m} |u|_{\mathcal H^s(\mathbf{Z}; \widehat{\Omega})}.
            \label{eq:corrector estimate value:proof-3}
    \end{align}
    Combining~\eqref{eq:corrector estimate value:proof-1}, the triangle inequality, \eqref{eq:corrector estimate value:proof-2} and \eqref{eq:corrector estimate value:proof-3} yields 
    \begin{equation}\label{eq:fi bound}
        |f_1^{(0)}|_{H^{m}({\widehat\Sigma}_1)}
        \leq 
        |\edgeprojectorS{1}{0} u - u|_{H^{m}({\widehat\Sigma}_1)}
        +|u-\tpprojectorsimple u|_{H^{m}({\widehat\Sigma}_1)}
        \leq
        C h_{\mathbf Z}^{s - 1/2 - m} |u|_{\mathcal H^s(\mathbf{Z}; \widehat{\Omega})}.
    \end{equation}
    Using the product rule for Sobolev spaces on rectangles, Lemma~\ref{lem:estimPhi} and~\eqref{eq:fi bound}, we immediately obtain 
    \begin{equation*}
    \begin{aligned}
        &\left|\edgeextension{1}{0}{\edgeprojectorS{1}{0} u-\tpprojectorsimple u}\right|_{H^t(\widehat{\Omega})}^2 
        = \sum_{m=0}^t |f_1^{(0)}|_{H^m({\widehat\Sigma}_1)}^2 |\phi_{p,Z_2}^{(0)}|_{H^{t-m}(0,1)}^2 
        \\
        &\quad\le \sum_{m=0}^t \left( C h_{\mathbf Z}^{s-1/2-m} |u|_{\mathcal H^s(\mathbf{Z}; \widehat{\Omega})} \right)^2 \left( Ch_{\mathbf Z}^{1/2 + m - t} \right)^2
        \le Ch_{\mathbf Z}^{2(s-t)} |u|_{\mathcal H^s(\mathbf{Z}; \widehat{\Omega})}^2,
    \end{aligned}
    \end{equation*}
    which finishes the proof for $\widehat \Sigma_1$. The proof for the other cases can be done analogously.
\end{proof}

Analogously, the next lemma provides an approximation error estimate for the first order boundary correction terms.
\begin{lemma}\label{lem:corrector estimate derivative}
    Let $t$ and $s$ be integers with $0\leq t\leq 2$ and $4\leq s\leq p+1$.
    For all $j\in  \{1,2,3,4\}$ and all $u\in \mathcal{H}^{s,k+1}(\mathbf Z;\widehat\Omega)$ with $u|_{\widehat\Sigma_j} \in H^{\min\{s,k+2\}}(\widehat\Sigma_j)$, we have 
    \begin{equation*}
        \left|\edgeextension{j}{1}{\edgeprojectorS{j}{1} u-\mathbf{n}_j \cdot \nabla \tpprojectorsimple u}\right|_{H^t(\widehat \Omega)} 
        \le C(\mathbf G,s) h_{\mathbf Z}^{s-t}\|u\|_{\mathcal H^s(\mathbf{Z};\widehat\Omega)}.
    \end{equation*}
\end{lemma}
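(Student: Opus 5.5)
We follow the proof of Lemma~\ref{lem:corrector estimate value} and treat only $\widehat\Sigma_1$; the other edges follow by symmetry. The correction term factorizes as
\begin{equation*}
\edgeextension{1}{1}{f^{(1)}_1}(\xi_1,\xi_2) = f^{(1)}_1(\xi_1,0)\,\phi^{(1)}_{p,Z_2}(\xi_2),
\qquad f^{(1)}_1 := \edgeprojectorS{1}{1}u + \partial_2\tpprojectorsimple u .
\end{equation*}
Lemma~\ref{lem:estimPhi} gives $|\phi^{(1)}_{p,Z_2}|_{H^{t-m}(0,1)} \le C h_{\mathbf Z}^{3/2+m-t}$. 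The product rule on the rectangle then reduces the claim to the edge estimate
\begin{equation*}
|f^{(1)}_1|_{H^m(\widehat\Sigma_1)} \le C(\mathbf G,s)\, h_{\mathbf Z}^{s-3/2-m}\|u\|_{\mathcal H^s(\mathbf Z;\widehat\Omega)},
\qquad m=0,1,2 .
\end{equation*}
This is one power of $h_{\mathbf Z}$ less than in \eqref{eq:fi bound}, which is consistent with $f^{(1)}_1$ involving one derivative.

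To get the edge estimate, we use the identity $\mathbf n_1\cdot\nabla u = \alpha_1\,\mathbf d_1\cdot\nabla u - \beta_1\,\mathbf t_1\cdot\nabla u$, which follows from \eqref{eq:partial d def}. Inserting it, we split
\begin{equation*}
f^{(1)}_1 = \alpha_1\bigl(\widehat\Pi^{(2)}_{p-1,k,\mathbf Z,1}(\mathbf d_1\cdot\nabla u) - \mathbf d_1\cdot\nabla u\bigr)
+ \beta_1\,\partial_1\bigl(\edgeprojectorS{1}{0}u - u\bigr)
+ \bigl(\mathbf n_1\cdot\nabla u - \mathbf n_1\cdot\nabla\tpprojectorsimple u\bigr),
\end{equation*}
with all terms restricted to $\xi_2=0$. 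The three terms are bounded as follows.

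\textbf{Second term.} Since $\beta_1$ is linear, it is bounded directly by \eqref{eq:corrector estimate value:proof-2} with $m+1$ in place of $m$. This needs that estimate up to tangential order $3$. Corollary~\ref{coro:star} covers $t\le 3$, and the trace Lemma~\ref{lem:2d trace} applies provided $s\ge4$.

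\textbf{Third term.} Apply Lemma~\ref{lem:2d trace} to $\partial_2(u-\tpprojectorsimple u)$ with $\eta=h_{\mathbf Z}$, then Lemma~\ref{lem:M2D24} with $r=2$. This requires mixed derivatives $\partial_1^m\partial_2^2$ with $m\le 2$, which Lemma~\ref{lem:M2D24} covers since $t_1,t_2\le r=2$. It gives $h_{\mathbf Z}^{s-3/2-m}$.

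\textbf{First term.} The factor $\alpha_1$ is linear, so the Leibniz rule (Lemma~\ref{lem:product rule}) costs only a constant. Set $w:=\mathbf d_1\cdot\nabla u = \alpha_1^{-1}(-\partial_2 u - \beta_1\partial_1 u)$. Apply Lemma~\ref{lem:2d trace} to $(\widehat\Pi^{(2)}_{p-1,k,\mathbf Z,1}-I)w$, then Lemma~\ref{lem:1D_edge_cond}(d) line by line in $\xi_1$ with Sobolev order $s-1\le p$ (degree $p-1$) and $r=2$, as in \eqref{eq:lem:M2D24:proof0}. This yields
\begin{equation*}
C h_{\mathbf Z}^{s-3/2-m}\Bigl(\|\partial_1^{s-1}w\|_{L^2(\mathbf Z;\widehat\Omega)} + \|\partial_1^{s-2}\partial_2 w\|_{L^2(\mathbf Z;\widehat\Omega)}\Bigr).
\end{equation*}
Both norms are bounded by $C(\mathbf G,s)\|u\|_{\mathcal H^s(\mathbf Z;\widehat\Omega)}$ using the Leibniz rule. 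This is where the full norm and $\|1/\alpha_1\|_{W^{s,\infty}}$ enter. Since $\alpha_1>0$ is linear on $[0,1]$, this quantity is controlled by the data in Definition~\ref{def:constants}, possibly after absorbing $\min\alpha_1$ into $C(\mathbf G,s)$.

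\textbf{Main obstacle.} The delicate step is the case $m=2$ in the first term. There we need the second tangential derivative of the $H^2$-Ritz error of degree $p-1$, so Lemma~\ref{lem:1D_edge_cond}(d) must hold with $t=r=2$ and the condition $t\ge 2r-p-1$; this is fine because $p\ge3$. We must also ensure that the regularity of $w$ needed by Lemma~\ref{lem:1D_edge_cond}(d), namely $H^{\min\{s-1,k+1\}}$ along lines, follows from $u\in\mathcal H^{s,k+1}$ and the edge regularity assumption. If it does not, we would handle it with a spline correction as in the proof of Lemma~\ref{lem:1D_edge_cond}.

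Summing the squared bounds over $m=0,\dots,t$ against the bounds for $\phi^{(1)}_{p,Z_2}$ gives $h_{\mathbf Z}^{2(s-t)}$, which finishes the proof.
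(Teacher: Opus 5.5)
Structurally your proof is the paper's. It uses the same factorization $E^{(1)}_1 f^{(1)}_1=f^{(1)}_1(\xi_1,0)\,\phi^{(1)}_{p,Z_2}(\xi_2)$ and the same splitting of $f^{(1)}_1$ into the $\alpha_1$-, $\beta_1$- and $\mathcal Q$-terms (your $w$ is the paper's $g_1$), with the same tools for the first two terms. For the third term the paper first uses $\partial_2\mathcal Q u=\mathcal Q_1\partial_2 u$ on $\widehat\Sigma_1$ and then the univariate estimate. You instead apply Lemma~\ref{lem:2d trace} to $\partial_2(u-\mathcal Q u)$ and then Lemma~\ref{lem:M2D24} with $(t_1,t_2)\in\{(m,1),(m,2)\}$. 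That is admissible ($t_1,t_2\le r=2$, $2r\le s$), gives the same $h_{\mathbf Z}^{s-3/2-m}$, and mirrors what the paper does in \eqref{eq:corrector estimate value:proof-3}.

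The step that fails is the one your ``main obstacle'' paragraph was meant to check, and the paper's proof has the same gap. Since $\widehat\Pi^{(2)}_{p-1,k,\mathbf Z,1}$ has degree $p-1$, Lemma~\ref{lem:1D_edge_cond}(d) requires $t\ge 4-(p-1)-1=4-p$, not $t\ge 2r-p-1$ as you wrote.

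This is harmless for $m=2$. For $m=0$, however, the trace lemma produces $h_{\mathbf Z}^{-1/2}\|(\widehat\Pi^{(2)}_{p-1,k,\mathbf Z,1}-I)w\|_{L^2(\widehat\Omega)}$ and $h_{\mathbf Z}^{1/2}\|(\widehat\Pi^{(2)}_{p-1,k,\mathbf Z,1}-I)\partial_2 w\|_{L^2(\widehat\Omega)}$. That is the case $t=0$, which is covered only for $p\ge 4$.

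For $p=3$ (admissible with $k=1$, $s=4$) the bound is actually false. The projection $\Pi^{(2)}_{2,1,Z}q$ is fixed by two conditions: $\partial^2\Pi^{(2)}_{2,1,Z}q$ equals the elementwise mean of $q''$, and $q,q'$ are interpolated at $0$ only. For uniform $Z$ and $q(\xi)=\xi^3/6$ one computes $(q-\Pi^{(2)}_{2,1,Z}q)(z_\ell)=-z_\ell h^2/12$, so the $L^2$-error is of order $h^2$, not $h^3$.

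Now take an edge with $\alpha_1\equiv 1$, $\beta_1\equiv 0$ (e.g.\ a boundary edge) and $u=\xi_2\xi_1^3/6$. Then $\mathcal Q u=u$, hence $f^{(1)}_1(\cdot,0)=q-\Pi^{(2)}_{2,1,Z_1}q$. Combine this with $\|\phi^{(1)}_{3,Z_2}\|_{L^2}\gtrsim h^{3/2}$ and $|\phi^{(1)}_{3,Z_2}|_{H^2}\gtrsim h^{-1/2}$. Both follow from $\phi^{(1)}_{3,Z_2}(0)=0$, $\partial\phi^{(1)}_{3,Z_2}(0)=1$, the upper bound of Lemma~\ref{lem:estimPhi}, and the $O(h)$ support. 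One obtains $\|E^{(1)}_1 f^{(1)}_1\|_{L^2(\widehat\Omega)}\gtrsim h^{7/2}$ and $|E^{(1)}_1 f^{(1)}_1|_{H^2(\widehat\Omega)}\gtrsim h^{3/2}$, contradicting the claimed $C h^{4-t}$.

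So your argument, like the paper's, proves the lemma only for $p\ge 4$. For $p=3$ the crossing-derivative projector would have to be replaced by one that interpolates at both endpoints and has full-order $L^2$ error, in the spirit of $\Pi^{(*)}$ in Lemma~\ref{lem:low p projector}. That would also restore the right-endpoint interpolation that Lemma~\ref{lem:no-interference} relies on.
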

\begin{proof}
    Consider the edge $\widehat\Sigma_1$ and note that $\mathbf{n}_1=(0,-1)$ and $\mathbf{t}_1=(-1,0)$. Let $u$ be arbitrary but fixed. As above, we have
    \begin{equation*}
        \edgeextension{1}{1}{f^{(1)}_1}(\xi_1,\xi_2)
        =
        f^{(1)}_1(\xi_1,0)
        \phi_{p,Z_2}^{(1)}(\xi_2),
        \quad
        \mbox{where}
        \quad
        f^{(1)}_1 := \edgeprojectorS{1}{1} u+\partial_2 \tpprojectorsimple u.
    \end{equation*}
    Using \eqref{eq:edge_corrections2}, \eqref{eq:edge_corrections1}, and \eqref{eq:partial d def},
    we obtain
    \begin{equation*}
    \begin{aligned}
        f^{(1)}_1
        &=
        \alpha_1 \widehat\Pi^{(2)}_{p-1,k,\mathbf Z,1}\left(\mathbf{d}_1 \cdot \nabla u \right) + \beta_1 \partial_1 \edgeprojectorS{1}{0} u + \partial_2 \tpprojectorsimple u \\
        &=
        \alpha_1 \left(
        \widehat\Pi^{(2)}_{p-1,k,\mathbf Z,1}g_1 
        -
        g_1\right)
        +
        \alpha_1 g_1
        + \beta_1 \partial_1 \widehat\Pi_{p,k+1,\mathbf Z, 1}^{(*)} u 
        + \partial_2 \tpprojectorsimple u\\
        &=
        \alpha_1 \left(
        \widehat\Pi^{(2)}_{p-1,k,\mathbf Z,1}g_1 
        -
        g_1\right)
        + \beta_1\partial_1  \left(\widehat\Pi_{p,k+1,\mathbf Z, 1}^{(*)} u - u\right)
        + \partial_2 \left(\tpprojectorsimple u - u\right),
    \end{aligned}
    \end{equation*}
    where
    \begin{equation*}
        g_1(\xi_1,\xi_2)
        := 
        \frac{-\partial_2 u(\xi_1,\xi_2)-\beta_1(\xi_1) \partial_1 u(\xi_1,\xi_2)}{\alpha_1(\xi_1)}.
    \end{equation*}
    Using the triangle inequality, we have for all integers $m$ with $0\le m\le 2$
    \begin{align}\nonumber
    \left|f_1^{(1)}\right|_{H^{m}(\widehat\Sigma_1)}
    &\le
    \left|\alpha_1 \left( \widehat\Pi^{(2)}_{p-1,k,\mathbf{Z},1}g_1 - 
        g_1 \right)\right|_{H^{m}(\widehat\Sigma_1)}
    +\left|\beta_1\partial_1 (\widehat\Pi^{(*)}_{p,k+1,\mathbf Z,1}u-u)\right|_{H^{m}(\widehat\Sigma_1)}
    \\&\qquad+\left|\partial_2(\tpprojectorsimple u-u)\right|_{H^{m}(\widehat\Sigma_1)}.\label{eq:proof:corrector estimate derivative-1-new}
    \end{align}
    To estimate the first term, we use Lemma~\ref{lem:product rule} with
    $\|\alpha_1\|_{W^{m,\infty}(\widehat\Sigma_1)}\le C(\mathbf G,s)$,
    Lemma~\ref{lem:2d trace} with $\eta=h_{\mathbf Z}$, Lemma~\ref{lem:1D_edge_cond}~(d)
    and again Lemma~\ref{lem:product rule} with $\|\alpha_1^{-1}\|_{W^{s-1,\infty}(\widehat\Sigma_1)}\le C(\mathbf G,s)$ and $\|\alpha_1^{-1}\beta_1 \|_{W^{s-1,\infty}(\widehat\Sigma_1)}\le C(\mathbf G,s)$ to obtain 
    \begin{equation}\label{eq:proof:corrector estimate derivative-2-new}
    \begin{aligned}
        &\left|\alpha_1 \left( \widehat\Pi^{(2)}_{p-1,k,\mathbf{Z},1}g_1 - 
        g_1 \right)\right|_{H^{m}(\widehat\Sigma_1)}
        \le C(\mathbf G,s) \|\widehat\Pi^{(2)}_{p-1,k,\mathbf{Z},1}g_1 - 
        g_1 \|_{H^{m}(\widehat\Sigma_1)}
        \\&\quad
        \le
        C(\mathbf G,s)\sum_{\sigma=0}^m\sum_{\sigma'=0}^1
        h_{\mathbf Z}^{\sigma'-1/2}
        \|\partial_1^{\sigma} (\Pi^{(2)}_{p-1,k,\mathbf Z,1}-I)\partial_2^{\sigma'} g_1 \|_{L^{2}(\widehat\Omega)}
        \\
        &\quad
        \le
        C(\mathbf G,s) h_{\mathbf Z}^{s-3/2-m}
        |g_1|_{\mathcal{H}^{s-1}(\mathbf{Z};\widehat\Omega)}
        \\&\quad
        \le
        C(\mathbf G,s) h_{\mathbf Z}^{s-3/2-m}
        \left(
        |\alpha_1^{-1} \partial_2 u|_{\mathcal{H}^{s-1}(\mathbf{Z};\widehat\Omega)}
        +
        |\alpha_1^{-1} \beta_1 \partial_1 u|_{\mathcal{H}^{s-1}(\mathbf{Z};\widehat\Omega)}
        \right)
        \\
        &\quad
        \le
        C(\mathbf G,s) h_{\mathbf Z}^{s-3/2-m}
        \|u\|_{\mathcal{H}^{s}(\mathbf{Z};\widehat\Omega)}.
    \end{aligned}
    \end{equation}
    To estimate the second term in~\eqref{eq:proof:corrector estimate derivative-1-new}, we note that $\partial_1$ is a derivative along the edge. By applying
    Lemma~\ref{lem:product rule} with
    $\|\beta_1\|_{W^{m,\infty}(\widehat\Sigma_1)}\le C(\mathbf G,s)$,
    Lemma~\ref{lem:2d trace} and Corollary~\ref{coro:star} (b), we obtain 
    \begin{equation}\label{eq:proof:corrector estimate derivative-3-new}
     \begin{aligned}
        &\left|\beta_1\partial_1 (\widehat\Pi^{(*)}_{p,k+1,\mathbf Z,1}u-u)\right|_{H^{m}(\widehat\Sigma_1)}
        \le
        C(\mathbf{G},s)
         \left\|\partial_1(\widehat\Pi^{(*)}_{p,k+1,\mathbf Z, 1}u - u)\right\|_{H^m(\widehat\Sigma_1)} \\
        &\qquad\le C(\mathbf{G},s) \sum_{\sigma=1}^{m+1}\sum_{\sigma'=0}^{1}
        h_{\mathbf Z}^{\sigma'-1/2}
        \|\partial_1^{\sigma}(\widehat\Pi^{(*)}_{p,k+1,\mathbf Z, 1}-I)\partial_2^{\sigma'} u\|_{L^{2}({\widehat\Omega})} 
        \\
        &\qquad
        \le C(\mathbf G,s) h_{\mathbf Z}^{s-3/2-m} |u|_{\mathcal{H}^{s}(\mathbf{Z};\widehat\Omega)}.
    \end{aligned}
    \end{equation}
    For the third term in~\eqref{eq:proof:corrector estimate derivative-1-new}, the commuting property from Lemma~\ref{lem:2D_edge_cond_edge} implies that $\partial_2 \tpprojectorsimple u = \tpprojectorsimpleI{1} \partial_2 u$ on $\widehat\Sigma_1$. 
    Moreover, we observe $\partial_2 \tpprojectorsimpleI{1}u=\tpprojectorsimpleI{1} \partial_2 u$.
    By applying Lemma~\ref{lem:2d trace} to the error, we obtain
    \begin{equation*}
    \begin{aligned}
        &\left|\partial_2(\tpprojectorsimple u-u)\right|_{H^{m}(\widehat\Sigma_1)}
        = |(\tpprojectorsimpleI{1} -I)\partial_2 u|_{H^{m}(\widehat\Sigma_1)} \\
        &\qquad\le \sqrt{2}h_{\mathbf Z}^{-1/2} \|\partial_1^m(\tpprojectorsimpleI{1} -I)\partial_2 u\|_{L^{2}(\widehat\Omega)} + h_{\mathbf Z}^{1/2} \|\partial_1^m(\tpprojectorsimpleI{1} -I)\partial_2^2 u\|_{L^{2}(\widehat\Omega)}.
    \end{aligned}
    \end{equation*}
    Using Lemma~\ref{lem:1D_edge_cond} (d) for the functions $\partial_2 u$ and $\partial_2^2u$, we arrive at
    \begin{equation}\label{eq:proof:corrector estimate derivative-4-new}
    \begin{aligned}
        \left|\partial_2(\tpprojectorsimple u-u)\right|_{H^{m}(\widehat\Sigma_1)}
        &\le C h_{\mathbf Z}^{s-3/2-m} |u|_{\mathcal{H}^s(\mathbf{Z};\widehat\Omega)}.
    \end{aligned}
    \end{equation}

    By combining~\eqref{eq:proof:corrector estimate derivative-1-new}, \eqref{eq:proof:corrector estimate derivative-2-new}, \eqref{eq:proof:corrector estimate derivative-3-new} and~\eqref{eq:proof:corrector estimate derivative-4-new}, we have
    \begin{equation}\label{eq:fi1 bound-new}
        |f_1^{(1)}|_{H^{m}({\widehat\Sigma}_1)} \leq C(\mathbf G,s) h_{\mathbf Z}^{s-3/2 - m} \|u\|_{\mathcal{H}^s(\mathbf{Z};\widehat\Omega)}.
    \end{equation}
    Finally, we apply the product rule for Sobolev spaces on rectangles,
    Lemma~\ref{lem:estimPhi} and the bound \eqref{eq:fi1 bound-new} to obtain 
    \begin{equation*}
        \left|\edgeextension{1}{1}{f^{(1)}_1}\right|_{H^t(\widehat{\Omega})}^2 
        = 
        \sum_{m=0}^t
        |f_1^{(1)}|_{H^m({\widehat\Sigma}_1)}^2 |\phi_{p,Z_2}^{(1)}|_{H^{t-m}(0,1)}^2
        \leq C(\mathbf G,s) h_{\mathbf Z}^{2(s-t)} \|u\|_{\mathcal{H}^s(\mathbf{Z};\widehat\Omega)}^2.
    \end{equation*}
    This finishes the proof for $\widehat\Sigma_1$. The proof for the other edges is analogous.
\end{proof}

This theorem is the main approximation error estimate for the modified tensor-product projector. 

\begin{theorem}\label{thm:M2D24} Let $t$ and $s$ be integers with $0\leq t\leq 2$ and $4\leq s\leq p+1$. For all $u\in \mathcal{H}^{s,k+1}(\mathbf Z;\widehat\Omega)$, with $u|_{\Sigma_j} \in H^{\min\{s,k+2\}}(\Sigma_j)$ for all $j\in  \{1,2,3,4\}$, we have 
\begin{equation*}
    \|u-\tpprojectorS u\|_{H^t(\widehat \Omega)} \le C(\mathbf G,s) h_{\mathbf Z}^{s-t} \|u\|_{\mathcal H^s(\mathbf{Z};\widehat\Omega)}.
\end{equation*}
\end{theorem}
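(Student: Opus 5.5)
The estimate follows from the definition \eqref{eq:ModifProj} by the triangle inequality, using the bounds already proven for the unmodified projector and for each correction term. Write
\begin{equation*}
    u-\tpprojectorS u = (u-\tpprojectorsimple u) - \sum_{\sigma=0}^1\sum_{j=1}^4 \edgeextension{j}{\sigma}{\edgeprojectorS{j}{\sigma} u-(\mathbf{n}_j\cdot\nabla)^\sigma\tpprojectorsimple u},
\end{equation*}
so that $\|u-\tpprojectorS u\|_{H^t(\widehat\Omega)}$ is bounded by $\|u-\tpprojectorsimple u\|_{H^t(\widehat\Omega)}$ plus the eight $H^t$-norms of the correction terms. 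Since $t\le 2$, the full $H^t$-norm is a sum of seminorms of orders $t'=0,\dots,t$. I will bound each seminorm separately and use $h_{\mathbf Z}\le 1$ to write $h_{\mathbf Z}^{s-t'}\le h_{\mathbf Z}^{s-t}$.

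For the tensor-product part, I apply Lemma~\ref{lem:M2D24} with $r=2$ to each mixed derivative $\partial_1^{t_1}\partial_2^{t_2}$ with $t_1+t_2=t'\le 2$, so $t_1,t_2\le 2=r$. The hypotheses hold: $r=2\le k+1\le p$ because $3\le k+2\le p$, and $2r=4\le s\le p+1$. The required regularity $u\in\mathcal H^{s,k+1}(\mathbf Z;\widehat\Omega)$ is assumed. This gives $|u-\tpprojectorsimple u|_{H^{t'}(\widehat\Omega)}\le C h_{\mathbf Z}^{s-t'}|u|_{\mathcal H^s(\mathbf Z;\widehat\Omega)}$. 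The well-definedness of $\tpprojectorsimple$ and the endpoint interpolation used later need $p\ge 2r-1=3$, which holds.

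For the correction terms, Lemma~\ref{lem:corrector estimate value} handles $\sigma=0$ and Lemma~\ref{lem:corrector estimate derivative} handles $\sigma=1$. Both give $h_{\mathbf Z}^{s-t'}$ times $|u|_{\mathcal H^s}$ or $C(\mathbf G,s)\|u\|_{\mathcal H^s}$ for every $t'\le 2$ and every edge $j$. Their hypotheses are exactly the ones assumed in the theorem, including the edge regularity $u|_{\widehat\Sigma_j}\in H^{\min\{s,k+2\}}$, which makes $\Pi^{(*)}_{p,k+1}$ applicable through Corollary~\ref{coro:star}. Summing the nine contributions and absorbing the factor $3$ from the orders $t'\le 2$ and the universal constants into $C(\mathbf G,s)$ gives the claim.

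The main obstacle is not the estimate itself but the bookkeeping of hypotheses. The correction-term lemmas are stated for seminorms and require the traces to be regular enough for the one-dimensional projectors $\Pi^{(*)}_{p,k+1,Z_j}$ and $\Pi^{(2)}_{p-1,k,Z_j}$. For the latter I must check that the degree/smoothness conditions of Lemma~\ref{lem:1D_edge_cond} hold: $2\le k+1\le p-1$ follows from $k+2\le p$, and $s-1\le p$. The Lipschitz-type factor $\|\alpha_j^{-1}\|_{W^{s-1,\infty}}$ is controlled by $C(\mathbf G,s)$ because the gluing data are linear with $\alpha_j>0$ on $[0,1]$. Once these checks are confirmed, the proof is a direct summation.
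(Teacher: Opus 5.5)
Your proposal is correct and follows the paper's proof. The paper likewise bounds each seminorm $|u-\tpprojectorS u|_{H^m(\widehat\Omega)}$, $0\le m\le 2$, using the definition \eqref{eq:ModifProj}, Lemma~\ref{lem:M2D24} with $r=2$, and Lemmas~\ref{lem:corrector estimate value} and~\ref{lem:corrector estimate derivative}, and then sums over $m\le t$ using $h_{\mathbf Z}\le 1$. Your extra hypothesis bookkeeping is consistent with, but not needed beyond, what those three lemmas already assume.
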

\begin{proof}
    For all integers $m$ with $0\leq m\leq 2$, the estimate
    $|u-\tpprojectorS u|_{H^{m}(\widehat \Omega)}^2 \le C(\mathbf G,s) h_{\mathbf Z}^{2(s-m)} \|u\|_{\mathcal H^s(\mathbf{Z};\widehat\Omega)}^2$ follows from the definition of the projector, i.e.,~\eqref{eq:ModifProj}, and Lemmas~\ref{lem:M2D24}, \ref{lem:corrector estimate value} and~\ref{lem:corrector estimate derivative}.
    Since $h_{\mathbf Z}\le 1$, the estimate for the norm follows immediately by taking the sum for $m=0,\dots,t$.
\end{proof}

The following lemma states that the extensions from different edges do not interfere with each other. This is characterized by conditions on the traces and normal derivatives of the correction terms. 
\begin{lemma}\label{lem:no-interference}
    Let $u\in H^4(\widehat\Omega)$ be arbitrary but fixed and define
    for $j\in  \{1,2,3,4\}$ and $\sigma\in\{0,1\}$
    \begin{equation*}
        f_j^{(\sigma)}:=\edgeprojectorS{j}{\sigma}  u - \left(\mathbf{n}_j\cdot\nabla\right)^\sigma \tpprojectorsimple u.
    \end{equation*}
    We then have
    \begin{align}
        \label{eq:no-interference:2}
         \edgeextension{j}{0}{f_j^{(0)}} \big|_{\widehat\Sigma_{j}} &= f_j^{(0)} \big|_{\widehat\Sigma_{j}} & \mbox{ and }\quad
                \mathbf{n}_j\cdot\nabla \edgeextension{j}{0}{f_j^{(0)}} \big|_{\widehat\Sigma_{j}} 
        &= 0,
        \\
        \label{eq:no-interference:3}
        \edgeextension{j}{1}{f_j^{(1)}} \big|_{\widehat\Sigma_{j}} &= 0 & \mbox{ and }\quad
                \mathbf{n}_j\cdot\nabla \edgeextension{j}{1}{f_j^{(1)}} \big|_{\widehat\Sigma_{j}} 
        &= f_j^{(1)} \big|_{\widehat\Sigma_{j}},
    \end{align}
    as well as
    \begin{equation}
        \label{eq:no-interference:1}
        \edgeextension{j}{\sigma}{f_j^{(\sigma)}} \big|_{\widehat\Sigma_{j'}} =
        \mathbf{n}_{j'}\cdot\nabla 
        \edgeextension{j}{\sigma}{f_j^{(\sigma)}} \big|_{\widehat\Sigma_{j'}} 
        = 0
    \end{equation}
    for all $j' \in\{1,2,3,4\}\setminus\{j\}$.
\end{lemma}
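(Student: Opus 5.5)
The plan is to reduce everything to the edge $\widehat\Sigma_1$, since the other edges follow by the symmetric definitions of $\edgeextensionS{j}{\sigma}$ (which use reversed partitions and $1-\xi$ exactly so that the roles of $0$ and $1$ are mirrored). For $j=1$ the extension has the separated form $\edgeextension{1}{\sigma}{f_1^{(\sigma)}}(\xi_1,\xi_2)=f_1^{(\sigma)}(\xi_1,0)\,\phi_{p,Z_2}^{(\sigma)}(\xi_2)$. Every trace and normal derivative we need is then a product of a function of $\xi_1$ with a value or derivative of $\phi^{(\sigma)}_{p,Z_2}$. These are prescribed by~\eqref{eq:phi} in Lemma~\ref{lem:estimPhi}: $\partial^t\phi^{(\sigma)}_{p,Z_2}(0)=\vardelta_{\sigma,t}$ and $\partial^t\phi^{(\sigma)}_{p,Z_2}(1)=0$ for $0\le t\le2$.

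First I treat $\widehat\Sigma_1$ itself. With $\mathbf n_1\cdot\nabla=-\partial_2$, the trace at $\xi_2=0$ equals $f_1^{(\sigma)}(\xi_1,0)\,\vardelta_{\sigma,0}$, and the normal derivative equals $-f_1^{(\sigma)}(\xi_1,0)\,\vardelta_{\sigma,1}$. This gives~\eqref{eq:no-interference:2}. It also gives~\eqref{eq:no-interference:3}, up to the sign convention for $\phi^{(1)}$ relative to $\mathbf n_1=(0,-1)$. I have to check this sign carefully against~\eqref{eq:ModifProj}: the correction subtracts $\mathbf n_j\cdot\nabla\tpprojectorsimple u$, so the extension must reproduce $+f$ in the $\mathbf n_1$ direction. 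If $\phi^{(1)}$ has $\partial\phi^{(1)}(0)=+1$, the identity holds with the normal convention implicit in the definition of $f^{(1)}_1$ (the proof of Lemma~\ref{lem:corrector estimate derivative} writes $f_1^{(1)}=\edgeprojectorS{1}{1}u+\partial_2\tpprojectorsimple u$). I will state it consistently with that.

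Next comes the opposite edge $\widehat\Sigma_3$ ($\xi_2=1$). The trace and the derivative $\partial_2$ involve $\phi^{(\sigma)}(1)$ and $\partial\phi^{(\sigma)}(1)$, and both vanish by~\eqref{eq:phi}. For the adjacent edges $\widehat\Sigma_2$ and $\widehat\Sigma_4$ ($\xi_1=1$ and $\xi_1=0$), the trace is $f_1^{(\sigma)}(1,0)$ or $f_1^{(\sigma)}(0,0)$ times $\phi^{(\sigma)}(\xi_2)$. The normal derivative is $\pm\partial_1 f_1^{(\sigma)}$ evaluated at the corner, times $\phi^{(\sigma)}(\xi_2)$. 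So it suffices to show that $f_1^{(\sigma)}$ and $\partial_1 f_1^{(\sigma)}$ vanish at $\xi_1\in\{0,1\}$ on $\xi_2=0$.

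That corner vanishing is the main step. For $\sigma=0$, $f_1^{(0)}(\cdot,0)=\Pi^{(*)}_{p,k+1,Z_1}u(\cdot,0)-\Pi^{(2)}_{p,k,Z_1}u(\cdot,0)$, using Lemma~\ref{lem:2D_edge_cond_edge}. Both projectors interpolate the value and first derivative at both endpoints: the first by Corollary~\ref{coro:star}~(a), the second by Lemma~\ref{lem:1D_edge_cond}~(b),(c) with $r=2$, $p\ge3$. Hence the difference vanishes to first order at the corners.

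For $\sigma=1$, I write $f_1^{(1)}(\cdot,0)=\alpha_1\Pi^{(2)}_{p-1,k,Z_1}(g_1)+\beta_1\partial_1\Pi^{(*)}_{p,k+1,Z_1}u(\cdot,0)+\Pi^{(2)}_{p,k,Z_1}\partial_2u(\cdot,0)$, with $g_1=-\alpha_1^{-1}(\partial_2u+\beta_1\partial_1u)$ as in the proof of Lemma~\ref{lem:corrector estimate derivative}. At $\xi_1=0,1$, value interpolation gives $-(\partial_2u+\beta_1\partial_1u)+\beta_1\partial_1u+\partial_2u=0$. For the $\xi_1$-derivative, the product rule produces $\alpha_1'\Pi g_1+\alpha_1\partial_1\Pi g_1$ and $\beta_1'\partial_1\Pi^{(*)}u+\beta_1\partial_1^2\Pi^{(*)}u$. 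Here I need interpolation of first derivatives by $\Pi^{(2)}_{p-1,k,Z_1}$, which requires $p-1\ge3$, true since $p\ge k+2\ge3$ and hence $p-1\ge3$ when $k\ge1$... and that is the delicate point. I also need second derivatives for $\Pi^{(*)}$, which Corollary~\ref{coro:star}~(a) supplies. With these interpolation properties, each term equals its exact counterpart, and the sum is $\partial_1$ of $\alpha_1 g_1+\beta_1\partial_1u+\partial_2u\equiv0$, hence zero. Regularity $u\in H^4(\widehat\Omega)$ ensures that all needed point values exist by Sobolev embedding. The remaining edges follow by symmetry.
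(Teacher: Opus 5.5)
Your argument follows the paper's own proof. It uses the separated form of $\edgeextensionS{1}{\sigma}$, the endpoint data \eqref{eq:phi} of $\phi^{(\sigma)}_{p,Z_2}$, and endpoint interpolation of the one-dimensional projectors at the corners of $\widehat\Sigma_1$. The $\sigma=0$ part and the opposite edge $\widehat\Sigma_3$ are fine, but two steps do not hold as written.

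\textbf{The sign in \eqref{eq:no-interference:3}.} Your computation $\mathbf n_1\cdot\nabla\edgeextension{1}{1}{f_1^{(1)}}\big|_{\widehat\Sigma_1}=-f_1^{(1)}(\cdot,0)\,\partial\phi^{(1)}_{p,Z_2}(0)=-f_1^{(1)}(\cdot,0)$ is correct. No convention ``implicit in $f_1^{(1)}$'' rescues it, because the definition of $f_1^{(1)}$ plays no role in this computation. The same minus sign appears on all four edges, from the outward normal or from the reflection $1-\xi$. So, with $\partial\phi^{(1)}(0)=1$, the second identity in \eqref{eq:no-interference:3} is false as stated; the paper's proof overlooks this as well. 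You should say so explicitly and repair it, e.g.\ by inserting a minus sign in the definition of $\edgeextensionS{j}{1}$. This changes none of the other identities or estimates, and it is exactly what Lemma~\ref{lem:M2Dedge_cond} needs.

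\textbf{The corner $\xi_1=1$ for $\sigma=1$.} This is the more serious problem. The vanishing of $f_1^{(1)}(1,0)$ (and of $\partial_1 f_1^{(1)}(1,0)$) uses $\big(\Pi^{(2)}_{p-1,k,Z_1}g_1(\cdot,0)\big)(1)=g_1(1,0)$. Lemma~\ref{lem:1D_edge_cond}~(c) provides this only when the degree satisfies $p-1\ge 2r-1=3$.

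Your inference ``$p\ge k+2\ge 3$, hence $p-1\ge 3$'' is wrong. The case $p=3$, $k=1$ is admitted by the standing assumptions and by Theorem~\ref{thrm:main} with $s=4$, and there the claim genuinely fails:
\begin{itemize}
\item $\partial^2 S_{2,1,Z_1}$ consists of all piecewise constants. Hence $e:=g-\Pi^{(2)}_{2,1,Z_1}g$ satisfies $e(0)=e'(0)=0$ and $\int_E e''\,\mathrm d\xi=0$ on every element.
\item Consequently $(\Pi^{(2)}_{2,1,Z_1}g)'$ is the piecewise linear interpolant $Ig'$ of $g'$. Then $e(1)=\sum_E\int_E(g'-Ig')\,\mathrm d\xi<0$ for $g(\xi)=\xi^3$.
\item Take $u(\xi_1,\xi_2)=-\xi_2\,\alpha_1(\xi_1)\,g(\xi_1)$. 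Then $u(\cdot,0)=0$ and $g_1(\cdot,0)=g$.
\item Since $\Pi^{(2)}_{3,1,Z_1}$ does interpolate at $1$, this gives $f_1^{(1)}(1,0)=-\alpha_1(1)\,e(1)\neq0$. Hence $\edgeextension{1}{1}{f_1^{(1)}}\big|_{\widehat\Sigma_2}\neq0$, contradicting \eqref{eq:no-interference:1}.
\end{itemize}
The paper's proof invokes Lemma~\ref{lem:1D_edge_cond}~(c) at the same point without checking the degree, so it shares this gap. Your argument is complete for $p\ge4$. For $p=3$ one must either exclude that case or replace $\Pi^{(2)}_{p-1,k,Z_j}$ by a modified projector that also matches the value at the right endpoint, in the spirit of Lemma~\ref{lem:low p projector}.
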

\begin{proof}
    Consider the case $j=1$. We have
    \begin{equation*}
            \edgeextension{1}{\sigma}{f_1^{(\sigma)}}(\xi_1,\xi_2)
            =
            f_1^{(\sigma)}(\xi_1,0)
            \phi^{(\sigma)}_{p,Z_2}(\xi_2).
    \end{equation*}
    Lemma~\ref{lem:estimPhi} states that $\partial^{\sigma'}\phi^{(\sigma)}_{p,Z_2}(1)=0$, for $\sigma'\in\{0,1\}$, which shows~\eqref{eq:no-interference:1} for $j'=3$, i.e., the edge opposite to $\widehat \Sigma_1$. Moreover, it states $\partial^{\sigma'}\phi^{(\sigma)}_{p,Z_2}(0)=\vardelta_{\sigma,\sigma'}$ for $\sigma, \sigma'\in\{0,1\}$,
    where $\vardelta_{\sigma,\sigma'}$ is the Kronecker-delta, which shows~\eqref{eq:no-interference:2} and~\eqref{eq:no-interference:3}, i.e., the proper interpolation on the edge $\widehat \Sigma_1$ itself.
    
    In order to show~\eqref{eq:no-interference:1} for $j'\in\{2,4\}$, consider the case $\sigma=0$ first. We have using~\eqref{eq:edge_corrections1} and Lemma~\ref{lem:2D_edge_cond_edge},
    and using~\eqref{eq:one-directional-projectors}
    \begin{equation*}
    \begin{aligned}
        f_1^{(0)}(\cdot,0)
        =
        \left(\widehat\Pi_{p,k+1,\mathbf Z, 1}^{(*)} u\right)(\cdot,0)
        -
        \left(\tpprojectorsimpleI{1} u\right)(\cdot,0)
        =
        \widehat\Pi_{p,k+1,Z_1}^{(*)} u(\cdot,0)
        -
        \widehat\Pi_{p,k,Z_1}^{(2)} u(\cdot,0).
    \end{aligned}
    \end{equation*}
    Corollary~\ref{coro:star} (a) and
    Lemma~\ref{lem:1D_edge_cond} (b) and (c),
    state that these two projectors interpolate function values and (at least) first derivatives on the boundary, so we have
    $f_1^{(0)}(0,0)=\partial_{1}f_1^{(0)}(0,0)
    = f_1^{(0)}(1,0)=\partial_{1}f_1^{(0)}(1,0) = 0$, which shows~\eqref{eq:no-interference:1} for $j'\in\{2,4\}$, $\sigma'\in\{0,1\}$ and $\sigma=0$.
    
    For the case $\sigma=1$, we first note that we have, using $\mathbf{n}_1=(0,-1)$, $\mathbf{t}_1=(-1,0)$, $\mathbf{d}_1= \frac{1}{\alpha_1}\left(\mathbf{n}_1 + \beta_1 \mathbf{t}_1 \right)=(-\beta_1/\alpha_1,-1/\alpha_1)$,~\eqref{eq:edge_corrections1}, and~\eqref{eq:edge_corrections2}, 
    \begin{equation*}
    \begin{aligned}
        f_1^{(1)}(\cdot,0)
        &= \left(\alpha_1 \,
        \widehat\Pi^{(2)}_{p-1,k,\mathbf Z, 1}
            \left( \mathbf{d}_1 \cdot \nabla u \right)
        + \beta_1 \partial_1 \edgeprojectorS{j}{0} u + \partial_2 \tpprojectorsimple u\right)(\cdot,0)  \\
        &  =
        \alpha_1 \,
        \Pi^{(2)}_{p-1,k, Z_1}
        \left(\frac{-w-\beta_1 \partial_{1} v}{\alpha_1}\right)
        + \beta_1 \partial_{1} \Pi^{(*)}_{p,k+1,Z_1}v
        + \Pi^{(2)}_{p,k,Z_1} w.
    \end{aligned}
    \end{equation*}
    Here and in what follows, we use
    $v:=u(\cdot,0)$ and $w:=\partial_2 u(\cdot,0)$.
    Corollary~\ref{coro:star} (a) and
    Lemma~\ref{lem:1D_edge_cond} (b) and (c) yield
    $f_1^{(1)}(0,0)=f_1^{(1)}(1,0)=0$, which shows
    \begin{equation}\label{eq:proof-no-interference-110}
    \edgeextension{1}{1}{f_1^{(1)}} \big|_{\widehat\Sigma_{j'}} = 0
    \end{equation}
    for $j'\in\{2,4\}$.
    By taking the derivative, we have 
    \begin{align}\nonumber
        \partial_1
        f_1^{(1)}
        &=
        (\partial_1\alpha_1) \,
        \Pi^{(2)}_{p-1,k,\mathbf Z, 1}
        \left(\frac{-w-\beta_1 \partial_{1} v}{\alpha_1}\right)
        +
        \alpha_1 \,
        \partial_1
        \Pi^{(2)}_{p-1,k,\mathbf Z, 1}
        \left(\frac{-w-\beta_1 \partial_{1} v}{\alpha_1}\right)
        \\&\qquad
        + (\partial_{1}\beta_1)\partial_{1} \Pi^{(*)}_{p,k+1,\mathbf Z, 1} v
        + \beta_1\partial_{1}^2 \Pi^{(*)}_{p,k+1,\mathbf Z, 1} v
        + \partial_{1} \tpprojectorsimpleI{1} w.\label{eq:proof-no-interference-4}
    \end{align}
    Again, using Corollary~\ref{coro:star} (a) and
    Lemma~\ref{lem:1D_edge_cond} (b) and (c) and standard rules for differentiation, we have 
    $\partial_{1}f_1^{(1)}(0,0)=\partial_{1}f_1^{(1)}(1,0)=0$, which, together with~\eqref{eq:proof-no-interference-110}, shows~\eqref{eq:no-interference:1} for $j'\in\{2,4\}$ and $\sigma=1$. This finishes the proof.
\end{proof}

\begin{lemma}\label{lem:M2Dedge_cond}
    The operator $\tpprojectorS$ interpolates on the edges $\widehat\Sigma_j$, $j\in  \{1,2,3,4\}$, in the sense that
    \begin{equation*}
    (\tpprojectorS u)\big|_{\widehat \Sigma_j} 
    = (\edgeprojectorS{j}{0} u) \big|_{\widehat \Sigma_j} 
    \quad\mbox{and}\quad
    (\mathbf{n}_j \cdot \nabla \tpprojectorS u)\big|_{\widehat \Sigma_j} 
    = (\edgeprojectorS{j}{1} u)\big|_{\widehat \Sigma_j} 
    \quad\mbox{for all}\quad u\in H^4(\widehat\Omega).
    \end{equation*}
\end{lemma}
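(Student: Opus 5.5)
Fix an edge $\widehat\Sigma_j$; by symmetry we may take $j=1$. The proof is a direct computation from the definition \eqref{eq:ModifProj}, combined with the non-interference properties of Lemma~\ref{lem:no-interference}. Both the trace and the normal derivative on $\widehat\Sigma_1$ are linear, so we apply them term by term to
\begin{equation*}
\tpprojectorS u = \tpprojectorsimple u + \sum_{\sigma=0}^1\sum_{j'=1}^4 \edgeextension{j'}{\sigma}{f_{j'}^{(\sigma)}},
\qquad f_{j'}^{(\sigma)} := \edgeprojectorS{j'}{\sigma} u - (\mathbf n_{j'}\cdot\nabla)^\sigma \tpprojectorsimple u.
\end{equation*}
All terms are well defined for $u\in H^4(\widehat\Omega)$. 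Sobolev embedding gives $u\in C^2(\widehat\Omega)$, so the traces used in $\edgeprojectorS{j}{\sigma}$ are admissible.

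For the trace, \eqref{eq:no-interference:1} shows that every term with $j'\ne 1$ has vanishing trace on $\widehat\Sigma_1$. The term with $j'=1$, $\sigma=1$ has vanishing trace by \eqref{eq:no-interference:3}. The term with $j'=1$, $\sigma=0$ has trace $f_1^{(0)}|_{\widehat\Sigma_1}$ by \eqref{eq:no-interference:2}. Hence
\begin{equation*}
(\tpprojectorS u)|_{\widehat\Sigma_1} = (\tpprojectorsimple u)|_{\widehat\Sigma_1} + \bigl(\edgeprojectorS{1}{0}u - \tpprojectorsimple u\bigr)|_{\widehat\Sigma_1} = (\edgeprojectorS{1}{0}u)|_{\widehat\Sigma_1}.
\end{equation*}

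For the normal derivative the argument is the same. By \eqref{eq:no-interference:1}, the normal derivative on $\widehat\Sigma_1$ vanishes for all terms with $j'\ne1$. By \eqref{eq:no-interference:2}, it vanishes for the term with $j'=1$, $\sigma=0$. By \eqref{eq:no-interference:3}, the term with $j'=1$, $\sigma=1$ contributes $f_1^{(1)}|_{\widehat\Sigma_1}$. Therefore
\begin{equation*}
(\mathbf n_1\cdot\nabla\tpprojectorS u)|_{\widehat\Sigma_1} = (\mathbf n_1\cdot\nabla \tpprojectorsimple u)|_{\widehat\Sigma_1} + \bigl(\edgeprojectorS{1}{1}u - \mathbf n_1\cdot\nabla\tpprojectorsimple u\bigr)|_{\widehat\Sigma_1} = (\edgeprojectorS{1}{1}u)|_{\widehat\Sigma_1}.
\end{equation*}

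The substantive work was already done in Lemma~\ref{lem:no-interference}. The only point that needs care is that \eqref{eq:no-interference:1} applies to the normal derivative $\mathbf n_{j'}\cdot\nabla$ of the edge $j'$ being examined, not to that of the edge $j$ where the correction originates. This is exactly the quantity required here once the roles of $j$ and $j'$ are swapped. The same bookkeeping on the remaining edges, using the reversed partitions in the extension operators, gives the result for $j=2,3,4$.
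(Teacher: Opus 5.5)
Your proposal is correct and follows the paper's proof: both apply the trace and normal derivative term by term to \eqref{eq:ModifProj} and use Lemma~\ref{lem:no-interference} to eliminate every correction term except the one belonging to the edge itself. You write out the term-by-term bookkeeping, including the swap of the roles of $j$ and $j'$, which the paper's one-line argument leaves implicit.
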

\begin{proof}
    Using~\eqref{eq:ModifProj} and Lemma~\ref{lem:no-interference}, we obtain
    \begin{equation*}
      (\tpprojectorS u)|_{\widehat \Sigma_j}
      =
      (\tpprojectorsimple u)|_{\widehat \Sigma_j}
      + 
      \left(\edgeprojectorS{j}{0} u-\tpprojectorsimple u\right)|_{\widehat \Sigma_j}
      =
      \edgeprojectorS{j}{0} u,
    \end{equation*}
    as well as
    \begin{equation*}
      (\mathbf{n}_j \cdot \nabla \tpprojectorS u)|_{\widehat \Sigma_j}
      =
      (\mathbf{n}_j \cdot \nabla \tpprojectorsimple u)|_{\widehat \Sigma_j}
      + 
      \left(\edgeprojectorS{j}{1} u-\mathbf{n}_j \cdot \nabla\tpprojectorsimple u\right)|_{\widehat \Sigma_j}
      =
      \edgeprojectorS{j}{1} u.
    \end{equation*}
    This finishes the proof.
\end{proof}

\begin{lemma}\label{lem:M2Dnormals}
    The operator $\tpprojectorS$ interpolates directional derivatives in direction $\mathbf{d}_j$ on the edges $\widehat\Sigma_j$, $j\in  \{1,2,3,4\}$ in the sense that
    \begin{equation*}
        \left(
        \mathbf{d}_j \cdot \nabla \tpprojectorS u
        \right)\big|_{\widehat \Sigma_j}
        =
        \Pi_{p-1,k,\mathbf Z,j}^{(2)}\left( \mathbf{d}_j \cdot \nabla u\right)\big|_{\widehat \Sigma_j}
        \quad\mbox{for all}\quad u\in H^4(\widehat\Omega).
    \end{equation*}
\end{lemma}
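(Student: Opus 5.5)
Since $\mathbf{d}_j$ is built from $\mathbf{n}_j$ and $\mathbf{t}_j$, the plan is to split the directional derivative into its normal and tangential parts on $\widehat\Sigma_j$. Both parts are already fixed by Lemma~\ref{lem:M2Dedge_cond}, so the identity reduces to the algebraic definition~\eqref{eq:edge_corrections2}. By \eqref{eq:partial d def}, we have
\begin{equation*}
\mathbf{d}_j\cdot\nabla = \frac{1}{\alpha_j}\left(\mathbf{n}_j\cdot\nabla + \beta_j\, \mathbf{t}_j\cdot\nabla\right),
\end{equation*}
where $\alpha_j,\beta_j$ depend only on the coordinate along the edge. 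We will treat $\widehat\Sigma_1$ explicitly; the other edges follow by symmetry.

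First, the tangential derivative $\mathbf{t}_j\cdot\nabla$ is a derivative along $\widehat\Sigma_j$. It therefore commutes with restriction to the edge, so Lemma~\ref{lem:M2Dedge_cond} gives
\begin{equation*}
(\mathbf{t}_j\cdot\nabla \tpprojectorS u)|_{\widehat\Sigma_j}=\mathbf{t}_j\cdot\nabla\bigl((\tpprojectorS u)|_{\widehat\Sigma_j}\bigr)=(\mathbf{t}_j\cdot\nabla \edgeprojectorS{j}{0}u)|_{\widehat\Sigma_j}.
\end{equation*}
This uses that $\edgeprojectorS{j}{0}u=\widehat\Pi^{(*)}_{p,k+1,\mathbf Z,j}u$ is a one-directional projector acting along the edge direction, so its edge trace equals $\Pi^{(*)}_{p,k+1,Z_j}$ applied to $u|_{\widehat\Sigma_j}$. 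Second, the normal part is given directly by Lemma~\ref{lem:M2Dedge_cond}:
\begin{equation*}
(\mathbf{n}_j\cdot\nabla\tpprojectorS u)|_{\widehat\Sigma_j}=(\edgeprojectorS{j}{1}u)|_{\widehat\Sigma_j}.
\end{equation*}

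Inserting both parts and using \eqref{eq:edge_corrections2}, we obtain on $\widehat\Sigma_j$
\begin{equation*}
\mathbf{d}_j\cdot\nabla\tpprojectorS u
= \frac{1}{\alpha_j}\Bigl(\alpha_j \widehat\Pi^{(2)}_{p-1,k,\mathbf Z,j}(\mathbf{d}_j\cdot\nabla u) - \beta_j\,\mathbf{t}_j\cdot\nabla\edgeprojectorS{j}{0}u + \beta_j\,\mathbf{t}_j\cdot\nabla\edgeprojectorS{j}{0}u\Bigr)
= \widehat\Pi^{(2)}_{p-1,k,\mathbf Z,j}(\mathbf{d}_j\cdot\nabla u).
\end{equation*}
The $\beta_j$-terms cancel exactly, and dividing by $\alpha_j$ is legitimate because $\alpha_j>0$ on $[0,1]$ by Definition~\ref{def:ASG1domainAsmpt}. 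For boundary edges, where $\alpha_j\equiv1$ and $\beta_j\equiv0$, the same computation holds trivially.

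There is no real obstacle here. The only points needing care are, first, that the traces are well defined for $u\in H^4(\widehat\Omega)$, since $\mathbf{d}_j\cdot\nabla u\in H^3$ and one-directional projection is applied to its restriction. Second, the sign conventions for $\mathbf{n}_j$ and $\mathbf{t}_j$ must be used consistently between \eqref{eq:partial d def} and \eqref{eq:edge_corrections2}; the explicit check on $\widehat\Sigma_1$, with $\mathbf{n}_1=(0,-1)$ and $\mathbf{t}_1=(-1,0)$, confirms they match.
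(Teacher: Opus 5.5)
Your proposal is correct and is essentially the paper's own proof. Both arguments split $\mathbf d_j\cdot\nabla$ into normal and tangential parts on $\widehat\Sigma_j$, read both parts off Lemma~\ref{lem:M2Dedge_cond}, and substitute \eqref{eq:edge_corrections2} so that the $\beta_j$-terms cancel. Your extra remarks — that tangential differentiation commutes with restriction and that $\alpha_j>0$ — only make explicit what the paper uses implicitly.
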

\begin{proof} 
    Recalling the definition of $\mathbf{d}_j$ in~\eqref{eq:partial d def} and Lemma~\ref{lem:M2Dedge_cond}, we have 
    \begin{equation*}    
    \begin{aligned}
        \left(
        \mathbf{d}_j \cdot \nabla  \tpprojectorS u
        \right)\big|_{\widehat \Sigma_j}
        &=
        \frac{1}{\alpha_j} 
        \left(
        \mathbf{n}_j \cdot \nabla  \tpprojectorS u
        \right)\big|_{\widehat \Sigma_j}
        +
        \frac{\beta_j}{\alpha_j}
        \left(
        \mathbf{t}_j \cdot \nabla  \tpprojectorS u
        \right)\big|_{\widehat \Sigma_j} \\
        &=
        \frac{1}{\alpha_j} 
        \left(
        \edgeprojectorS{j}{1} u
        \right)\big|_{\widehat \Sigma_j}
        +
        \frac{\beta_j}{\alpha_j}
        \left(
        \mathbf{t}_j \cdot \nabla  \edgeprojectorS{j}{0} u
        \right)\big|_{\widehat \Sigma_j}
        .
    \end{aligned}
    \end{equation*}
    Using~\eqref{eq:edge_corrections2}, we further have
    \begin{align*}
        &\left(
        \mathbf{d}_j \cdot \nabla  \tpprojectorS u
        \right)\big|_{\widehat \Sigma_j} \\
        &\quad=
        \frac{1}{\alpha_j} 
        \left(
        \alpha_j \,
        \widehat\Pi^{(2)}_{p-1,k,\mathbf Z,j}
            \left( \mathbf{d}_j \cdot \nabla u \right)
        - \beta_j \mathbf{t}_j \cdot \nabla  \edgeprojectorS{j}{0} u
        \right)\Big|_{\widehat \Sigma_j}
        +
        \frac{\beta_j}{\alpha_j}
        \left(
        \mathbf{t}_j \cdot \nabla  \edgeprojectorS{j}{0} u
        \right)\big|_{\widehat \Sigma_j}
        \\
        &\quad=
        \widehat\Pi^{(2)}_{p-1,k,\mathbf Z, j}
            \left( \mathbf{d}_j \cdot \nabla u \right)\big|_{\widehat\Sigma_j},
    \end{align*}
    which finishes the proof.
\end{proof}

The following lemma states that the operator $\tpprojectorS$ interpolates the function values and derivatives up to order two at the corners of $\widehat\Omega$.
\begin{lemma}\label{lem:M2Dcorner_cond}
     For all integers $s$ and $t$ with $0\leq s+t\leq 2$, all corners $\widehat{\mathbf x}_\ell$, $\ell\in\{1,2,3,4\}$, we have
    \begin{equation*}
        \partial_{1}^s\partial_{2}^t (\tpprojectorS u)(\widehat{\mathbf x}_\ell)
        =\partial_{1}^s\partial_{2}^t u(\widehat{\mathbf x}_\ell)
         \quad\mbox{for all}\quad u\in H^4(\widehat\Omega).
    \end{equation*}
\end{lemma}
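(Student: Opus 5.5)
The plan is to fix the corner $\widehat{\mathbf x}_1=(0,0)$ and obtain the other three corners by symmetry. We expand $\tpprojectorS u$ via~\eqref{eq:ModifProj} and check each term separately for the derivatives $\partial_1^s\partial_2^t$ with $s+t\le 2$.

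\emph{Base projector.} For the base term $\tpprojectorsimple u$, Lemma~\ref{lem:2D_edge_cond} with $r=2$ (note $p\ge 3=2r-1$) already gives interpolation of $u$, $\partial_1u$, $\partial_2u$ and $\partial_1\partial_2u$ at the corner. It does \emph{not} give the pure second derivatives $\partial_1^2$ and $\partial_2^2$. For these, Lemma~\ref{lem:2D_edge_cond_edge} with $s=0$ shows $(\tpprojectorsimple u)(\cdot,0)=\Pi^{(2)}_{p,k,Z_1}u(\cdot,0)$. Hence $\partial_1^2\tpprojectorsimple u(0,0)=\partial^2\Pi^{(2)}_{p,k,Z_1}u(\cdot,0)(0)$, and the analogous identity holds for $\partial_2^2$ via edge $\widehat\Sigma_4$. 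The task is then to show that the correction terms supply exactly the missing pure second derivatives and change nothing else.

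\emph{Corrections from the opposite edges.} The terms from $\widehat\Sigma_2$ and $\widehat\Sigma_3$ have the form $w(1,\xi_2)\,\phi(1-\xi_1)$ and $w(\xi_1,1)\,\phi(1-\xi_2)$. Every derivative $\partial_1^s\partial_2^t$ with $s+t\le2$ of such a term at $(0,0)$ contains a factor $\partial^{a}\phi(1)$ with $a\le2$. By Lemma~\ref{lem:estimPhi} (the condition at $1$ holds for all orders $0\le a\le 2$), this factor vanishes, so these terms contribute nothing.

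\emph{Corrections from the adjacent edges.} For $\widehat\Sigma_1$ we have $E_1^{(\sigma)}(f_1^{(\sigma)})=f_1^{(\sigma)}(\xi_1,0)\,\phi^{(\sigma)}_{p,Z_2}(\xi_2)$, whose $\partial_1^s\partial_2^t$-derivative at the origin equals $\partial^s f_1^{(\sigma)}(0,0)\,\delta_{\sigma,t}$. We use the facts established in the proof of Lemma~\ref{lem:no-interference}: $f_1^{(0)}$, $\partial_1 f_1^{(0)}$, $f_1^{(1)}$ and $\partial_1 f_1^{(1)}$ all vanish at $(0,0)$.
\begin{itemize}
\item For $\sigma=1$, the only possible contributions are $\partial^s f_1^{(1)}(0,0)$ with $s\le1$ (since $t=1$), and these vanish.
\item For $\sigma=0$, the only surviving contribution is to $\partial_1^2$, with value $\partial^2 f_1^{(0)}(0,0)$.
\end{itemize}
By~\eqref{eq:edge_corrections1} and Lemma~\ref{lem:2D_edge_cond_edge}, this value is $\partial^2\Pi^{(*)}_{p,k+1,Z_1}u(\cdot,0)(0)-\partial^2\Pi^{(2)}_{p,k,Z_1}u(\cdot,0)(0)$. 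Added to the base term, it gives $\partial^2\Pi^{(*)}_{p,k+1,Z_1}u(\cdot,0)(0)=\partial_1^2u(0,0)$ by Corollary~\ref{coro:star}~(a). The same argument for $\widehat\Sigma_4$, with the roles of $\xi_1$ and $\xi_2$ swapped, corrects $\partial_2^2$ and contributes nothing else.

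\emph{Expected obstacle.} The main point to get right is the bookkeeping that the $\sigma=0$ correction of one adjacent edge does not spoil the mixed derivative or the pure second derivative belonging to the other edge. For instance, $\partial_1^2$ of the $\widehat\Sigma_4$ term is $f_4^{(0)}(0,0)\,\partial^2\phi^{(0)}_{p,Z_1}(0)$. This is harmless precisely because $f_4^{(0)}$ vanishes at the corner. Likewise, $\partial_1\partial_2$ of the same term involves $\partial_2f_4^{(0)}(0,0)\,\partial\phi^{(0)}_{p,Z_1}(0)=0$. Hence every cross term vanishes either through the Kronecker-delta conditions of Lemma~\ref{lem:estimPhi} or through the vanishing of $f_j^{(\sigma)}$ and its tangential derivative at the endpoints. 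Summing all contributions then gives the claim.
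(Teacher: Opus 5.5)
Your proof is correct, but it is organized differently from the paper's. The paper never expands \eqref{eq:ModifProj} at the corner. It invokes Lemma~\ref{lem:M2Dedge_cond} to identify the trace of $\tpprojectorS u$ on $\widehat\Sigma_1$ with $\Pi^{(*)}_{p,k+1,Z_1}u(\cdot,0)$, and analogously on $\widehat\Sigma_4$. All pure derivatives $\partial_1^s$ and $\partial_2^t$ up to order two then follow in one line from Corollary~\ref{coro:star}~(a). Only the mixed derivative needs more: the normal-derivative identity of Lemma~\ref{lem:M2Dedge_cond}, the fact $\partial_1 f_1^{(1)}(0,0)=0$, and Lemma~\ref{lem:2D_edge_cond}.

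You instead evaluate each of the nine summands of \eqref{eq:ModifProj} at the corner. This means redoing, at a single point, the bookkeeping that Lemmas~\ref{lem:no-interference} and~\ref{lem:M2Dedge_cond} carry out along whole edges. It also draws on Lemma~\ref{lem:2D_edge_cond_edge} and on the full Kronecker-delta and vanishing conditions of Lemma~\ref{lem:estimPhi} up to order two. The paper's route is shorter.

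Yours is self-contained and never uses the normal-derivative identity of Lemma~\ref{lem:M2Dedge_cond}, which matters because that identity is sign-sensitive as the extensions are written. Since $\partial\phi^{(1)}_{p,Z_2}(0)=1$ and $\mathbf n_1=(0,-1)$ points outward, one gets $\mathbf n_1\cdot\nabla\edgeextension{1}{1}{f}\big|_{\widehat\Sigma_1}=-f$, not $+f$ as claimed in \eqref{eq:no-interference:3}. In your computation the $\sigma=1$ terms drop out of every corner derivative of order at most two regardless of that sign, because $f_j^{(1)}$ and its tangential derivative vanish at the corner.

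A minor inaccuracy in your last paragraph: the $\partial_1^2$-contribution of the $\widehat\Sigma_4$ term vanishes already because $\partial^2\phi^{(0)}_{p,Z_1}(0)=0$. The vanishing of $f_4^{(0)}(0,0)$ is a second reason, not the only one.

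One caveat applies equally to your argument and the paper's. ``By symmetry'' for the remaining three corners is not a literal symmetry of the operators. The one-dimensional Ritz projectors interpolate at the left endpoint by construction, but at the right endpoint only under the degree condition of Lemma~\ref{lem:1D_edge_cond}~(c). At a corner that is the right endpoint of an edge, the vanishing of $f_j^{(1)}$ and its tangential derivative requires right-endpoint interpolation by $\Pi^{(2)}_{p-1,k,Z}$, that is, $p-1\ge 3$. The standing assumption $3\le k+2\le p$ allows $p=3$, and in that case neither your argument nor the paper's (through Lemma~\ref{lem:no-interference}) covers those corners.
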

\begin{proof}
    Without loss of generality, we consider only the corner $\widehat{\mathbf x}_1=(0,0)$.
    For the case $t=0$, we make use of $\widehat{\mathbf x}_1\in\widehat \Sigma_1$ and Lemma~\ref{lem:M2Dedge_cond} and Corollary~\ref{coro:star} (a) to obtain
    \begin{equation*}
        \partial_{1}^s(\tpprojectorS u)(\widehat{\mathbf x}_1)
        =
        \partial_{1}^s(\edgeprojectorS{j}{0}u)(\widehat{\mathbf x}_1)
        =
        \left(\partial_{1}^s\Pi^{(*)}_{p,k+1,Z_1}u(\cdot,0)\right)(0)
        =
        \partial_{1}^s u(\widehat{\mathbf x}_1)
    \end{equation*}
    for $0\leq s \leq 2$. 
    The case $s=0$, $1\leq t \leq 2$ can be handled analogously by utilizing $\widehat{\mathbf x}_1\in\widehat \Sigma_4$.
    What remains is the case $s=1$, $t=1$. Using again $\widehat{\mathbf x}_1\in\widehat \Sigma_1$ and Lemma~\ref{lem:M2Dedge_cond}, we obtain
    \begin{equation*}
        (\partial_2 \tpprojectorS u)\big|_{\widehat \Sigma_1} 
        =-(\mathbf{n}_1 \cdot \nabla \tpprojectorS u)\big|_{\widehat \Sigma_1} 
        = -(\edgeprojectorS{1}{1} u)\big|_{\widehat \Sigma_1}.
    \end{equation*}
    The proof of Lemma~\ref{lem:no-interference}, in particular~\eqref{eq:proof-no-interference-4}, states that $\partial_1 f_1^{(1)}(0,0) = 0$, where $f_1^{(1)}=\edgeprojectorS{1}{1}  u + \partial_2 \tpprojectorsimple u$. Thus, we get using Lemma~\ref{lem:2D_edge_cond}
    \begin{equation*}
        (\partial_1\partial_2 \tpprojectorS u)(0,0)
        = -(\partial_1\edgeprojectorS{1}{1} u)(0,0) = (\partial_1\partial_2 \tpprojectorsimple u)(0,0) = (\partial_1\partial_2 u)(0,0),
    \end{equation*}    
    which is the desired result. 
    The proof for the other three corners is analogous.
\end{proof}

\subsection{Global compatibility of the multi-patch AS-$\boldsymbol{G^1}$ projector}\label{subsec:global-compat}

The global AS-$G^1$ projector $\tpprojectorSph$ is defined for all $u\in \mathcal H^4(\Omega)$ patch-wise via
\begin{equation}\label{eq:global_projector}
        (\tpprojectorSph u)|_{\Omega^{(i)}}
        := \left(\tpprojectorS ^{(i)}(u\circ \mathbf G^{(i)})\right) \circ (\mathbf G^{(i)})^{-1}
        \quad\mbox{for all}\quad i \in \mathcal I_{\Omega},
\end{equation}
where $\tpprojectorS ^{(i)}$ is the patch-local projector $\tpprojectorS $ from the last subsection using the vectors of breakpoints and the gluing data for $\Omega^{(i)}$. On the interfaces between the patches, the values of $\tpprojectorSph u$ are defined by continuous extension of its values on the neighboring patches. This requires that the limits from both sides agree, which the following lemma provides.
\begin{lemma}\label{lem:mapping}
    For all $u\in \mathcal H^4(\Omega)$, the function $\tpprojectorSph u$ can be continuously extended to the interfaces. For $\tpprojectorSph u$ being the so exteded operator, we have
    \begin{equation}\label{eq:lem:mapping:esimate}
        \tpprojectorSph u \in \mathcal V^1_{AS}
    \end{equation}
    and
    \begin{align}
        \label{eq:lem:mapping:cond-1}
        u\in C^2(\mathbf x)
        &\quad \Rightarrow \quad
        \tpprojectorSph u\in C^2(\mathbf x)
        && \mbox{for any} \quad \mathbf x = \mathbf{G}^{(i)}(\widehat{\mathbf{x}}_j),
        \\
        \label{eq:lem:mapping:cond-2}
        u\big|_{\Sigma} = c
        &\quad \Rightarrow \quad
        (\tpprojectorSph u)\big|_{\Sigma} = c
        && \mbox{for any} \quad \Sigma= \mathbf{G}^{(i)}(\widehat{\Sigma}_j),
        \\  
        \label{eq:lem:mapping:cond-3}
        \mathfrak{d}^{(i)}_j\cdot\nabla u\big|_{\Sigma} = 0
        &\quad \Rightarrow \quad
        \mathfrak{d}^{(i)}_j \cdot\nabla(\tpprojectorSph u)\big|_{\Sigma} = 0
        &&\mbox{for any} \quad \Sigma = \mathbf{G}^{(i)}(\widehat{\Sigma}_j),
    \end{align}
    where $(i,j)\in\mathcal{I}_\Omega\times\{1,2,3,4\}$ and $c\in\mathbb{R}$ is an arbitrary constant.
\end{lemma}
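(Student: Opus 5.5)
The plan is to verify the interface matching conditions patch by patch, using the edge-interpolation results of the previous subsection (Lemmas~\ref{lem:M2Dedge_cond} and~\ref{lem:M2Dnormals}). Then Lemma~\ref{lem:continuity_new_notaion}, read in reverse, gives membership in $\mathcal V^1_{AS}$.

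Fix an interface $(i,j)$ with neighbor $(\neighbori,\neighborj)=(\neighbori(i,j),\neighborj(i,j))$, and write $\widehat u^{(i)}:=u\circ\mathbf G^{(i)}$. By Lemma~\ref{lem:M2Dedge_cond}, the trace of $\tpprojectorS^{(i)}\widehat u^{(i)}$ on $\widehat\Sigma_j$ equals $\Pi^{(*)}_{p,k+1,Z^{(i)}_j}(\widehat u^{(i)}|_{\widehat\Sigma_j})$. Since $u\in H^2(\Omega)$ has a single-valued trace on the interface, $\widehat u^{(i)}|_{\widehat\Sigma_j}=\widehat u^{(\neighbori)}|_{\widehat\Sigma_{\neighborj}}\circ e_{(i,j)}$. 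The partitions match via~\eqref{eq:edge-Z-def}, and the univariate projector $\Pi^{(*)}$ commutes with the reflection $\xi\mapsto1-\xi$ once the partition is reversed. For $p\ge5$ this holds because it is an $H^3$-orthogonal projector with symmetric two-sided interpolation. For $p\in\{3,4\}$ I would have to check that the construction in Lemma~\ref{lem:low p projector} is reflection-equivariant; this is a possible snag, and if it fails I would symmetrize the definition. Hence the traces agree, and $\tpprojectorSph u$ extends continuously across interfaces.

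For the $C^1$ condition, Lemma~\ref{lem:M2Dnormals} gives $\mathbf d_j\cdot\nabla\tpprojectorS^{(i)}\widehat u^{(i)}=\Pi^{(2)}_{p-1,k,Z_j}(\mathbf d_j\cdot\nabla\widehat u^{(i)})$ on $\widehat\Sigma_j$. By the chain rule (as in Lemma~\ref{lem:continuity_new_notaion}) the argument equals $(\mathfrak d^{(i)}_j\cdot\nabla u)\circ\mathbf G^{(i)}$. Since $\mathfrak d^{(i)}_j=-\mathfrak d^{(\neighbori)}_{\neighborj}$ on the interface and $\nabla u$ has a well-defined trace for $u\in\mathcal H^4$, the two sides carry opposite data. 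The projector is linear and equivariant, so the crossing derivatives of the projections are also opposite. Matching traces give matching tangential derivatives, and $\mathfrak d$ is transversal, so together the full gradients agree and $\tpprojectorSph u\in C^1$. The membership conditions~\eqref{eq:pullback-traces} and~\eqref{eq:pullback-d-derivatives} hold because the ranges are $S_{p,k+1}$ and $S_{p-1,k}$. Patch-interior membership in $S_{p,k,\mathbf Z}$ is the preceding theorem. This gives~\eqref{eq:lem:mapping:esimate}.

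For~\eqref{eq:lem:mapping:cond-1}, Lemma~\ref{lem:M2Dcorner_cond} shows that each patch projection reproduces all parametric derivatives up to order two at the corners. By the chain rule and the invertibility of $\nabla\mathbf G^{(i)}$, the physical derivatives up to order two of $\tpprojectorSph u$ from every adjacent patch coincide with those of $u$. This requires the second derivatives of $\mathbf G^{(i)}$ to exist at the corner, which follows from $s$-regularity elementwise. If $u\in C^2(\mathbf x)$, these values are common to all patches, which gives $C^2$ at $\mathbf x$.

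For~\eqref{eq:lem:mapping:cond-2}, $\Pi^{(*)}$ reproduces constants, since constants lie in the spline space and the operator is a projection. For~\eqref{eq:lem:mapping:cond-3}, $\Pi^{(2)}_{p-1,k}$ maps $0$ to $0$ by linearity, so Lemma~\ref{lem:M2Dnormals} yields a vanishing $\mathbf d_j$-derivative, which maps back to $\mathfrak d^{(i)}_j\cdot\nabla\tpprojectorSph u=0$.

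I expect the main obstacle to be the orientation-equivariance of the univariate projectors, especially $\Pi^{(2)}_{p-1,k}$ and $\Pi^{(*)}$. $\Pi^{(2)}$ is defined via $H^2$-orthogonality plus left-endpoint interpolation of values and first derivatives. For $p-1\ge3$, Lemma~\ref{lem:1D_edge_cond}(c) also gives right-endpoint interpolation, and the operator should then be characterized symmetrically as $H^2$-orthogonal projection with both endpoint data fixed. I would prove this symmetric characterization first.
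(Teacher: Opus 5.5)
Your proof follows the paper's route step for step. Traces match via Lemma~\ref{lem:M2Dedge_cond}, crossing derivatives match via Lemma~\ref{lem:M2Dnormals} with $\mathfrak d^{(i)}_j=-\mathfrak d^{(\neighbori)}_{\neighborj}$ and transversality, the vertex statement comes from Lemma~\ref{lem:M2Dcorner_cond} and the chain rule, and the boundary implications come from linearity and reproduction of constants.

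You go beyond the paper on the orientation issue. On interfaces where $e_{(i,j)}$ reverses direction, the paper's ``analogously'' tacitly uses $R\,\Pi_{\overline Z}\,R=\Pi_Z$, with $(Rf)(\xi):=f(1-\xi)$, for both edge projectors. Your uniqueness argument establishes this correctly whenever the degree is at least $2r-1$: orthogonality plus left-end interpolation determines the Ritz projector, and $R\,\Pi_{\overline Z}\,R$ has both properties once $\Pi_{\overline Z}$ also interpolates at the right end. For $\Pi^{(*)}$ with $p\in\{3,4\}$, replacing it by $\tfrac12\bigl(\Pi^{(*)}_{p,k+1,Z}+R\,\Pi^{(*)}_{p,k+1,\overline Z}\,R\bigr)$ is legitimate. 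Corollary~\ref{coro:star} only asserts existence, and two-sided endpoint interpolation, the range and the error bound all survive averaging.

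Two small points. For $p\in\{3,4\}$ this operator is not a projection onto $S_{p,k+1,Z}$, since it passes through the coarser partition $\widetilde Z$. Reproduction of constants must therefore be read off the construction directly, which is immediate. Your justification of $\mathbf d^{(i)}_j\cdot\nabla\widehat u^{(i)}=-\mathbf d^{(\neighbori)}_{\neighborj}\cdot\nabla\widehat u^{(\neighbori)}\circ e_{(i,j)}$ through the single-valued trace of $\nabla u\in H^1(\Omega)$ is the right one. The paper's ``$u\in C^2(\Omega)$'' is not available for $u\in\mathcal H^4(\Omega)$. With these points your argument is complete for $p\ge4$.

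The case $p=3$ (so $k=1$) remains open in your proposal, and the paper has the same hole. There $\Pi^{(2)}_{p-1,k}=\Pi^{(2)}_{2,1,Z}$ has degree $2<2r-1$, so Lemma~\ref{lem:1D_edge_cond}~(c) does not apply. Since $\partial^2\Pi^{(2)}_{2,1,Z}u$ is the $L^2$-projection of $u''$ onto piecewise constants, this operator reproduces $u'(1)$ but in general not $u(1)$. Hence $R\,\Pi_{\overline Z}\,R-\Pi_Z$ is in general a nonzero constant. Averaging would destroy value interpolation at both ends, so the equivariance you invoke for crossing derivatives fails on flipped interfaces.

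More fundamentally, the proof of Lemma~\ref{lem:no-interference} uses exactly this missing right-end interpolation to get $f_1^{(1)}(1,0)=0$, and that fails for $p=3$. For example, $u=\xi_2w(\xi_1)$ with $w=-\alpha_1\xi_1^3$ on a uniform mesh gives $f_1^{(1)}(1,0)=\alpha_1(1)\,h^2/2$. So Lemmas~\ref{lem:M2Dedge_cond} and~\ref{lem:M2Dnormals}, which both you and the paper use as black boxes, are not justified there. This gap is inherited from the paper rather than introduced by your route. Closing it needs a two-sided interpolating replacement for $\Pi^{(2)}_{2,1,Z}$ in the spirit of Lemma~\ref{lem:low p projector}, or the restriction $p\ge4$.
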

\begin{proof} 
    Let $u\in \mathcal H^4(\Omega)$ be arbitrary but fixed and observe that the Sobolev embedding theorem guarantees $u\in C^1(\Omega)$.
    For each $i \in \mathcal I_\Omega$, we denote by $\widehat u^{(i)} := u\circ \mathbf G^{(i)}$ the pull-back of $u$ to the parameter domain
    and by $\widehat u_h^{(i)} := \tpprojectorS ^{(i)} \widehat u^{(i)}\in S_{p,k,\mathbf Z^{(i)}}$
    its approximation. Observe that the construction~\eqref{eq:global_projector} yields
    $\widehat u_h^{(i)} = (\tpprojectorSph u) \circ \mathbf G^{(i)}$
    in the interior of the parameter domain.
    In order to establish that $\tpprojectorSph u$ can be continuously extended to the interfaces,
    consider one arbitrary interface $\Sigma=\mathbf G^{(i)}(\widehat\Sigma_j)=\mathbf G^{(\neighbori(i,j))}(\widehat\Sigma_{\neighborj(i,j)})$,
    where $\neighbori:=\neighbori(i,j)$ and $\neighborj:=\neighborj(i,j)$ to simplify the notation. 
    Using the pull-back principle~\eqref{eq:pullback-traces}, Lemma~\ref{lem:M2Dedge_cond}, \eqref{eq:one-directional-projectors}, and~\eqref{eq:edge-Z-def}, we obtain
    \begin{equation*}
    \widehat u_h^{(i)}|_{\widehat \Sigma_j}  
    =
    (\widehat\Pi_{AS}^{(i)} \widehat u^{(i)})|_{\widehat \Sigma_j} 
    =
    \widehat \Pi^{(*)}_{p,k+1,Z^{(i)}_j} (\widehat u^{(i)}|_{\widehat \Sigma_j} ) \in S_{p,k+1, Z^{(i)}_j}
    \end{equation*}
    and analogously
    $\widehat u_h^{(\neighbori)}\circ e_{(i,j)}
    =
    \widehat \Pi^{(*)}_{p,k+1,Z^{(i)}_j} (\widehat u^{(\neighbori)}\circ e_{(i,j)})$.
    Since $u\in C^0(\Omega)$ guarantees $\widehat u^{(i)}|_{\widehat \Sigma_j}=\widehat u^{(\neighbori)}\circ e_{(i,j)}$, we immediately obtain
    \begin{equation}\label{eq:proof:lem:mapping:1}
    \widehat u_h^{(i)}|_{\widehat \Sigma_j} 
    =
    \widehat u_h^{(\neighbori)}\circ e_{(i,j)}
    \in S_{p,k+1,Z^{(i)}_j}.
    \end{equation}
    This shows that there is a continuous extension of $\tpprojectorSph u$ to the interface $\Sigma$.
    In the following, $\tpprojectorSph u$ is the so extended function.
    From~\eqref{eq:proof:lem:mapping:1}, the first condition of~\eqref{eq:V1 AS}
    and, since $\widehat \Pi^{(*)}_{p,k+1,Z^{(i)}_j}c=c$ for all constants $c$, also~\eqref{eq:lem:mapping:cond-2} follows.

    In order to establish also the continuity of the derivatives, we observe that~\eqref{eq:directions}, the pull-back principle~\eqref{eq:pullback-d-derivatives}, Lemma~\ref{lem:M2Dnormals}, \eqref{eq:one-directional-projectors}, and~\eqref{eq:edge-Z-def} yield
    \begin{equation}\label{eq:proof:lem:mapping:4}
    \begin{aligned}
    \left((\mathfrak d^{(i)}_j\cdot\nabla\tpprojectorSph u)|_{\overline{\Omega^{(i)}}}\right)\circ \mathbf G^{(i)}|_{\widehat \Sigma_j}  
    &=
    (\mathbf d_j^{(i)}\cdot\nabla \widehat\Pi_{AS}^{(i)} \widehat u^{(i)})|_{\widehat \Sigma_j}
    \\&= 
    \Pi^{(2)}_{p-1,k,Z^{(i)}_j} (\mathbf d_j^{(i)}\cdot\nabla \widehat u^{(i)}|_{\widehat \Sigma_j})
    \in 
    S_{p-1,k,Z^{(i)}_j},
    \end{aligned}
    \end{equation}
    where $w|_{\overline{\Omega^{(i)}}}$ denotes the continuous extension of a function $w$ from $\Omega^{(i)}$ to its closure.
    Since $\mathfrak d^{(i)}_j = -\mathfrak d^{(\neighbori)}_{\neighborj}$, we also have
    \begin{equation*}
    \left((\mathfrak d^{(i)}_j \cdot\nabla\tpprojectorSph u)|_{\overline{\Omega^{(\neighbori)}}}\right)\circ \mathbf G^{(\neighbori)} \circ e_{(i,j)}
    = \Pi^{(2)}_{p-1,k,Z^{(i)}_j} (-\mathbf d^{(\neighbori)}_{\neighborj}\cdot\nabla \widehat u^{(\neighbori)}\circ e_{(i,j)}).
    \end{equation*}
    Since $u\in C^2(\Omega)$ guarantees 
    $\mathbf d_j^{(i)}\cdot\nabla \widehat u^{(i)}|_{\widehat \Sigma_j}=-\mathbf d^{(\neighbori)}_{\neighborj}\cdot\nabla \widehat u^{(\neighbori)}\circ e_{(i,j)}$,
    we obtain
    \begin{equation*}
    \left((\mathfrak d^{(i)}_j\cdot\nabla\tpprojectorSph u)|_{\overline{\Omega^{(i)}}}\right)\circ \mathbf G^{(i)}|_{\widehat \Sigma_j}  
    =
    \left((\mathfrak d^{(i)}_j \cdot\nabla\tpprojectorSph u)|_{\overline{\Omega^{(\neighbori)}}}\right)\circ \mathbf G^{(\neighbori)} \circ e_{(i,j)}
    \in S_{p-1,k,Z^{(i)}_j}.
    \end{equation*}
    This shows the second condition of \eqref{eq:V1 AS} and that $\mathfrak d^{(i)}_j\cdot\nabla \Pi_{AS}  u$ is continuous across interfaces. Since also $\Pi_{AS} u$ is continuous across interfaces and since Definition~\ref{def:ASG1domainAsmpt} guarantees that $\mathfrak d^{(i)}_j$ does not coincide with the tangential direction of $\Sigma$, this guarantees that $\Pi_{AS} u\in C^1(\Omega)$, which finishes the proof of~\eqref{eq:lem:mapping:esimate}.
    In order to show~\eqref{eq:lem:mapping:cond-3}, assume $\mathfrak d^{(i)}_j\cdot\nabla u|_{\Sigma} = 0$. Using Lemma~\ref{lem:continuity_new_notaion}, we know that the pull-back $\mathbf d^{(i)}_j\cdot\nabla (u\circ \mathbf G^{(i)})|_{\widehat \Sigma_j}$ vanishes. Then, \eqref{eq:proof:lem:mapping:4} and $\widehat \Pi^{(2)}_{p-1,k,Z^{(i)}_j}0=0$ gives~\eqref{eq:lem:mapping:cond-3}.
    
    To finally show~\eqref{eq:lem:mapping:cond-1}, assume $u \in C^2(\mathbf{x})$ for some vertex $\mathbf{x} = \mathbf{G}^{(i)}(\widehat {\mathbf x}_j)$. Lemma~\ref{lem:M2Dcorner_cond} ensures that the local projector interpolates the values and derivatives up to second order of the pullback $u \circ \mathbf{G}^{(i)}$ at the corners of the parameter domain. Since the geometry parameterization $\mathbf{G}^{(i)}$ is $C^2$ smooth there, the chain rule implies that the physical derivatives of $\tpprojectorSph u$ at $\mathbf{x}$ are uniquely determined and match those of $u$.
    This completes the proof.
\end{proof}

In order to give estimates of the quasi interpolation error, we use the following lemma.
\begin{lemma}\label{lem:norm:equivalence}
    For all $u\in H^s(\Omega^{(i)})$, $s\ge 0$ integer, such that $\mathbf G^{(i)}|_E\in C^s(E)$, for all $E\in\elemSet_{\mathbf{Z}}$, one has $u\circ \mathbf G^{(i)} \in \mathcal H^s(\mathbf{Z};\widehat\Omega)$ and
    \begin{equation*}
        \|u\|_{H^s(\Omega^{(i)})}
        \le C(\mathbf G,s)
        \|u\circ \mathbf G^{(i)}\|_{\mathcal H^s(\mathbf{Z};\widehat\Omega)},
        \quad
        \|u\circ \mathbf G^{(i)}\|_{\mathcal H^s(\mathbf{Z};\widehat\Omega)}
        \le C(\mathbf G,s)
        \|u\|_{H^s(\Omega^{(i)})}.
    \end{equation*}
    Moreover, for $s=1$, we have an equivalence of $H^1$-seminorms.
\end{lemma}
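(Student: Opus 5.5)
The plan is to apply the chain rule and the transformation formula for integrals element by element, and to collect the resulting derivatives of $\mathbf G^{(i)}$ and of its inverse into the constant $C(\mathbf G,s)$.

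Fix an element $E\in\elemSet_{\mathbf Z}$. On $E$ the map $\mathbf G^{(i)}$ is $C^s$, and by 2-regularity it is a bi-Lipschitz diffeomorphism onto $\mathbf G^{(i)}(E)$. For $\widehat u := u\circ\mathbf G^{(i)}$, the multivariate chain rule (Fa\`a di Bruno) gives, for each multi-index $\boldsymbol\gamma$ with $|\boldsymbol\gamma|=m\le s$,
\begin{equation*}
\partial^{\boldsymbol\gamma}\widehat u = \sum_{1\le|\boldsymbol\kappa|\le m} P_{\boldsymbol\gamma,\boldsymbol\kappa}\,\bigl((\partial^{\boldsymbol\kappa}u)\circ\mathbf G^{(i)}\bigr).
\end{equation*}
Here each $P_{\boldsymbol\gamma,\boldsymbol\kappa}$ is a polynomial in the derivatives of $\mathbf G^{(i)}$ of order at most $m-|\boldsymbol\kappa|+1\le s$. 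These derivatives are bounded on $E$ by $\|\mathbf G^{(i)}\|_{W^{s,\infty}}$. Squaring, integrating over $E$, and substituting $\mathbf x=\mathbf G^{(i)}(\boldsymbol\xi)$ with $\mathrm d\boldsymbol\xi = |\det\nabla\mathbf G^{(i)}|^{-1}\mathrm d\mathbf x$ bounds $|\widehat u|_{H^m(E)}$ by $C(\mathbf G,s)\sum_{\ell\le m}|u|_{H^\ell(\mathbf G^{(i)}(E))}$. Summing over the elements, and using that the images $\mathbf G^{(i)}(E)$ partition $\Omega^{(i)}$ up to null sets, gives the second inequality. Membership $\widehat u\in\mathcal H^s(\mathbf Z;\widehat\Omega)$ follows from the same computation, first for smooth $u$ and then for general $u$ by density.

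For the first inequality we apply the same argument to $u=\widehat u\circ(\mathbf G^{(i)})^{-1}$ on each $\mathbf G^{(i)}(E)$. This requires bounding the derivatives of the inverse map up to order $s$. We write $\nabla(\mathbf G^{(i)})^{-1} = (\nabla\mathbf G^{(i)})^{-1}\circ(\mathbf G^{(i)})^{-1}$, where $(\nabla\mathbf G^{(i)})^{-1} = \operatorname{adj}(\nabla\mathbf G^{(i)})/\det\nabla\mathbf G^{(i)}$. Differentiating this identity inductively expresses the $\ell$-th derivative of the inverse as a polynomial in the derivatives of $\mathbf G^{(i)}$ up to order $\ell$ and in $(\det\nabla\mathbf G^{(i)})^{-1}$. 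Every such quantity appears among the parameters in Definition~\ref{def:constants}. Because $\mathbf G^{(i)}$ is globally $C^1$, patch-level $H^1$ membership follows directly. Higher patch-level regularity, however, is not implied by the elementwise bounds alone: a broken function could jump across element interfaces. This step needs care, and we resolve it by reading the hypothesis $u\in H^s(\Omega^{(i)})$ as given. The first inequality is then just the elementwise norm equivalence summed over the elements, since the $H^s(\Omega^{(i)})$ norm of a function that is globally $H^s$ equals the sum of its elementwise norms.

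For $s=1$ the chain rule takes the simpler form $\nabla\widehat u = (\nabla\mathbf G^{(i)})^{T}(\nabla u\circ\mathbf G^{(i)})$. It contains no lower-order terms, so the bounds above involve only first derivatives, which gives the seminorm equivalence $|u|_{H^1}\simeq|\widehat u|_{H^1}$. The main obstacle is the bookkeeping for the higher derivatives of the inverse map. I will handle it by induction on the order, using the identity $\nabla(\mathbf G^{-1}) = (\nabla\mathbf G)^{-1}\circ\mathbf G^{-1}$, which keeps the dependence of the constant on $s$ and on the quantities in Definition~\ref{def:constants} explicit.
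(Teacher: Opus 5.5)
Your proposal is correct and takes the same approach as the paper. The paper's proof is a single line invoking the chain rule, the substitution rule and H\"older's inequality (with a reference to the literature), which is your elementwise Fa\`a di Bruno plus change-of-variables argument with $L^\infty$ bounds on the derivatives of $\mathbf G^{(i)}$ and on $(\det\nabla\mathbf G^{(i)})^{-1}$; you also spell out what the paper leaves implicit, namely the bounds on the higher derivatives of the inverse map via $\mathrm{adj}(\nabla\mathbf G^{(i)})/\det\nabla\mathbf G^{(i)}$ and the density argument for membership in $\mathcal H^s(\mathbf Z;\widehat\Omega)$.
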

\begin{proof}
    This estimate follows immediately from the definition of the involved norms, standard chain and substitution rules, and Hölder's inequality, cf. \citeOneRef{Bazilevs2006}.
\end{proof}

\subsection{Proof of the main theorem}\label{subsec:proof}

Now, we are able to prove Theorem~\ref{thrm:main}.
\begin{proof}
    The projector $\tpprojectorSph$ is as defined in~\eqref{eq:global_projector} and continuously extended to the interfaces between the patches as in Lemma~\ref{lem:mapping}. That lemma guarantees
    $\tpprojectorSph u \in \mathcal V_{AS}^1$.
    The combination of Theorem~\ref{thm:M2D24} and Lemma~\ref{lem:norm:equivalence} gives 
    \begin{equation*}
        \|u-\tpprojectorSph u\|_{H^t(\Omega^{(i)})}^2
        \le
        C(\mathbf G,s)
        h_{\mathbf Z^{(i)}}^{2(s-t)}
        \|u\|_{H^s(\Omega^{(i)})}^2.
    \end{equation*}
    The error estimate of the form~\eqref{eq:thrm:main:estimate} follows by taking the sum over all patches and the observation $h_{\mathbf Z^{(i)}}\le C(\mathbf G,s) \physH$.
    The remaining statements~\eqref{eq:thrm:main:cond-1}, \eqref{eq:thrm:main:cond-2}, and~\eqref{eq:thrm:main:cond-3} follow immediately from Lemma~\ref{lem:mapping}.
\end{proof}

\section{Conclusions}\label{sec:conclusions}

We have proven $p$-robust approximation error estimates for $C^1$-smooth isogeometric spaces over planar analysis-suitable $G^1$ multi-patch domains. This extends the approximation error estimates in \citeRefDetail{Theorem~1}{Kapl2021} from bilinear to general AS-$G^1$ parameterizations. Moreover, it verifies the numerical results obtained, e.g., in \citeRefDetail{Section~8.2}{CST16}, \citeRefDetail{Section~4.2}{kapl2018construction}, \citeRefDetail{Section~5}{kapl2019isogeometric} and \citeRefDetail{Section~6}{kapl2019smai}. Since the dimension of the isogeometric space $\dim(\mathcal{V}^{1}_{AS} )$ is of order $O(h^{-2})$ for $h\rightarrow 0$, the $n$-width theory, cf. \citeOneRef{floater2019optimal}, implies that the estimates for maximal Sobolev order $s=p+1$ are asymptotically optimal. Thus, the estimates yield optimal convergence rates for conforming discretizations of fourth-order problems, such as the biharmonic equation and Kirchhoff--Love plate formulations. Moreover, bounds that are independent of $p$ are needed in the convergence analysis of iterative solvers when robustness with respect to both mesh size and polynomial degree is desired. This includes multigrid methods and IETI-DP domain decomposition.

The constructed projection operator can be extended to multi-patch surfaces, which allows us to study approximation properties for isogeometric discretizations over surfaces, where all patch neighborhoods are planarizable. It is also possible to modify the patch-wise projectors to extend the error estimates to analyze adaptive methods, e.g., based on hierarchical or truncated hierarchical splines, which will result in a loss of $p$-robustness. Note that the presented analysis requires that the parameterization is AS-$G^1$, i.e., that linear gluing data exists. For  multi-patch parameterizations that are not AS-$G^1$, the $C^1$-smooth subspace of the $C^0$ multi-patch space lacks approximation power. This reduction leads to a reduced convergence order, or in the worst case to a complete $C^1$-locking that prevents convergence altogether, cf. \citeRefDetail{Section~8.3}{CST16} and \citeRefDetail{Section~4.2}{kapl2018construction}. An in-depth analysis of the dependence of the approximation estimates on the patch parameterizations is of interest for future studies. One could formally establish lower-order convergence rates or prove suboptimality for non-AS-$G^1$ configurations. Further potential extensions include the study of approximation error bounds for discretizations of higher-order continuity over so-called bilinear-like multi-patch parameterizations, and for $C^1$-smooth spaces over volumetric domains.

\section*{Acknowledgments}
This research was funded in whole or in part by the Austrian Science Fund (FWF) 10.55776/P37177 and 10.55776/P33956. This support is gratefully acknowledged.

\appendix
\section{Computing the gluing data}\label{ap:comp-gluing}

We consider a single interface $\Sigma$ shared by two patches $\Omega^{(1)}$ and $\Omega^{(2)}$, as in Figure~\ref{fig:geometric}, where $i=1$ and $\neighbori(1,4)=2$. We define the pullbacks of the isogeometric function $u$ as $\hat{u}^{(1)} = u \circ \mathbf{G}^{(1)}$ and $\hat{u}^{(2)} = u \circ \mathbf{G}^{(2)}$, then the graph of the function is defined as
    \begin{equation*}
    \begin{bmatrix} \mathbf{G}^{(i)} \\ \widehat{u}_h^{(i)} \end{bmatrix} : \widehat{\Omega} \to \mathbb{R}^3.
\end{equation*}
Let $\xi$ denote the parameter along the interface. Thus, the $C^0$-smoothness of the geometry and isogeometric function can be written as
\begin{equation}\nonumber
\mathbf G^{(1)}(0,\xi)=\mathbf G^{(2)}(\xi,0)
\quad \mbox{and}\quad
\widehat{u}_h^{(1)}(0,\xi)=\widehat{u}_h^{(2)}(\xi,0).
\end{equation}
Assuming a $C^0$-matching and $2$-regular multi-patch parameterization,
$C^1$-continuity of the function $u$ is equivalent to $G^1$-continuity of the graph, cf. \citeOneRef{groisser2015matched}. This requires the directional derivative along the interface and the two cross-directional derivatives to be linearly dependent:
\begin{equation*}
    \det \begin{bmatrix} 
        \partial_2 \mathbf{G}^{(1)}(0,\xi) & \partial_1 \mathbf{G}^{(1)}(0,\xi) & \partial_2 \mathbf{G}^{(2)}(\xi,0) \\ 
        \partial_2 \widehat{u}_h^{(1)}(0,\xi)  & \partial_1 \widehat{u}_h^{(1)}(0,\xi)  & \partial_2 \widehat{u}_h^{(2)}(\xi,0)
    \end{bmatrix} = 0.
\end{equation*}
By expanding this determinant along the third row, we obtain the expression
\begin{equation*} 
D_3(\xi)\,\partial_2 \widehat{u}_h^{(1)}(0,\xi) + 
D_2(\xi)\,\partial_1 \widehat{u}_h^{(1)}(0,\xi) 
+ D_1(\xi)\,\partial_2 \widehat{u}_h^{(2)}(\xi,0) 
=0,
\end{equation*}
where
\begin{equation*}
\begin{aligned}
       D_1(\xi) &:= \det \begin{bmatrix} \partial_1 \mathbf{G}^{(1)}(0,\xi) & \partial_2 \mathbf{G}^{(1)}(0,\xi) \end{bmatrix} >0\\
       D_2(\xi) &:= \det \begin{bmatrix} \partial_1 \mathbf{G}^{(2)}(\xi,0) & \partial_2 \mathbf{G}^{(2)}(\xi,0) \end{bmatrix} >0\\
       D_3(\xi) &:= -\det \begin{bmatrix} \partial_1 \mathbf{G}^{(1)}(0,\xi) & \partial_2 \mathbf{G}^{(2)}(\xi,0)\end{bmatrix}.
\end{aligned}
\end{equation*}
Here, we made use of the fact that the $C^0$-smoothness condition immediately implies
$\partial_2 \mathbf G^{(1)}(0,\xi)=\partial_1 \mathbf G^{(2)}(\xi,0)$. The parameterization is analysis-suitable $G^1$ if and only if there exists a function $\gamma:[0,1]\rightarrow \mathbb{R}^+$ and linear functions $\alpha_1$, $\alpha_2$, $\beta_1$, $\beta_2$, called \emph{gluing data}, such that
\[
 D_1 = \alpha_1 \gamma, \quad D_2 = \alpha_2 \gamma, \quad \mbox{and}\quad D_3 = (\alpha_1\beta_2+\alpha_2\beta_1) \gamma.
\]
In order to compute normalized gluing data from a given AS-$G^1$ parameterization, we solve the following minimization problems:
\begin{equation*}
    \max_{\xi \in [0,1]}\max\{\alpha_1(\xi),\alpha_2(\xi)\}
    \to \min \quad \mbox{subject to} \quad \begin{cases}D_1\alpha_2=D_2\alpha_1 \quad \mbox{and} \\\min_{\xi \in [0,1]}\min\{\alpha_1(\xi),\alpha_2(\xi)\}=1
    \end{cases}
\end{equation*}
and
\begin{equation*}
    \max_{\xi \in [0,1]}\max\{|\beta_1(\xi)|,|\beta_2(\xi)|\}\to \min \quad \mbox{subject to} \quad D_1\beta_2+D_2\beta_1=D_3.
\end{equation*}
If $D_1=cD_2$ with $c\in \mathbb{R}^+$, then the minimization yields constant functions $\alpha_1$ and $\alpha_2$. In that case or if $\beta_1$ and $\beta_2$ are constant, then the space $\mathcal{V}^1_{AS}$ in~\eqref{eq:V1 AS} is a proper subspace of $\mathcal{V}^1_{h}$ in~\eqref{eq:Vh1}. Since these two spaces have the same approximation power and similar dimension, either of them can be realized numerically as convenient. Furthermore, in an implementation it is usually not feasible to satisfy the linear constraints of the minimization problems exactly. Thus, a least squares approach can be employed, which yields constant $\alpha_1$ and $\alpha_2$, if they satisfy the constraints within a prescribed tolerance.

If $\mathbf{G}^{(1)}$ and $\mathbf{G}^{(2)}$ are bilinear, one obtains non-normalized gluing data with $\gamma\equiv 1$, which can be normalized directly. Thus, in case of bilinear patches the parameterization is always AS-$G^1$. If the given parameterization is not AS-$G^1$, one can still compute linear gluing data, which can be used for an AS-$G^1$ reparameterization, with the exception of a few degenerate cases, cf. \citeOneRef{kapl2018construction}. Linear gluing data functions are constructed via interpolation:
\begin{equation*}
\begin{aligned}
\alpha_1(\xi) &= D_1(0) (1 - \xi) + D_1(1) \xi, \\
\alpha_2(\xi) &= D_2(0) (1 - \xi) + D_2(1) \xi, \\
\beta_1(\xi) &= b_1^0 (1 - \xi) + b^1_1 \xi, \qquad\qquad b_1^0, b^1_1 \in \mathbb{R}, \\
\beta_2(\xi) &= b_2^0 (1 - \xi) + b^1_2 \xi, \qquad\qquad b_2^0, b^1_2 \in \mathbb{R},
\end{aligned}
\end{equation*}
where the coefficients $b^0_1$, $b^1_1$, $b^0_2$, $b^1_2$ are determined by solving the minimization problem
\begin{equation*}
\int_0^1 \big| D_3 - \underbrace{(\alpha_1\beta_2 + \alpha_2\beta_1)}_{\beta} \big|^2 \mathrm d \xi + \lambda_{\beta} \left( \int_0^1 |\beta_1|^2 \mathrm d \xi + \int_0^1 |\beta_2|^2 \mathrm d \xi \right) \to \min_{b^0_1, b^1_1, b^0_2, b^1_2},
\end{equation*}
with respect to the linear constraints $\beta(0)=D_3(0)$, and $\beta(1)=D_3(1)$. Note that this computation does not reproduce the correct gluing data in case of an AS-$G^1$ parameterization.

While, in general, the gluing data depends on the parameterization in a non-trivial way, in case of bilinear patches or AS-$G^1$ reparameterizations, the constant $C(\mathbf{G},s)$ in Definition~\ref{def:constants} is easily computable and depends only on $\underline{c}$ in~\eqref{eq:lower-bound-Jacobi-det1} and on the $W^{1,\infty}$-norm of the patch parameterizations.

\bibliographystyle{ws-m3as}
\bibliography{references}

\end{document}